\chardef\bslash=`\\ 
\newtheorem{thm}{Theorem}[section]
\newtheorem{cor}[thm]{Corollary}
\newtheorem{lem}[thm]{Lemma}
\newtheorem{prop}[thm]{Proposition}
\newtheorem{defn}[thm]{Definition}
\theoremstyle{remark}
\newcommand{\B}{\mathcal{B}}
\newcommand{\X}{\mathcal{X}}
\newcommand{\C}{\mathcal{C}}
\newcommand{\D}{\mathcal{D}}
\newcommand{\LS}{\mathcal{L}}
\newcommand{\US}{\mathfrak{A}}
\def\f#1{\mathfrak{#1}}
\newcommand{\fS}{\f{S}}
\def\MN(#1){ M_{#1}(\mathbb{N})}
\def\MNR(#1,#2){ M_{#1}(\mathbb{N})_{#2}}
\def\MNS(#1){ M_{#1}(\mathbb{N})^{\pm}}
\newcommand{\ZZ}{\mathbb{Z}}
\newcommand{\CC}{\mathbb{C}}
\newcommand{\ZG}{{\ZZ}_2}
\newcommand{\NN}{\mathbb{N}}
\newcommand{\QQ}{\mathbb{Q}}
\def\MZ(#1){ M_{#1}(\ZG)}
\def\NZ(#1){ (\NN|\ZG)^{#1}}
\def\MNZ(#1,#2){ M_{#1}(\NN | \ZG)_{#2}}
\def\MNZN(#1){ M_{#1}(\NN | \ZG)}
\def\CMNZ(#1,#2,#3){\Lambda(#1,#2|#3)}
\def\CMN(#1,#2){\Lambda(#1,#2)}
\def\MNZNS(#1){ \MNZN(#1)^{\pm}}
\def\SE#1{#1^{0}}
\def\SO#1{#1^{1}}
\def\SEE#1{#1^{0}}
\def\SOE#1{#1^{1}}
\def\SUP#1{\SE{#1}|\SO{#1}}
\def\bs#1{\boldsymbol{#1}}
\def\Qqs(#1,#2){ \mathcal{Q}(#1,#2) }
\newcommand{\SerA}{\f{H}^c_r}
\newcommand{\SerZ}{\f{H}^c_{r, \ZZ}}
\newcommand{\HCR}{\mathcal{H}^c_{r,R}}
\newcommand{\QqscR}{\mathcal{Q}_q(n,r; R)}
\newcommand{\QqscZ}{\mathcal{Q}_1(n,r; \ZZ)}
\newcommand{\Qnr}{\mathcal{Q}(n,r)}
\newcommand{\QnrZ}{\mathcal{Q}(n,r)_{\ZZ}}
\newcommand{\Qqsn}{\mathcal{Q}(n)}
\newcommand{\USn}{\US[n]}
\def\ol#1{\overline{#1}}
\newcommand{\ep}{\epsilon}
\def\qn{\mathfrak{q}_n}
\def\Uqn{U(\qn)}
\def\VZ{V_{\ZZ}}
\def\UqnZ{U(\qn)_{\ZZ}}
\def\Uqqn{U_q(\qn)}
\newcommand{\dij}{\delta_{i,j}}
\def\ABJR(#1,#2,#3,#4){({#1}|{#2})[\bs{#3}, #4]}
\def\ABJRS(#1,#2,#3,#4){({#1}|{#2})[#3, #4]}
\def\ABJS(#1,#2,#3){({#1}|{#2})[#3]}
\def\AJRS(#1,#2,#3){{#1}[{#2}, #3]}
\def\AJS(#1,#2){{#1}[#2]}
\def\AJRS(#1,#2,#3){{#1}[{#2}, #3]}
\def\ABSUM#1{\widehat{#1}}
\def\MNZADD(#1, #2){\SE{#1} + {#2}|\SO{#1}}
\newcommand{\tspan}{\mbox{span}}
\newcommand{\Tr}{\mathscr{D}}
\newcommand{\fsP}{\mathscr{P}}
\newcommand{\fsG}{\mathscr{G}}
\newcommand{\End}{\mathrm{End}}
\newcommand{\co}{\mbox{co}}
\newcommand{\ro}{\mbox{ro}}
\newcommand{\diag}{\mbox{diag}}
\newcommand{\where}{\ \bs{\big|} \ }
\def\snorm#1{|#1|}
\def\SAIJ(#1,#2){ S_{({#1}, {#2})} }
\def\SDIJ(#1,#2){ S_{({#1}, {#2})} }
\begin{document}
\title{The integral Schur-Weyl-Sergeev duality}
\author{Haixia Gu, Zhenhua Li$^\dagger$,  Yanan Lin}
\address{Haixia Gu, School of Science, Huzhou University, Huzhou 313000, China}
\email{ghx@zjhu.edu.cn}
\address{Zhenhua Li, School of Mathematical Sciences, Xiamen University, Xiamen 361005, China}
\email{zhenhua@stu.xmu.edu.cn}
\address{Yanan Lin, School of Mathematical Sciences, Xiamen University, Xiamen 361005, China}
\email{ynlin@xmu.edu.cn}
\keywords{ queer Lie superalgebra, Sergeev superalgebra,
	realization, queer Schur superalgebra, Schur-Weyl-Sergeev duality
}
\date{}
\subjclass[2010]{17B37, 17A70, 20G42, 20C08}
\thanks{$^\dagger$Corresponding author.}

\begin{abstract}
	Degenerating the quantum queer Schur superalgebra ${\mathcal{Q}_q(n,r; R)}$ to the case $q=1$, the queer Schur superalgebra ${\mathcal{Q}(n,r)}$ is obtained. In this article, we reconstruct the universal enveloping algebra ${U({\mathfrak{q}_n})}$ of the queer Lie superalgebra ${\mathfrak{q}_n}$ via ${\mathcal{Q}(n,r)}$, and achieve another explanation of the Schur-Weyl-Sergeev duality. Finally, we depict the Schur-Weyl-Sergeev duality over $\mathbb{Z}$.
\end{abstract}


\maketitle

\section{Introduction}\label{sec_intro}
As an important object in the Lie theory, the queer Lie superalgebra $\qn$ has been intensively studied  in the last two decades.
In order to discover more  properties and applications of {$\qn$},
one usually attempts to establish some relationships between $U(\qn)$, the universal enveloping superalgebra, and other algebras, especially some finite dimensional quotient algebras.
The Schur-Weyl-Sergeev duality is such a remarkable one. This duality was exploited by Sergeev  during the study of the polynomial representations of the queer Lie superalgebra $\qn$  (see \cite{Ser})  
 and queer Schur superalgebras $\Qnr$ were used to bridge representations of
$\qn$ with those of the Sergeev superalgebra $\SerA$.

More precisely, let $V=\mathbb{C}^{n|n}$ be the super vector space.
The natural representation of $\qn$ over $V$ induces the representation $\rho_r$ of  $\Uqn$ over $V^{\otimes r}$.
The Sergeev superalgebra $\SerA$ is defined to be the group algebra of the double cover of symmetric group $\fS_r$ (see \cite{BK2}),
and it is also considered as the degenerating case of Hecke-Clifford superalgebra(see \cite{DW1, Ol}) with $q=1$. The superspace $V^{\otimes r}$ can be endowed with a right $\SerA$-module structure. The Schur-Weyl-Sergeev duality is described as the double centralizer property as follows:
\begin{align*}
	{\End}_{\SerA}(V^{\otimes r})= \rho_r(\Uqn), \qquad
	{\End}_{\Uqn}{(V^{\otimes r})}^{op}=\SerA.
\end{align*}
The queer Schur superalgebra $\Qnr$ was defined as $\rho_r(\Uqn)$  in  \cite{BK},
which means $\Qnr$ is the image of $\Uqn$ over $\rho_r$.
Thus every representation of $\Qnr$ can be inflated to a polynomial representation of $\qn$, and representations of $\SerA$ and $\Uqn$ are tied together through $\Qnr$.

Can such an inflation be made at the positive characteristic level? An affirmative answer to this question would result in a
direct application of the work \cite{BK,BK2} by
Brundan and Kleshchev, in which they determined
the classifications of the irreducible representations of  {$\SerA$} in positive characteristic $p>2$
and the irreducible polynomial  representations of the queer supergroup (scheme) $G=Q(n)$. They also link the representations of $Q(n)$ to those of
$U(\qn)$ via the distribution algebras Dist$(G)$. 

To answer the question above, we need a Schur-Weyl-Sergeev duality at the integral level.
Note that the Schur-Weyl-Sergeev duality is usually considered as a super analog of the Schur-Weyl duality for $\mathfrak{gl}_n$ (see \cite{Sch1,Sch2}). As is well known, the integral Schur-Weyl duality for the quantum group $U_q(\mathfrak{gl}_n)$ was established in
\cite{Du}. This work is built on a new realization of quantum $\mathfrak{gl}_n$
via the quantum Schur algebras given by Beilinson, Lusztig and MacPherson (BLM) in \cite{BLM}. The BLM's idea has been extended to quantum $\mathfrak{gl}_{m|n}$ in \cite{DG} and affine quantum $\widehat{\mathfrak{gl}}_n$ in \cite{DF}. This motivates us to extend it to $\Uqn$.
In \cite{DW2}, Du and Wan showed the  presentation of {$\Qnr$} using the Drinfeld-Jimbo presentation of $\Uqn$ in \cite{LS},
and then reconstructed  $\Qnr$ in term of Sergeev superalgebra in \cite{DW1}. It would be natural to extend the BLM's work to the queer case.

In this article, based on the studies in \cite{DW1,DW2}, we will extend BLM's construction to the queer Lie superalgebra $\mathfrak q_n$. We will reconstruct $\Uqn$ via $\Qnr$, using some explicit multiplication formulas of the generators on the basis, and establish the Schur--Weyl--Sergeev duality at the integral level.
Since {$\SerA$} is the combination of symmetric group and  Clifford  superalgebra,  the structure of $\Qnr$ is much more complicated than the Schur algebra. It requires many sophisticated calculations when we compute the fundamental multiplication formulas in $\Qnr$ in section 4.

This  paper is organized as follows.
We recall the Sergeev superalgebra $\SerA$,
the queer Schur superalgebra $\Qnr$ and its standard basis
in Section \ref{sec_qschur}.
Then we concentrate on the study of some preliminary formulas with respect to the Sergeev superalgebra in Section \ref{sec_sergeev}.
By applying these formulas, Section \ref{sec_mul_qschur} gives the main multiplication formulas,
which will be used  for the realization of $\Uqn$ via $\Qnr$.
In order to construct a  superalgebra isomorphic to $\Uqn$,
it is necessary to draw these queer Schur superalgebras in a unified form.
In Section \ref{sec_spanning}, we construct some uniform spanning set for {$\Qnr$} and study their uniform multiplication formulas among some of its elements
especially the generators, then  we achieve a matrix presentation of $\Qnr$ which is equivalent to the one in \cite{DW2}.
In Section \ref{sec_realization}, referring to the BLM type realization of $U_q(\mathfrak{gl}_n)$,
based on the result of Section \ref{sec_spanning},
we construct a superalgebra {$\USn$} isomorphic to $\Uqn$
from the direct product of $\Qnr$ for all $r\geq 0$.
This superalgebra has a basis with explicit multiplication formulas.
In the last section,
identifying $\Uqn$ with {$\USn$} and applying  the multiplication formulas in Section \ref{sec_spanning},
we get a {$\ZZ$}-subsuperalgebra of {$\Uqn$}, which is the Kostant {$\ZZ$}-form of $\Uqn$,
and then achieve the Schur-Weyl-Sergeev duality over $\ZZ$.

 For the quantum case, Olshanski constructed a quantum deformation $\Uqqn$ of $\Uqn$,
 and established the quantum Schur-Weyl-Sergeev duality in \cite{Ol}.
 Du and Wan investigated queer $q$-Schur superalgebras $\QqscR$ via their presentations in \cite{DW2}, building on
 the Drinfeld-Jimbo type presentation of $\Uqqn$ in \cite{GJKK,LS}. Recently,  Du, Lin and Zhou obtained
 a new realisation of $\Uqqn$ by directly constructing the regular representation via $v$-differential operators in \cite{DLZ}. This approach does not use queer $q$-Schur superalgebras.
All these results suggest the existence of an integral Schur-Weyl-Sergeev duality in the quantum case.
We hope to tackle this problem in the near future.

Throughout the article, assume $n, r\in \NN^+$.  Without special statement, all the algebras are defined over $\CC$. For $i,j \in \ZZ $ with $i<j$, denote $[i,j]=\{i,i+1,\cdots,j\}$.
When we consider the addition between the numbers in $\mathbb N$ and the ones in $\ZG=\{\bar{0},\bar{1}\}$, $\bar{0}$ and $\bar{1}$ are identified with $0$ and $1$ respectively.

\section{The Queer Schur Superalgebra }\label{sec_qschur}

Let {$\fS_r$}  be the symmetric group on $r$ letters,
{$\CMN(n,r) \subset \NN^n$} be  the set of  compositions of $r$ into $n$ parts.
For any given {$\lambda \in \CMN(n,r)$}, denote the standard Young subgroup  of {$\fS_r$} associated to $\lambda$ as $\fS_{\lambda}$, which is the subgroup of {$\fS_r$} stabilizing the following sets
\begin{align*}
\{1,2,\cdots,\lambda_1\},
\{\lambda_1+1,\lambda_1+2,\cdots,\tilde{\lambda}_2\},
\cdots\cdots
\{ \tilde{\lambda}_{n-1}+1, \tilde{\lambda}_{n-1}+2,\cdots,\tilde{\lambda}_n\}.
\end{align*}
where $\tilde{\lambda}_k=\sum_{i=1}^k \lambda_i$.
Let {$\D_{\lambda}$} be the set of shortest representatives of  right cosets of {$\fS_{\lambda}$} in {$\fS_r$},
and  {$\D_{\lambda, \mu} $} be the set of shortest representatives of the {$\fS_{\lambda}$}-{$\fS_{\mu}$} double cosets of {$\fS_r$} for any {$ \lambda,\mu \in \CMN(n,r)$}.

For any {$J \subset \fS_r$}, set
\begin{align*}
	x_{J} = \sum_{w \in J} w.
\end{align*}
Specially, if $J=\fS_{\lambda}$, denote $x_J=x_\lambda$. 
According to \cite[Lemma 7.32]{DDPW},
 for any {$s=(i,i+1)\in\fS_{\lambda}$}, it follows
\begin{equation}
	s x_\lambda= x_{\lambda} s = x_{\lambda}. \label{s_on_S}
\end{equation}

The Clifford algebra {$\C_r$} over  $\CC$  is the algebra  generated by {$c_1, \cdots, c_r$} satisfying
\begin{gather}
	c_i^2 = -1, \quad c_i c_j = - c_j c_i, \qquad \mbox{ for } 1 \le i \ne j \le r; \label{eqclif}
\end{gather}

As in \cite{Ser} and \cite{DW2},
the Sergeev superalgebra {$\SerA$} is defined to be the group algebra of
{$\fS_r \ltimes  \C_r$}  over $\CC$.
In other words, {$\SerA$} is the superalgebra generated by even generators  {$s_1, \cdots, s_{r-1}$}
and odd generators {$c_1, \cdots, c_r$}  subject to \eqref{eqclif} and  the following relations
\begin{align*}
	& s_i^2 = 1, \quad s_i s_j = s_j s_i, \quad 1 \le i,j \le r-1, |i-j|>1,\\
	& s_i s_{i+1} s_i = s_{i+1} s_i s_{i+1}, \quad 1 \le i \le r-2, \\
	& s_i c_i = c_{i+1} s_i, \   s_{i}c_{i+1}=c_is_i, \  s_i c_j = c_j s_i,\ 1\le i \le r-1, 1 \le j \le r, j \ne i, i+1.
\end{align*}

For {$r \ge 1$} and {$1 \le i  \le  j \le r$}, define the elements in $\SerA$:
\begin{align}
	c_{i,j} = c_i + c_{i+1} + \cdots + c_{j-1} + c_j, \label{c_ij}
\end{align}
and
\begin{equation}
\begin{aligned}
	& \SAIJ(i,j) = 1 + s_{i} + s_{i} s_{i+1} + \cdots +  s_{i} s_{i+1}  \cdots s_{j} , \\
	& \SDIJ(j,i) = 1 + s_{j} + s_{j} s_{j-1} + \cdots +  s_{j} s_{j-1}  \cdots s_{i}   .
\end{aligned}	\label{s_ij}
\end{equation}

For {$\lambda = (\lambda_1, \cdots, \lambda_n) \in \CMN(n, r), \alpha \in \ZG^n $},  let
\begin{align}
	c^{\alpha}_{\lambda} = {(c_{ 1, \tilde{\lambda}_1})}^{\alpha_1}
				{(c_{ \tilde{\lambda}_1+1,  \tilde{\lambda}_2})}^{\alpha_2}
				\cdots
				{(c_{\tilde{\lambda}_{n-1}+1,  \tilde{\lambda}_n})}^{\alpha_n}.
	\label{c_alpha}
\end{align}

 The quantum queer Schur superalgebra {$\QqscR$} over commutative ring $R$ is introduced in \cite{DW1}.
Specializing {$R$} to {$\CC$} and {$q=1$},
 {$\QqscR$} degenerates to the queer Schur superalgebra $\Qnr$,
which is defined as
\begin{align*}
	\Qnr = \End_{\SerA}(\bigoplus_{\lambda \in \CMN(n,r)} x_{\lambda} \SerA).
\end{align*}
The algebra $\Qnr$ can also be defined as $\End_{\SerA}(V^{\otimes r})$ in \cite[\S4]{BK}, where $V$ is a superspace.  
Du and Wan proved that both definitions are equivalent.

Given $M=(m_{i,j})\in \MN(n)$, denote $\snorm{M}=\sum_{i,j}m_{i,j}$, and
\begin{align*}
&\ro(M)=\left(
	\sum_{i=1}^n m_{1,i},
	\sum_{i=1}^n m_{2,i},
	\cdots,
	\sum_{i=1}^n m_{n,i}
\right), \\
&\co(M)=\left(
	\sum_{i=1}^n m_{i,1},
	\sum_{i=1}^n m_{i,2},
	\cdots,
	\sum_{i=1}^n m_{i,n}
\right), \\
	& \nu_{M} = (m_{1,1}, \cdots, m_{n,1}, m_{1,2}, \cdots, m_{n,2}, \cdots,  m_{1,n}, \cdots, m_{n,n}),
\end{align*}
for given {$h \in [1,n-1]$}, {$k \in [1,n]$}, denote
\begin{align}
&	\tilde{m}_{h,k}
	= \sum_{j=1}^{k-1}\sum_{i=1}^{n} m_{i,j} + \sum_{i=1}^{h} m_{i,k}. \label{mhk}
\end{align}

Set
\begin{align*}
& \MNR(n,r) = \{ M \in \MN(n) \where \snorm{M} = r \}.
\end{align*}
According to \cite[Section 4.2]{DDPW}, there is a bijection
\begin{align*}
	\mathfrak{j}: \MNR(n,r) &\to \{ (\lambda,  d, \mu) \where \lambda,\mu \in \CMN(n,r), d \in \D_{\lambda, \mu} \} \\
	M & \mapsto (\ro(M), d_M, \co(M)).
\end{align*}
The more explicit presentation of {$d_M$} is shown in Section 3.

Denote
\begin{align*}
	& \MNZN(n) = \{ M=(\SUP{M}) \where \SE{M} \in \MN(n), \SO{M} \in \MZ(n) \}, \\
	& \MNZ(n,r) = \{ M=(\SUP{M}) \in \MNZN(n) \where  \snorm{\SE{M}} + \snorm{\SO{M}} = r \}.
\end{align*}
 For {$M \in  \MNZN(n) $} with  {$\SE{M}=(\SEE{m}_{i,j}), \SO{M}=(\SOE{m}_{i,j})$}, denote  {$\ABSUM{M} = (m_{i,j})=\SE{M}  + \SO{M}$} where
{$m_{i,j} = \SEE{m}_{i,j} + \SOE{m}_{i,j} $}. In order to simplify the notations, set
\begin{equation}\label{notations}
\begin{aligned}
\snorm{M} = \sum_{1 \le i,j \le n}m_{i,j},\,\ro({M}) =  \ro(\ABSUM{M}),\,\co({M}) =  \co(\ABSUM{M}),\,\nu_{M} =\nu_{\ABSUM{M}},\,d_{M}= d_{\ABSUM{M}}.
\end{aligned}
\end{equation}
%


For any $M \in \MNZ(n,r)$, let $\lambda = \ro(M), \mu = \co(M)$,  and
\begin{align}
	c_{M} =  c_{\nu_{M}}^{\nu_{\SO{M}}}   \in \C_r, \qquad
	T_{M} =  x_{\lambda} d_{M} c_{M}
			\sum _{{\sigma} \in \D_{{\nu}_{M}} \cap \fS_{\mu}} \sigma \in \SerA. \label{c_A}
\end{align}

According to \cite[Theorem 5.3]{DW1},  $\Qnr$ has a standard basis
\begin{align}
	\{ \phi_{M} \where M \in \MNZ(n,r)\}, \label{basis_phi}
\end{align}
where {$\phi_{M}$} is defined as
\begin{align*}
	\phi_{M}(x_{\mu}h) = \delta_{\mu, \co(M)} T_{M}h,
	\quad  \mbox{ for all } h \in \SerA, \  \mu \in \CMN(n,r).
\end{align*}

\section{Some Preliminary Multiplication Formulas in {$\SerA$}}\label{sec_sergeev}

During the process of constructing {$\Uqn$} via queer Schur superalgebras {$\Qnr (r\geq 0)$}, we need some multiplication formulas for some special elements of the standard basis
in $\Qnr$.
Before all these works, some formulas for {$\SerA$} must be considered at first.

For {$A =(a_{i,j})= \MN(n)$} and  {$h \in [1,n-1]$},  {$k \in [1,n]$},
set
\begin{align*}
A^+_{h,k}=A+E_{h,k}-E_{h+1,k}, \qquad A^-_{h,k}=A-E_{h,k}+E_{h+1,k}.
\end{align*}
According to \cite[Lemma 3.2]{DGZ},  $d_A$ is presented as
\begin{displaymath}\label{d_A}
	d_A =
		(w_{2,1}w_{3,1}\cdots w_{n,1})
		(w_{2,2}w_{3,2}\cdots w_{n,2})
		\cdots
		(w_{2,n-1}w_{3,n-1}\cdots w_{n,n-1})
\end{displaymath}
or
\begin{align}
	d_A =
	\begin{pmatrix}
		w_{2,1} &w_{2,2} &\cdots &w_{2,k} &\cdots &w_{2,n-1}  \\
		w_{3,1} &w_{3,2} &\cdots &w_{2,k} &\cdots &w_{3,n-1}  \\
		\vdots &\vdots   &\cdots &\vdots   &\cdots&\vdots     \\
		w_{n,1} &w_{n,2} &\cdots  &w_{n,k} &\cdots &w_{n,n-1}  \\
	\end{pmatrix} \quad,
\end{align}
where {$w_{i,j}$} is defined as : \\
	\ \qquad if {$a_{i,j} = 0$}, or {$a_{i,j}>0$} but {$\sigma_{i-1,j} = \tilde{a}_{i-1,j}$}, then {$w_{i,j} = 1$}; \\
	\ \qquad if {$a_{i,j}>0$} and {$\sigma_{i-1,j} > \tilde{a}_{i-1,j}$}, then
\begin{equation}
\begin{aligned}
	w_{i,j} =
		&(s_{\sigma_{i-1,j}} s_{\sigma_{i-1,j} - 1} \cdots s_{\tilde{a}_{i-1,j} +1}) \\
		&(s_{\sigma_{i-1,j}+1} s_{\sigma_{i-1,j} } \cdots s_{\tilde{a}_{i-1,j} +2}) \\
		&\cdots \\
		&(s_{\sigma_{i-1,j}+a_{i,j}-1} s_{\sigma_{i-1,j}+a_{i,j}-2 } \cdots s_{\tilde{a}_{i,j}})
\end{aligned}		\label{wij}
\end{equation}
where
\begin{displaymath}
	{\sigma}_{i,j} = \sum_{h=1}^{n}\sum_{k=1}^{j-1} a_{h,k}
		+ \sum_{h \le i, k \ge j}  a_{h,k}.
\end{displaymath}

The following proposition is  from  \cite{DGZ}.
\begin{prop}{\cite[Proposition 3.6]{DGZ}}\label{prop_shift}
For  $A =(a_{i,j})= \MN(n) $,
set $\lambda = \ro(A)$ and  $a_k=\sum^{k-1}_{u=1}a_{h+1, u}$,  $b_k=\sum^{n}_{u=k+1}a_{h, u}$.
 Then  for  {$ 0 \leq p <a_{h+1,k} $} and  {$0 \leq  q<a_{h,k}$},
\begin{align*}
{\rm (i)}\quad
	&
	s_{\tilde{\lambda}_{h}+1} \cdots s_{\tilde{\lambda}_{h}+a_k+p }  {d_{A}} = 	
	s_{\tilde{\lambda}_{h}} s_{\tilde{\lambda}_{h}-1}\cdots s_{\tilde{\lambda}_{h}-b_k+1}  {d_{A^+_{h,k}}}
  	s_{\tilde{a}_{h,k}+1} \cdots s_{\tilde{a}_{h,k}+p}, \\
{\rm (ii)} \quad
	&
	s_{\tilde{\lambda}_{h}-1} \cdots s_{\tilde{\lambda}_{h}-b_k-q } {d_{A}} =
	s_{\tilde{\lambda}_{h}} s_{\tilde{\lambda}_{h}+1}\cdots s_{\tilde{\lambda}_{h}+a_{k}-1}  {d_{A^-_{h,{k}}}}
	s_{\tilde{a}_{h,{k}}-1} \cdots s_{\tilde{a}_{h,{k}}-q}.
\end{align*}

\end{prop}

\begin{lem}\label{CS}
For $i_t<k+t-1\leq j_t $ with $1\leq t\leq l $, we have
$$
\begin{aligned}
&c_k (s_{j_1}s_{j_1-1}\cdots s_{i_1})(s_{j_2}s_{j_2-1}\cdots s_{i_2})\cdots(s_{j_l}s_{j_l-1}\cdots s_{i_l})\\
=&(s_{j_1}s_{j_1-1}\cdots s_{i_1})(s_{j_2}s_{j_2-1}\cdots s_{i_2})\cdots(s_{j_l}s_{j_l-1}\cdots s_{i_l})c_{k+l}.
\end{aligned}$$
\end{lem}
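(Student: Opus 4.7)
The plan is to prove this by a short induction on $l$, using only the three commutation relations between Clifford generators and symmetric-group generators that define $\SerA$:
$s_i c_i = c_{i+1} s_i$, $s_i c_{i+1} = c_i s_i$, and $s_i c_j = c_j s_i$ for $j \ne i, i+1$. The second of these, applied with $i=k$, immediately gives the key identity $c_k s_k = s_k c_{k+1}$, which is the mechanism by which the index of $c$ is promoted.

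For the base case $l=1$, I would push $c_k$ through the single descending string $s_{j_1} s_{j_1-1} \cdots s_{i_1}$ from left to right under the hypothesis $i_1 < k \le j_1$. The string naturally splits into three parts. In the initial portion $s_{j_1} s_{j_1-1}\cdots s_{k+1}$, every index is $\ge k+1$, so each factor commutes with $c_k$ by the third relation. At the middle factor $s_k$, I apply $c_k s_k = s_k c_{k+1}$ to convert $c_k$ into $c_{k+1}$. In the tail $s_{k-1} s_{k-2}\cdots s_{i_1}$, every index lies in $[i_1, k-1]$, hence is neither $k+1$ nor $k$, so $c_{k+1}$ commutes past each of these factors. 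The net effect is $c_k (s_{j_1}\cdots s_{i_1}) = (s_{j_1}\cdots s_{i_1}) c_{k+1}$. The endpoint cases $k = j_1$ or $i_1 = k-1$ are handled automatically because the corresponding "portion" is then empty.

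For the inductive step, I would assume the claim for products of $l-1$ descending strings and apply the base case to the first string to obtain
\begin{equation*}
c_k (s_{j_1}\cdots s_{i_1})(s_{j_2}\cdots s_{i_2})\cdots (s_{j_l}\cdots s_{i_l})
= (s_{j_1}\cdots s_{i_1})\, c_{k+1}\, (s_{j_2}\cdots s_{i_2})\cdots (s_{j_l}\cdots s_{i_l}).
\end{equation*}
Then I would invoke the induction hypothesis on the remaining $l-1$ strings with $k$ replaced by $k' = k+1$. The required inequalities $i_t < k' + (t-1) - 1 \le j_t$ for $t = 2, \ldots, l$ reindex exactly to the original hypotheses $i_t < k+t-1 \le j_t$, so they are satisfied, and the induction delivers $c_{k+l}$ at the right-hand end.

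I do not expect any serious obstacle: the content is purely the bookkeeping that the inequality $i_t < k+t-1 \le j_t$ at stage $t$ is precisely what is needed for the letter $c_{k+t-1}$ to meet a descending string whose initial factors all have index $> k+t-1$, whose middle contains the factor $s_{k+t-1}$ at which promotion occurs, and whose trailing factors all have index $< k+t-1$ (so that the promoted $c_{k+t}$ passes through them). The only mild care is to verify the edge cases where one of the two flanking portions is empty; these introduce no difficulty because the relevant relation ($c_k s_k = s_k c_{k+1}$) covers them directly.
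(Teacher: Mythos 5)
Your proof is correct and follows essentially the same route as the paper: an induction on $l$ whose base case splits the descending string $s_{j_1}\cdots s_{i_1}$ at the factor $s_k$ and uses $c_k s_k = s_k c_{k+1}$ together with the commutation of $c_m$ with $s_i$ for $i\ne m-1,m$. The only (immaterial) difference is that you peel off the first string and apply the induction hypothesis to the remaining $l-1$ with $k$ shifted to $k+1$, whereas the paper applies the hypothesis to the first $l-1$ strings and finishes with the base case on the last one.
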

\begin{proof}
When $l=1$, according to the assumption, $i_1<k\leq j_1$. From the relations in $\SerA$,
\begin{equation*}
\begin{aligned}
&c_k(s_{j_1}s_{j_1-1}\cdots s_{i_1})=(s_{j_1}s_{j_1-1}\cdots s_{k+1})c_ks_k(s_{k-1}\cdots s_{i_1})\\
&=(s_{j_1}s_{j_1-1}\cdots s_{k+1})s_kc_{k+1}(s_{k-1}\cdots s_{i_1})=(s_{j_1}s_{j_1-1}\cdots s_{i_1})c_{k+1}.
\end{aligned}
\end{equation*}

By induction on $l$, and assume that the formula is correct for all $i<l$,
then
\begin{align*}
&c_k (s_{j_1}s_{j_1-1}\cdots s_{i_1})(s_{j_2}s_{j_2-1}\cdots s_{i_2})\cdots(s_{j_l}s_{j_l-1}\cdots s_{i_l})\\
=&[(s_{j_1}s_{j_1-1}\cdots s_{i_1})\cdots(s_{j_{l-1}}s_{j_{l-1}-1}\cdots s_{i_{l-1}})] c_{k+l-1}(s_{j_l}s_{j_l-1}\cdots s_{i_l}).
\end{align*}
Because $i_l<k+l-1\leq j_l$, similar to the case $l=1$, it follows
\begin{align*}
c_{k+l-1}(s_{j_l}s_{j_l-1}\cdots s_{i_l})=(s_{j_l}s_{j_l-1}\cdots s_{i_l})c_{k+l} ,
\end{align*}
which proves the result.
\end{proof}

\begin{prop}\label{propR3}
For $A =(a_{i,j})= \MNZN(n) $, let $\lambda = \ro(A)$, with the  notations above, the following formulas can be derived:
\begin{align*}
{\rm (i)} \quad &c_{\tilde{\lambda}_{h}+a_k+p+1} {d_A} =  {d_A} c_{\tilde{a}_{h,k}+p+1}, \\
{\rm (ii)}  \quad &c_{\tilde{\lambda}_{h}-b_{k}-q} {d_A} = {d_A} c_{\tilde{a}_{h,{k}} - q }.
\end{align*}
\end{prop}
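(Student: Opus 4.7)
The plan is to reduce the proposition to a pure permutation identity for $d_A$ and then invoke the semidirect-product structure of $\SerA$. The main tool will be Proposition~\ref{prop_shift}; Lemma~\ref{CS} provides an alternative, more computational route via the explicit factorization of $d_A$.

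For part (i), I would compare (\ref{tjpa}) for two consecutive values of its parameter, $p-1$ and $p$, where $1\le p\le a_{h+1,k}-1$. Both identities share the factor $s_{\tilde\lambda_h}\cdots s_{\tilde\lambda_h-b_k+1}\,d_{A^+_{h,k}}$ on the right-hand side; incrementing $p$ by one prepends $s_{\tilde\lambda_h+a_k+p}$ to the left-hand side and appends $s_{\tilde a_{h,k}+p}$ to the right-hand side. Cancelling the common prefix $s_{\tilde\lambda_h+1}\cdots s_{\tilde\lambda_h+a_k+p-1}$ (an invertible element of the group algebra) yields
\begin{equation*}
s_{\tilde\lambda_h+a_k+p}\,d_A \;=\; d_A\,s_{\tilde a_{h,k}+p},\qquad 1\le p\le a_{h+1,k}-1.
\end{equation*}
Collating these relations and using that $d_A$, as the shortest representative of its $\fS_\lambda$-$\fS_\mu$ double coset, is strictly increasing on each sub-block $[\tilde a_{h,k}+1,\ \tilde a_{h,k}+a_{h+1,k}]$, one extracts the permutation identity $d_A(\tilde a_{h,k}+p+1)=\tilde\lambda_h+a_k+p+1$ for all $0\le p<a_{h+1,k}$.

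The Sergeev relations $s_ic_i=c_{i+1}s_i$, $s_ic_{i+1}=c_is_i$, and $s_ic_j=c_js_i$ for $j\notin\{i,i+1\}$ imply $w c_j w^{-1}=c_{w(j)}$ for every $w\in\fS_r$. Specializing to $w=d_A$ converts the permutation identity above into formula (i). Part (ii) is entirely analogous: applying the same incremental comparison to (\ref{tjma}) in place of (\ref{tjpa}) --- noting that the staircases there run in the opposite direction --- yields $d_A(\tilde a_{h,k}-q)=\tilde\lambda_h-b_k-q$ for $0\le q<a_{h,k}$, from which (ii) follows.

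I expect the main obstacle to be justifying the order-preserving/endpoint step rigorously within the notation of the excerpt, since the incremental comparison determines $d_A$ only up to possible swaps at the two endpoints of each sub-block. A cleaner alternative, likely closer to the authors' intended argument, is to apply Lemma~\ref{CS} directly to each staircase $w_{i,j}$ appearing in the explicit factorization $d_A=(w_{2,1}\cdots w_{n,1})\cdots(w_{2,n-1}\cdots w_{n,n-1})$: each such application shifts the index of the $c$-generator by exactly $a_{i,j}$, and a careful bookkeeping of the cumulative shifts as $c$ traverses all factors of $d_A$ recovers both (i) and (ii) directly, avoiding any appeal to the double-coset geometry.
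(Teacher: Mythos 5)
Your closing suggestion is in fact the paper's proof: the authors push the Clifford generator through the explicit factorization $d_A=(w_{2,1}\cdots w_{n,1})\cdots(w_{2,n-1}\cdots w_{n,n-1})$ factor by factor, splitting into $k=1$ and $k>1$. Be aware, though, that the bookkeeping is not "each application of Lemma~\ref{CS} shifts the index by $a_{i,j}$": most factors commute with the $c$-generator outright (all their $s$-indices are too small or too large), only the factors $w_{i,j}$ with $i\ge h+2$ in columns $j<k$ shift the index upward via Lemma~\ref{CS}, and the pivotal factor $w_{h+1,k}$ carries the index \emph{downward} from $\sigma_{h,k}+p+1$ to $\tilde a_{h,k}+p+1$ by a direct computation along a single descending staircase, which is not an instance of Lemma~\ref{CS} as stated.

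Your primary route is genuinely different and mostly sound --- the incremental comparison of \eqref{tjpa} at $p-1$ and $p$ does yield $s_{\tilde\lambda_h+a_k+p}d_A=d_As_{\tilde a_{h,k}+p}$ for $1\le p\le a_{h+1,k}-1$, and conjugation then pins down $d_A$ on the block $[\tilde a_{h,k}+1,\tilde a_{h,k}+a_{h+1,k}]$ up to a global reversal --- but it has two concrete gaps. First, when $a_{h+1,k}=1$ the range $1\le p\le a_{h+1,k}-1$ is empty, so the comparison produces nothing at all, yet the proposition still asserts $c_{\tilde\lambda_h+a_k+1}d_A=d_Ac_{\tilde a_{h,k}+1}$ in that case (and this case is actually used later, e.g.\ in Theorem~\ref{U1A}); this single-entry block needs a separate argument, say from the explicit form of $w_{h+1,k}$ or an induction relating $d_A$ to $d_{A^+_{h,k}}$. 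Second, ruling out the order reversal requires the characterization of distinguished double-coset representatives as increasing on the blocks of $\nu_A$ (equivalently of $\co(A)$); this is standard but is not among the statements available in the excerpt, so it must be imported with a citation. The analogous remarks apply to part (ii) via \eqref{tjma}, with $a_{h,k}=1$ as the degenerate case. With those two points repaired your first argument would give an independent proof; as written it does not cover the whole claim.
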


\begin{proof}
The key to the proof is  comparing the index of {$c_{i}$} with the indices of {$s$} in the expression of {$d_A$}. Only {\rm (i)} will be proved, and  {\rm (ii)} can be derived similarly.

Denote {$w_{2,0}\cdots w_{n,0} = 1$}, {$\sum_{l=h+2}^n \sum_{i=1}^{0}a_{l,i} = 0$}.
First of all, we claim
\begin{equation}
\begin{aligned}\label{clm_eq}
&c_{\tilde{\lambda}_{h}+a_k+p+1} (w_{2,1}\cdots w_{n,1}) \cdots (w_{2,k-1}\cdots w_{n,k-1})\\
=& (w_{2,1}\cdots w_{n,1}) \cdots (w_{2,k-1}\cdots w_{n,k-1})
	 c_{\tilde{\lambda}_{h}+ a_k + p + 1 + \sum_{l=h+2}^n \sum_{i=1}^{k-1}a_{l,i}}.
\end{aligned}
\end{equation}

Indeed, when $k=1$,  there is nothing to prove.
When $k=2$, since $a_k=a_{h+1,1}$, for all $w_{i,1}$ with $i\leq h+1$,
the maximal index of $s$ is { $\tilde{\lambda}_h+a_{h+1,1}-1 (< \tilde{\lambda}_h+a_k+p)$}, then
\begin{align*}
 c_{\tilde{\lambda}_{h}+a_k+p+1}  w_{2,1} \cdots w_{h+1,1}
 =   w_{2,1} \cdots w_{h+1,1} c_{\tilde{\lambda}_{h}+a_k+p+1} .
\end{align*}
%
According to the expression of $w_{i,1}$ for $i>h+1$,
which is the product of
\begin{align*}
(s_{\tilde{\lambda}_{i-1}+\tilde{a}_{i,1}+t} s_{\tilde{\lambda}_{i-1}+\tilde{a}_{i,1}+t-1}
	\cdots s_{\tilde{a}_{i-1,1}+t+1})
\end{align*}
for all $t$  satisfying $0 \leq t <a_{i,1}$,
since
\begin{align*}
\tilde{\lambda}_{i-1}+\tilde{a}_{i,1}+t \geq \tilde{\lambda}_h+a_k+p+1+\sum_{l=h+2}^{i-1}a_{l,1}+t+1 > \tilde{a}_{i-1,1}+t+1,
\end{align*}
 Lemma \ref{CS} shows
\begin{align*}
c_{ \tilde{ \lambda}_{h} + a_k + p + 1 } (w_{h + 2, 1} \cdots w_{n,1} )
= (w_{h+2,1}  \cdots  w_{n,1}) c_{\tilde{\lambda}_{h} + a_k + \sum_{l=h+2}^n  a_{l,1}+p+1}.
\end{align*}

Assume $k>2$, and for any $u<k-1$, we have
\begin{align*}
&c_{\tilde{\lambda}_{h}+a_k+p+1} (w_{h+2,1} \cdots w_{n,1}) \cdots (w_{2,u}\cdots w_{n,u})\\
=&(w_{h+2,1} \cdots w_{n,1}) \cdots (w_{2,u} \cdots w_{n,u})
	c_{\tilde{\lambda}_{h}+a_k+\sum_{l=h+2}^n \sum_{t=1}^u a_{l,t}+p+1 }.
\end{align*}
By induction on $u$, when $u=k-1$,
consider
\begin{align*}
c_{\tilde{\lambda}_{h}+a_k+\sum_{l=h+2}^n \sum_{t=1}^{k-2} a_{l,t}+p+1} (w_{2,k-1} w_{3,k-1} \cdots w_{n,k-1}).
\end{align*}
Reminding the expression of $w_{i,k-1}$, for any $i<h+2$, the maximal index of $s$ is $\sigma_{i-1,k-1}+a_{i,k-1}-1$ which is less than
$\tilde{\lambda}_{h} + a_k + \sum_{l=h+2}^n \sum_{t=1}^{k-2} a_{l,t}+p $,
and this causes $c_{\tilde{\lambda}_{h} + a_k + \sum_{l=h+2}^n \sum_{t=1}^u a_{l,t}+p+1}$ commutes with these $w_{i,k-1}$.
For $i\geq h+2$, since $w_{i,k-1}$ is the product of $(s_{\sigma_{i-1,k-1}+t}s_{\sigma_{i-1,k-1}+t-1}\cdots s_{\tilde{a}_{i-1,k-1}+t})$
with $1\leq t\leq a_{i,k-1}$,
and since
\begin{align*}
\sigma_{i-1,k-1}+t\geq \tilde{\lambda}_{h}+a_k+\sum_{l=h+2}^n\sum_{t=1}^{k-2} a_{l,t}+\sum_{j=h+1}^{i-1}a_{j,k-1}+t+p+1> \tilde{a}_{i-1,k-1}+t,
\end{align*}
applying Lemma \ref{CS} again, it follows
\begin{align*}
&c_{\tilde{\lambda}_{h} + a_k + \sum_{l=h+2}^n \sum_{t=1}^{k-2} a_{l,t}+p+1} (w_{h+2,k-1} \cdots w_{n,k-1})\\
=&(w_{h+2,k-1}\cdots w_{n,k-1})
	c_{\tilde{\lambda}_{h} + a_k + \sum_{l=h+2}^n \sum_{t=1}^{k-1} a_{l,t}+p+1 },
\end{align*}
 hence the equation \eqref{clm_eq} is proved.

Now we consider
\begin{align*}
\X = c_{\tilde{\lambda}_{h}+a_k+\sum_{l=h+2}^n \sum_{t=1}^{k-1}a_{l,t}+p+1}
		(w_{2,k}\cdots w_{n,k})\cdots (w_{2,n-1}\cdots w_{n,n-1}).
\end{align*}
Notice
$\tilde{\lambda}_{h}+a_k+\sum_{l=h+2}^n\sum_{t=1}^{k-1}a_{l,t}+p+1=\sigma_{h,k}+p+1$
where $0\leq p+1\leq a_{h+1,k}$.
For $i<h+1$, the maximal index of $s$ in $w_{i,k}$ is $\sigma_{h-1,k}+a_{h,k}-1$
which is less than $\sigma_{h,k}+p$,
so $c_{\sigma_{h,k}+p+1}$
commutes with these $w_{i,k}$,
then
\begin{align*}
\X =
		 w_{2,k}\cdots w_{h,k} c_{\sigma_{h,k}+p+1}   w_{h+1,k} \cdots w_{n,k} \cdots  w_{2,n-1}\cdots w_{n,n-1}.
\end{align*}
Because
\begin{align*}
& c_{\sigma_{h,k}+p+1}   w_{h+1,k}  \\
&=  c_{\sigma_{h,k}+p+1}   (s_{\sigma_{h,k}} \cdots s_{\tilde{a}_{h,k} +1})
		\cdots
		(s_{\sigma_{h,k}+p} \cdots s_{\tilde{a}_{h,k} + p+1})
		\cdots
		(s_{\sigma_{h,k}+a_{h+1,k}-1} \cdots s_{\tilde{a}_{h+1,k}}) \\
&=  (s_{\sigma_{h,k}} \cdots s_{\tilde{a}_{h,k} +1})
		\cdots
		c_{\sigma_{h,k}+p+1}
		(s_{\sigma_{h,k}+p} \cdots s_{\tilde{a}_{h,k} + p+1})
		\cdots
		(s_{\sigma_{h,k}+a_{h+1,k}-1} \cdots s_{\tilde{a}_{h+1,k}}) \\
& \qquad ( \mbox{apply the relation } c_{i+1} s_i =s_i c_i  )\\
&=  (s_{\sigma_{h,k}} \cdots s_{\tilde{a}_{h,k} +1})
		\cdots
		(s_{\sigma_{h,k}+p} \cdots s_{\tilde{a}_{h,k} + p+1})
		c_{\tilde{a}_{h,k} + p+1}
		\cdots
		(s_{\sigma_{h,k}+a_{h+1,k}-1} \cdots s_{\tilde{a}_{h+1,k}}) \\
&=   w_{h+1,k}
		c_{\tilde{a}_{h,k} + p+1}
\end{align*}
and
\begin{align*}
&c_{\tilde{a}_{h,k} + p+1}   w_{h+2,k} \cdots   w_{n,k}
= w_{h+2,k} \cdots   w_{n,k}  c_{\tilde{a}_{h,k} + p+1},
\end{align*}
it follows
\begin{align*}
\X&=
	w_{2,k}\cdots w_{h,k} w_{h+1,k} \cdots w_{n,k}  c_{\tilde{a}_{h,k} + p+1}
	\cdots  w_{2,n-1}\cdots w_{n,n-1} \\
&=
	w_{2,k}\cdots w_{h,k} w_{h+1,k} \cdots w_{n,k}
	\cdots  w_{2,n-1}\cdots w_{n,n-1} c_{\tilde{a}_{h,k} + p+1} .
\end{align*}
As a consequence,  {\rm(i)} is proved.
\end{proof}

\begin{prop}\label{prop_splus}
	Use the notations in \eqref{s_ij}.
	For  {$ h \in [1,n-1], k \in [1,n] $}, {$A=(a_{i,j}) \in \MN(n)$}
	with {$a_{h+1,k} \ge 1$},  let $\lambda = \ro{(A)}$, and $\mu = \co{(A)}$,
	then
\begin{align*}
	& \SAIJ({{\tilde{a}_{h,k}} + 1},  {{\tilde{a}_{h,k}} + a_{h+1,k}-1})
		\sum_{\sigma\in\mathcal{D}_{\nu_A}\cap \mathfrak{S}_\mu}\sigma
	 = \SDIJ({\tilde{a}_{h,k}}, {\tilde{a}_{h,k} - a_{h,k} + 1})
		\sum_{\sigma\in\mathcal{D}_{\nu_{A^+_{h,k}}}\cap \mathfrak{S}_\mu}\sigma.
\end{align*}
\end{prop}
\begin{proof}
Recall $A^+_{h,k}=A+E_{h,k}-E_{h+1,k}$, and
\begin{align*}
\nu_A=(\cdots, a_{h,k},a_{h+1,k},\cdots), \qquad
\nu_{A^+_{h,k}}=(\cdots,a_{h,k}+1,a_{h+1,k}-1,\cdots),
\end{align*}
set $\alpha=(\cdots,a_{h,k},1,a_{h+1,k}-1,\cdots)$,
where the parts which do not appear are the same as {$\nu_{A}$}.
One observes that
$\mathfrak{S}_\alpha\subseteq\mathfrak{S}_{\nu_A}$,
$ \mathfrak{S}_\alpha\subseteq\mathfrak{S}_{\nu_{A^+_{h,k}}}$ and
\begin{align*}
&\mathcal{D}_{\alpha}\cap \mathfrak{S}_{\nu_A}=\{ 1, s_{{\tilde{a}_{h,k}}+1}, s_{{\tilde{a}_{h,k}} + 1}s_{{\tilde{a}_{h,k}} + 2}, \cdots,
		s_{{\tilde{a}_{h,k}} + 1}s_{{\tilde{a}_{h,k}} + 2} \cdots s_{{\tilde{a}_{h,k}} + a_{h+1, k}-1}\}, \\
&\mathcal{D}_{\alpha}\cap \mathfrak{S}_{\nu_{A^+_{h,k}}}= \{ 1, s_{{\tilde{a}_{h,k}}}, s_{{\tilde{a}_{h,k}}}s_{{\tilde{a}_{h,k}}-1}, \cdots,
		s_{{\tilde{a}_{h,k}}}s_{{\tilde{a}_{h,k}} - 1} \cdots s_{{\tilde{a}_{h,k}}-a_{h, k}+1}\}.
\end{align*}

So
\begin{align*}
\sum_{\sigma\in\mathcal{D}_\alpha\cap\mathfrak{S}_\mu}\sigma
	=\sum_{\sigma_1\in\mathcal{D}_{\alpha}\cap \mathfrak{S}_{\nu_A}}	
		\sum_{\sigma_2\in\mathcal{D}_{\nu_A}\cap \mathfrak{S}_\mu}
			\sigma_1\sigma_2
	=\sum_{\sigma_1\in\mathcal{D}_{\alpha}\cap \mathfrak{S}_{\nu_{A^+_{h,k}}}}
		\sum_{\sigma_2\in\mathcal{D}_{\nu_{A^+_{h,k}}}\cap \mathfrak{S}_\mu}
		\sigma_1\sigma_2,
\end{align*}
hence the assertion is proved.
\end{proof}

For $A=(a_{i,j}) \in \MN(n),$  recall that $A^-_{h,k}=A-E_{h,k}+E_{h+1,k}$.
\begin{cor}\label{prop3R5}
	For {$ h,k \in [1,n] $} and {$A=(a_{i,j}) \in \MN(n)$}
	with {$a_{h,k} \ge 1$}, $\ro{(A)}=\lambda$ and $\co{(A)}=\mu$, we have
\begin{align*}
	& \SDIJ({{\tilde{a}_{h,k}} - 1} , {{\tilde{a}_{h,k}} - a_{h,k} + 1})
		\sum_{\sigma\in\mathcal{D}_{\nu_A}\cap \mathfrak{S}_\mu}\sigma
	= \SAIJ( {{\tilde{a}_{h,k}}}, {{\tilde{a}_{h,k}} + a_{h+1,k} - 1})
		\sum_{\sigma\in\mathcal{D}_{\nu_{A^-_{h,k}}}\cap \mathfrak{S}_\mu}\sigma.
\end{align*}
\end{cor}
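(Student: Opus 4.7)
The plan is to deduce this corollary as an immediate consequence of Proposition~\ref{propR5} by applying that proposition to the matrix $A^-_{h,k}$ in place of $A$. The hypothesis of Proposition~\ref{propR5} requires the $(h+1,k)$ entry to be positive; for $B := A^-_{h,k}$, this entry is $b_{h+1,k} = a_{h+1,k}+1 \geq 1$, so Proposition~\ref{propR5} applies without extra assumptions beyond the standing hypothesis $a_{h,k} \geq 1$ needed to form $A^-_{h,k}$ in the first place. A useful bonus of this choice is the identity $B^+_{h,k} = A$, which guarantees the two sides will refer to the correct pair of matrices.

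I would then substitute the parameters carefully: with $B = A^-_{h,k}$ one has $b_{h,k} = a_{h,k}-1$, $b_{h+1,k} = a_{h+1,k}+1$, and therefore $\tilde{b}_{h,k} = \tilde{a}_{h,k}-1$. Plugging these identities into Proposition~\ref{propR5} shows that its left-hand side (applied to $B$) becomes $(1 + s_{\tilde{a}_{h,k}} + s_{\tilde{a}_{h,k}}s_{\tilde{a}_{h,k}+1} + \cdots + s_{\tilde{a}_{h,k}}\cdots s_{\tilde{a}_{h,k}+a_{h+1,k}-1})$ multiplied by $\sum_{\sigma \in \mathcal{D}_{\nu_{A^-_{h,k}}} \cap \mathfrak{S}_\mu} \sigma$, while its right-hand side becomes $(1 + s_{\tilde{a}_{h,k}-1} + s_{\tilde{a}_{h,k}-1}s_{\tilde{a}_{h,k}-2} + \cdots + s_{\tilde{a}_{h,k}-1}\cdots s_{\tilde{a}_{h,k}-a_{h,k}+1})$ multiplied by $\sum_{\sigma \in \mathcal{D}_{\nu_A} \cap \mathfrak{S}_\mu} \sigma$. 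These are precisely the two sides of the corollary (in the opposite order), so the desired identity follows at once.

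If one prefers a self-contained argument that mirrors the proof of Proposition~\ref{propR5}, one can instead introduce the refined composition $\alpha = (\ldots, a_{h,k}-1, 1, a_{h+1,k}, \ldots)$, note that $\mathfrak{S}_\alpha$ sits inside both $\mathfrak{S}_{\nu_A}$ and $\mathfrak{S}_{\nu_{A^-_{h,k}}}$, and enumerate $\mathcal{D}_\alpha \cap \mathfrak{S}_{\nu_A}$ and $\mathcal{D}_\alpha \cap \mathfrak{S}_{\nu_{A^-_{h,k}}}$ as the two chains of minimal-length coset representatives that move the isolated part of size~$1$ (sitting at position $\tilde{a}_{h,k}$) either backwards through the adjacent block of $a_{h,k}-1$ elements or forwards through the adjacent block of $a_{h+1,k}$ elements; the two resulting factorizations of $\sum_{\sigma \in \mathcal{D}_\alpha \cap \mathfrak{S}_\mu} \sigma$ then yield the claim. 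There is no substantial obstacle; the only delicate point is the index bookkeeping, where one must verify that $\tilde{b}_{h,k}-b_{h,k}+1 = \tilde{a}_{h,k}-a_{h,k}+1$ and $\tilde{b}_{h,k}+b_{h+1,k}-1 = \tilde{a}_{h,k}+a_{h+1,k}-1$ so that the two endpoints match those appearing in the statement of the corollary.
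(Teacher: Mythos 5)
Your proposal is correct and follows exactly the paper's own (one-line) proof: apply Proposition~\ref{propR5} with $A$ replaced by $A^-_{h,k}$, using $\bigl(A^-_{h,k}\bigr)^+_{h,k}=A$ and the index shifts $\tilde{a}_{h,k}\mapsto\tilde{a}_{h,k}-1$, $a_{h,k}\mapsto a_{h,k}-1$, $a_{h+1,k}\mapsto a_{h+1,k}+1$. Your bookkeeping checks out, and you supply more detail than the paper does.
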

\begin{proof}
Replace $A$ with $A^-_{h,k}$ and apply Proposition \ref{prop_splus}, the result is proved.
\end{proof}

Recall the notation  {$	c_{i,j}$} in \eqref{c_ij}
and {$\SAIJ(i, j)$} in \eqref{s_ij}.
\begin{lem}\label{ComC-S}
Suppose {$u$}, {$v$}, {$t$} are positive integers and $t<u$, then
\begin{align*}
{\rm (i)}\quad &c_{u-t+1,u+1} \SDIJ(u, {u-t+1})
 = \SDIJ(u, {u-t+1}) c_{u-t+1,u+1};\\
{\rm (ii)}\quad &c_{u,u+v} \SAIJ(u,{u+v-1})
 =\SAIJ(u, {u+v-1}) c_{u,u+v}.
\end{align*}
\end{lem}
\begin{proof}
For any integer {$k$} such that {$u - t + 1 \le k < u+1$},
since
$c_k s_k = s_k c_{k+1} $,  $c_{k+1} s_k =s_k c_k$,
it follows $(c_k + c_{k+1}) s_k = s_k (c_k + c_{k+1})$.
Meanwhile,  $c_j s_k=s_k c_j$ for $ j \neq k, k+1$.
Consequently, {$c_{u-t+1,u+1} s_k  = s_k  c_{u-t+1,u+1} $}
and {\rm (i)} is proved.
The proof of
{\rm (ii)} is similar to {\rm (i)}.
\end{proof}

Lemma \ref{propupper1}-\ref{prop3F10.0} are important for the next section, and
they can be proved by
applying the relations of the generators of Sergeev superalgebra,
especially, $s_i x_\mu=x_\mu s_i = x_\mu$ for any $s_i\in \fS_\mu$.
\begin{lem}\label{propupper1}
 For {$\mu \in \CMN(n, r)$}, assume {$u, v, t \in [1, r]$} with {$t \le u$} and $s_i\in \fS_{\mu}$ for any $i$ satisfying $u-t+1\leq i\leq u$.
	Then
\begin{align*}
{\rm (i)}\quad & x_{\mu} \SDIJ(u, {u-t+1})
		 =  (t+1) x_{\mu} ;\\
	{\rm (ii)}\quad & x_{\mu} c_{u+1} \SDIJ(u, {u-t+1})=x_{\mu}c_{u-t+1,u+1}; \\
	{\rm (iii)}\quad &x_{\mu} c_{u+1}c_{u+1,u+v}
		\SDIJ(u, {u-t+1}) \\
	& =
	\left\{
	\begin{aligned}
		& -(t+1) x_{\mu}, \mbox{ if } v=1;  \\
		& -(t+1) x_{\mu}
			+ x_{\mu} c_{u-t+1,u+1}c_{u+2,u+v}, \mbox{ if } v >1 ;\\
	\end{aligned}
	\right. \\
	{\rm (iv)}\quad
		&x_{\mu} c_{u+1}c_{u-t+1,u} \SDIJ(u, {u-t+1})=0;\\
	{\rm (v)}\quad
		&x_{\mu} c_{u+1}c_{u-t+1,u}c_{u+1,u+v} \SDIJ(u, {u-t+1})
		 = t x_{\mu}c_{u-t+1,u+1}.
\end{align*}
\end{lem}
\begin{proof}
	For {$j \in [u-t+1, u]$},
	{$s_j \in \fS_{\mu}$}
	implies {$x_{\mu} s_j = x_{\mu}$}.
	
{\rm (i)}
The proof is trivial because of equation \eqref{s_on_S}.

{\rm (ii)}Apply  the relations for generators of $\SerA$,
\begin{align*}
	& x_{\mu} c_{u+1} \SDIJ(u, {u-t+1}) \\
	&= x_{\mu} (c_{u+1} + s_u c_u+ s_u s_{u-1}c_{u-1} + \cdots + s_u s_{u-1} \cdots s_{u-t+1}c_{u-t+1})  \\
	&=  x_{\mu} ( c_{u+1} + c_u  + c_{u-1} + \cdots + c_{u-t+1})=x_{\mu}c_{u-t+1,u+1} .
\end{align*}

	{\rm (iii)}
	When {$v=1$}, $c_{u+1,u+v}=c_{u+1}$, then
\begin{align*}
	&x_{\mu} c^2_{u+1} \SDIJ(u, {u-t+1})
	  = -x_{\mu}\SDIJ(u, {u-t+1})= -(t+1) x_{\mu} .
\end{align*}
	When {$v >1$},
\begin{align*}
	x_{\mu} c_{u+1}c_{u+1,u+v}\SDIJ(u, {u-t+1})
	& = x_{\mu} c_{u+1}(c_{u+1} + c_{u+2,u+v})
		\SDIJ(u, {u-t+1}) \\
	& = x_{\mu} c_{u+1}^2 \SDIJ(u, {u-t+1})
		 + x_{\mu} c_{u+1}c_{u+2,u+v}
			\SDIJ(u, {u-t+1}) \\
	& \mbox{( since $c_{u+2,u+v}$ commutes with $\SDIJ(u, {u-t+1}) $ )} \\
	&= -(t+1)x_{\mu}
		 + x_{\mu} c_{u+1}
			\SDIJ(u, {u-t+1}) c_{u+2,u+v} \\
	&= -(t+1)x_{\mu}
		+ x_{\mu} c_{u-t+1,u+1}c_{u+2,u+v}.
\end{align*}

{\rm (iv)}
	Rewriting $c_{u-t+1,u}=c_{u-t+1,u+1}-c_{u+1}$, we have
\begin{align*}
&x_{\mu} c_{u+1}c_{u-t+1,u}\SDIJ(u, {u-t+1})
  = x_{\mu} c_{u+1}(c_{u-t+1,u+1}-c_{u+1})\SDIJ(u, {u-t+1}).
\end{align*}
	Since $c^2_{u-t+1,u+1}=-(t+1)$, according to Lemma \ref{ComC-S}{\rm (i)}and the assertion {\rm (ii)},
\begin{align*}
&x_{\mu} c_{u+1}c_{u-t+1,u+1}\SDIJ(u, {u-t+1})
 =x_{\mu}c^2_{u-t+1,u+1}=-(t+1)x_\mu.
\end{align*}
And since {$x_{\mu} c_{u+1}^2
		\SDIJ(u, {u-t+1})=-(t+1)x_\mu,$}
as a consequence,
$$x_{\mu} c_{u+1}c_{u-t+1,u}\SDIJ(u, {u-t+1})=0.$$

	{\rm (v)} Let {$\X=x_{\mu} c_{u+1}c_{u-t+1,u}c_{u+1,u+v}  \SDIJ(u, {u-t+1})$}. \\
If {$v=1$}, then
\begin{align*}
	\X=x_{\mu} c_{u+1} c_{u-t+1,u}
	c_{u+1}  \SDIJ(u, {u-t+1}).
\end{align*}
Because
{$	c_{u+1}c_{u-t+1,u} c_{u+1}=-c_{u-t+1,u}c_{u+1}c_{u+1}=c_{u-t+1,u} $},
we can obtain
\begin{align*}
	\X & =  x_{\mu} c_{u-t+1,u} \SDIJ(u, {u-t+1})
	  =  x_{\mu} (c_{u-t+1,u+1}-c_{u+1}) \SDIJ(u, {u-t+1}).
\end{align*}
According to Lemma \ref{ComC-S}{\rm (i)} and the assertion {\rm (ii)}, we have
\begin{align*}
	\X =(t+1)x_\mu c_{u-t+1,u+1}-x_\mu c_{u-t+1,u+1}=tx_\mu c_{u-t+1,u+1}.
\end{align*}
So the assertion {\rm (v)} is correct in this case.

If {$v >1$}, then
\begin{align*}
	\X & =  x_{\mu} c_{u+1}c_{u-t+1,u}(c_{u+1}+c_{u+2,u+v})  \SDIJ(u, {u-t+1}) \\
	& =  x_{\mu} c_{u+1}c_{u-t+1,u} c_{u+1}
		\SDIJ(u, {u-t+1}) \\
	&\quad +  x_{\mu} c_{u+1}c_{u-t+1,u}c_{u+2,u+v}  \SDIJ(u, {u-t+1}) .
\end{align*}
	As $c_{u+2,u+v}$ commutes with $\SDIJ(u, {u-t+1})$,  assertion {\rm (iv)} leads
$$x_{\mu} c_{u+1}c_{u-t+1,u}c_{u+2,u+v}\cdot \SDIJ(u, {u-t+1})=0.$$
According to the result in case $v=1$, we have
{$\X=tx_\mu c_{u-t+1,u+1}$} and hence the result is proved.
\end{proof}

Similar to Lemma \ref{propupper1}, 
the proof of the following lemmas is straight forward,
so we omit it.
\begin{lem}\label{prop_upper0}Suppose {$\mu \in \CMN(n, r)$},
	and {$u, v, t \in [1, r]$} with {$t \le u$}
	such that  {$s_u, s_{u-1}, \cdots, s_{u-t+1} \in \fS_{\mu}$}.
We can obtain the following assertions
\begin{align*}
	{\rm (i)}\quad &x_{\mu} c_{u+1,u+v}\SDIJ(u, {u-t+1})
	   =
	\left\{
	\begin{aligned}
		& x_{\mu} c_{u-t+1,u+1},
			\mbox{ if } v=1;  \\
		& x_{\mu} c_{u-t+1,u+1}
			+ (t+1)x_{\mu}c_{u+2,u+v}, \mbox{ if } v >1 ;\\
	\end{aligned}
	\right. \\
	{\rm (ii)}\quad
		&x_{\mu}c_{u-t+1,u} \SDIJ(u, {u-t+1}) = t x_{\mu} c_{u-t+1,u+1}; \\
	{\rm (iii)}\quad
		&x_{\mu}  c_{u-t+1,u}c_{u+1,u+v}  \SDIJ(u, {u-t+1})
	  = \left\{
	\begin{aligned}
		& 0,
			\mbox{ if } v=1; \\
		& t x_{\mu}c_{u-t+1,u+1}c_{u+2,u+v}, \mbox{ if } v >1.\\
	\end{aligned}
	\right.
\end{align*}
\end{lem}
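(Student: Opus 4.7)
The plan is to reduce each of (i)--(iii) to the preceding Lemma~\ref{prop_upper1} via two simple decompositions together with standard commutation/anticommutation rules for the Clifford elements. Throughout I abbreviate
\[
\Sigma := 1 + s_u + s_u s_{u-1} + \cdots + s_u s_{u-1}\cdots s_{u-t+1}.
\]
The four facts I intend to use are: (a) the splittings $c_{u+1,u+v} = c_{u+1} + c_{u+2,u+v}$ and $c_{u-t+1,u} = c_{u-t+1,u+1} - c_{u+1}$; (b) Lemma~\ref{ComC-S}(i), which says $c_{u-t+1,u+1}\Sigma = \Sigma c_{u-t+1,u+1}$; (c) the fact that every $c_j$ with $j \ge u+2$ commutes with each $s_i$ for $i \le u$, so $c_{u+2,u+v}$ commutes with $\Sigma$; and (d) the anticommutation $c_{u+1} c_{u-t+1,u} = -c_{u-t+1,u} c_{u+1}$, which is immediate termwise from $c_i c_j = -c_j c_i$ for $i \ne j$.

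For (i), I apply (a) to $c_{u+1,u+v}$. The $c_{u+1}$ summand yields $x_\mu c_{u-t+1,u+1}$ by Lemma~\ref{prop_upper1}(ii), which is the entire answer when $v=1$. When $v>1$, (c) lets me move $c_{u+2,u+v}$ to the right of $\Sigma$, and Lemma~\ref{prop_upper1}(i) then supplies the multiplier $t+1$, producing the additional summand $(t+1)x_\mu c_{u+2,u+v}$. For (ii), I apply (a) to $c_{u-t+1,u}$. By (b) the first piece becomes $x_\mu\Sigma c_{u-t+1,u+1} = (t+1) x_\mu c_{u-t+1,u+1}$ via Lemma~\ref{prop_upper1}(i), while Lemma~\ref{prop_upper1}(ii) gives $x_\mu c_{u-t+1,u+1}$ for the $c_{u+1}$ piece; subtracting leaves $t x_\mu c_{u-t+1,u+1}$. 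For (iii), I split $c_{u+1,u+v}$ again. The $c_{u+1}$ contribution becomes $-x_\mu c_{u+1} c_{u-t+1,u}\Sigma$ by (d), which vanishes by Lemma~\ref{prop_upper1}(iv). When $v>1$, the remaining term is $x_\mu c_{u-t+1,u} c_{u+2,u+v}\Sigma$; here (c) lets me commute $c_{u+2,u+v}$ past $\Sigma$, and then (ii) of the present lemma, already proved, finishes with value $t x_\mu c_{u-t+1,u+1} c_{u+2,u+v}$.

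The main obstacle I anticipate is the sign bookkeeping inside (iii): since $c_{u-t+1,u}$ and $c_{u+2,u+v}$ are \emph{sums} of anticommuting Clifford generators rather than single letters, one has to verify carefully that each claimed commutation or anticommutation really does follow termwise from $c_i c_j = -c_j c_i$. The most delicate point is confirming that $c_{u+2,u+v}$ commutes with $\Sigma$ (it does, because its indices all exceed $u+1$, so the braid-type identities $c_i s_i = s_i c_{i+1}$ never intervene), while $c_{u-t+1,u}$ does \emph{not} commute with $\Sigma$ on its own --- which is precisely why the decomposition $c_{u-t+1,u} = c_{u-t+1,u+1} - c_{u+1}$ in (a) is essential in (ii) and (iii).
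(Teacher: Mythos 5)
Your proposal is correct and follows essentially the same route as the paper's proof: both reduce (i)--(iii) to Lemma~\ref{prop_upper1} via the decompositions $c_{u+1,u+v}=c_{u+1}+c_{u+2,u+v}$ and $c_{u-t+1,u}=c_{u-t+1,u+1}-c_{u+1}$, using Lemma~\ref{ComC-S}(i), the commutation of $c_{u+2,u+v}$ with the sum of $s$'s, and the termwise anticommutation $c_{u+1}c_{u-t+1,u}=-c_{u-t+1,u}c_{u+1}$ to invoke Lemma~\ref{prop_upper1}(iv) in part (iii). No gaps.
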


\begin{lem}\label{prop3F10}
Suppose {$\mu \in \CMN(n, r)$}, and {$u, v, t \in [1, r]$} with {$t \le u$} such that  {$s_u, s_{u+1}, \cdots, s_{u+v-1} \in \fS_{\mu}$}.
We can obtain the following assertions
\begin{align*}
{\rm (i)} \quad & x_{\mu}
		\SAIJ(u, {u+v-1})= (v+1) x_{\mu} ; \\
{\rm (ii)} \quad  & x_{\mu} c_{u}
		\SAIJ(u, {u+v-1})= x_{\mu}c_{u,u+v};\\
{\rm (iii)} \quad  &x_{\mu} c_{u} c_{u+1,u+v}  \SAIJ(u, {u+v-1})= 0; \\
{\rm (iv)} \quad  &x_{\mu} c_{u}c_{u-t+1,u}
		\SAIJ(u, {u+v-1})
	  =
	\left\{
	\begin{aligned}
		& -(v+1) x_{\mu}, \mbox{ if } t=1; \\
		& -(v+1) x_{\mu}
		- x_{\mu}c_{u-t+1,u-1} c_{u,u+v},
			\mbox{ if } t>1; \\
	\end{aligned}
	\right. \\
{\rm (v)} \quad  &x_{\mu} c_{u} c_{u-t+1,u} c_{u+1,u+v} \SAIJ(u, {u+v-1})
  = -v x_{\mu} c_{u,u+v}.
\end{align*}
\end{lem}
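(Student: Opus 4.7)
The plan is to prove Lemma \ref{prop3F10} by mirroring the strategy used in Lemma \ref{prop_upper1}, with the ascending products $s_u s_{u+1} \cdots s_{u+v-1}$ in place of the descending products, and with $c_u$ playing the role that $c_{u+1}$ played before. The hypothesis $s_u, s_{u+1}, \ldots, s_{u+v-1} \in \fS_\mu$ gives $x_\mu s_j = x_\mu$ for $u \le j \le u+v-1$, which is the absorption trick that drives every computation.

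First I would dispatch (i) directly from \eqref{s_on_S}: each summand $s_u s_{u+1} \cdots s_{u+k-1}$ with $0 \le k \le v$ is absorbed by $x_\mu$, producing $v+1$ copies and hence $(v+1)x_\mu$. For (ii), I would push $c_u$ rightward through each summand using the Sergeev relations $c_i s_i = s_i c_{i+1}$ and $c_j s_i = s_i c_j$ for $j \ne i,i+1$; this yields
\[
c_u \cdot s_u s_{u+1} \cdots s_{u+k-1} \;=\; s_u s_{u+1} \cdots s_{u+k-1} \cdot c_{u+k},
\]
so $x_\mu$ converts the telescoped sum into $x_\mu(c_u + c_{u+1} + \cdots + c_{u+v}) = x_\mu c_{u,u+v}$.

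For (iii) I would rewrite $c_{u+1,u+v} = c_{u,u+v} - c_u$ and invoke Lemma \ref{ComC-S}(ii) so that $c_{u,u+v}$ commutes across the sum $T := 1 + s_u + \cdots + s_u s_{u+1}\cdots s_{u+v-1}$. Combining (i), (ii), and the identity $c_{u,u+v}^2 = -(v+1)$ (which follows because cross terms anticommute in pairs and $c_i^2 = -1$), both contributions cancel and the answer is $0$. For (iv), the case $t=1$ reduces to $x_\mu c_u^2 T = -x_\mu T = -(v+1)x_\mu$; for $t > 1$ I would split $c_{u-t+1,u} = c_{u-t+1,u-1} + c_u$. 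The key observation is that $c_{u-t+1,u-1}$ anticommutes with $c_u$ (so passing $c_u$ through produces a sign) but commutes with every $s_j$ with $j \ge u$ and hence with $T$, so after applying (ii) we obtain $-x_\mu c_{u-t+1,u-1} c_{u,u+v}$ plus the $-(v+1)x_\mu$ coming from the $c_u^2$ term.

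Finally, (v) will follow by combining the previous items: I would again write $c_{u+1,u+v}\,T = T c_{u,u+v} - c_u T$, apply (iv) to $x_\mu c_u c_{u-t+1,u} T$ on the left factor, and use $c_u c_{u-t+1,u} c_u = c_{u-t+1,u-1} - c_u$ (which follows from the anticommutation of $c_u$ with $c_{u-t+1,u-1}$ together with $c_u^2 = -1$) for the right piece. After substituting, the $\pm(v+1)x_\mu c_{u-t+1,u-1}$ contributions cancel, leaving only $-v\,x_\mu c_{u,u+v}$ as claimed; the $t=1$ case is cleaner since $c_{u-t+1,u-1}$ is absent. The main obstacle throughout is sign bookkeeping: one must keep careful track of anticommutations of the odd Clifford generators when sliding $c_u$ past $c_{u-t+1,u-1}$ and when commuting $c_{u,u+v}$ across $T$; once the commutation/anticommutation pattern is organized cleanly, each identity reduces to the same three ingredients — absorption by $x_\mu$, Lemma \ref{ComC-S}, and the squaring identities $c_u^2 = -1$, $c_{u,u+v}^2 = -(v+1)$.
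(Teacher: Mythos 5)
Your proposal is correct and follows essentially the same route as the paper's proof: absorption of the $s_j$'s by $x_\mu$, pushing $c_u$ rightward via $c_is_i=s_ic_{i+1}$ to get (ii), the splittings $c_{u+1,u+v}=c_{u,u+v}-c_u$ and $c_{u-t+1,u}=c_{u-t+1,u-1}+c_u$ combined with Lemma~\ref{ComC-S} and the identities $c_u^2=-1$, $c_{u,u+v}^2=-(v+1)$. The only (immaterial) difference is that in (v) you invoke (iv) as a packaged result for the $c_{u,u+v}$-piece, whereas the paper re-expands both pieces from scratch; the cancellation of the $(v+1)x_\mu c_{u-t+1,u-1}$ terms is identical.
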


\begin{lem}\label{prop3F10.0}
	For {$\mu \in \CMN(n, r)$}, suppose
	{$u, v, t \in [1, r]$} with {$t \le u$}
	such that {$s_u, s_{u+1}, \cdots, s_{u+v-1} \in \fS_{\mu}$}.
	The following assertions can be obtained
\begin{align*}
	{\rm (i)}\quad
		&x_{\mu}  c_{u+1,u+v} \SAIJ(u, {u+v-1}) = v x_{\mu} c_{u,u+v} ; \\
	{\rm (ii)}\quad &x_{\mu} c_{u-t+1,u}
		\SAIJ(u, {u+v-1})
	  =
	\left\{
	\begin{aligned}
		& x_{\mu} c_{u,u+v}, \mbox{ if } t=1; \\
		& x_{\mu} c_{u,u+v}+ (v+1) x_{\mu}c_{u-t+1,u-1},
			\mbox{ if } t>1; \\
	\end{aligned}
	\right. \\
	{\rm (iii)}\quad
		&x_{\mu}  c_{u-t+1,u}c_{u+1,u+v}  \SAIJ(u, {u+v-1})
		  =
	\left\{
	\begin{aligned}
		& 0,
			\mbox{ if } t=1; \\
		&  v x_{\mu}c_{u-t+1,u-1} c_{u,u+v},
			\mbox{ if } t>1.
	\end{aligned}
	\right.
\end{align*}
\end{lem}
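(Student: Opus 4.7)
The plan is to treat this lemma as the exact analogue of Lemma~\ref{prop_upper0} but with increasing indices, so the proof runs by reducing each formula to the already proved statements in Lemma~\ref{prop3F10}, combined with the commutation identity in Lemma~\ref{ComC-S}(ii) and the elementary decompositions $c_{u+1,u+v}=c_{u,u+v}-c_u$ and $c_{u-t+1,u}=c_{u-t+1,u-1}+c_u$. Let me write $\Sigma=1+s_u+s_us_{u+1}+\cdots+s_us_{u+1}\cdots s_{u+v-1}$ for the trailing sum throughout.

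For (i), I would split $c_{u+1,u+v}=c_{u,u+v}-c_u$. Lemma~\ref{ComC-S}(ii) says $c_{u,u+v}$ commutes with $\Sigma$, so $x_\mu c_{u,u+v}\Sigma=x_\mu\Sigma c_{u,u+v}=(v+1)x_\mu c_{u,u+v}$ by Lemma~\ref{prop3F10}(i). The remaining piece $x_\mu c_u\Sigma$ equals $x_\mu c_{u,u+v}$ by Lemma~\ref{prop3F10}(ii). Subtracting yields $vx_\mu c_{u,u+v}$.

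For (ii), the case $t=1$ is just Lemma~\ref{prop3F10}(ii). For $t>1$, decompose $c_{u-t+1,u}=c_{u-t+1,u-1}+c_u$; the $c_u$ summand produces $x_\mu c_{u,u+v}$ as before, and since every $s_j$ with $j\in[u,u+v-1]$ is disjoint from the index set $[u-t+1,u-1]$, the element $c_{u-t+1,u-1}$ commutes with $\Sigma$, so that $x_\mu c_{u-t+1,u-1}\Sigma=(v+1)x_\mu c_{u-t+1,u-1}$ by Lemma~\ref{prop3F10}(i). For (iii), the case $t=1$ is immediate from Lemma~\ref{prop3F10}(iii). For $t>1$, the same decomposition kills the $c_u$ contribution via Lemma~\ref{prop3F10}(iii), and the remaining term $x_\mu c_{u-t+1,u-1}c_{u+1,u+v}\Sigma$ is reduced by pushing $c_{u-t+1,u-1}$ all the way to the right, past $c_{u+1,u+v}$ and then past $\Sigma$, and finally applying part (i) of this lemma.

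The main obstacle is sign tracking in the last step: $c_{u-t+1,u-1}$ and $c_{u+1,u+v}$ are both odd (sums of odd Clifford generators with disjoint index sets), hence anti-commute, and similarly $c_{u-t+1,u-1}$ anti-commutes with $c_{u,u+v}$. Swapping them produces two sign flips that cancel, which is what makes the stated right-hand side $vx_\mu c_{u-t+1,u-1}c_{u,u+v}$ come out with a positive coefficient rather than a negative one. Everything else is routine bookkeeping of disjoint-support commutations with $\Sigma$ and with $x_\mu$ (the latter is never needed because every manipulation takes place strictly to the right of $x_\mu$).
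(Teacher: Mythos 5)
Your proposal is correct and follows essentially the same route as the paper: the same decompositions $c_{u+1,u+v}=c_{u,u+v}-c_u$ and $c_{u-t+1,u}=c_{u-t+1,u-1}+c_u$, reduction to Lemma~\ref{prop3F10} via the commutations of Lemma~\ref{ComC-S}(ii), and reduction of part (iii) to part (i). Your explicit tracking of the two cancelling sign flips when moving $c_{u-t+1,u-1}$ past $c_{u+1,u+v}$ and $c_{u,u+v}$ is in fact slightly more careful than the paper's own write-up of that step.
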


\section{Some Multiplication Formulas in {$\Qnr$} }\label{sec_mul_qschur}

As the BLM realizations of quantum group $U_q(\mathfrak{gl}_n)$ in \cite{BLM}
and quantum supergroup $U_q(\mathfrak{gl}_{m|n})$ in \cite{DG},
the multiplication formulas of some special elements in the $q$-Schur algebras
or $q$-Schur superalgebras are the main ingredients for whole procedures. In this section,
 We shall focus on the calculation
of  multiplication formulas for some elements
in the standard basis  {$\{ \phi_{M} \where M \in \MNZ(n,r)\} $},  which will be widely used in Section 5.
More precisely,
 fix {$A \in \MNZ(n,r)$},
 denote {$\ro(A)=\lambda$}, {$\co(A)=\mu$}.
 We will  calculate {$\phi_X \phi_A$} for the following three cases:
\begin{itemize}
	\item {${X} ={(\lambda-E_{h+1, h+1}+ E_{h, h+1}|O)} $}
		 or  {${(\lambda-E_{h+1, h+1}| E_{h, h+1} )}$}, for  {$ 1 \le h \le n-1 $};
	\item {$X = { (\lambda-E_{h,h}|E_{h,h}) }$} or  {${ (\lambda|O) }$}, for  {$ 1 \le h \le n$};
	\item  {$X = (\lambda-E_{h,h}+E_{h+1,h}|O)$} or {$(\lambda-E_{h,h}|E_{h+1,h})$},  for {$ 1 \le h \le n-1 $}.
\end{itemize}
A straightforward computation shows {$\co(X) = \lambda$} for these three cases.
Denote {$\ro(X)=\xi$}.
According to \cite{DW1},  we have
\begin{align}
\phi_X \phi_A=
	\delta_{\co(X),\ro(A)}
	\sum_{\substack{
	M\in\MNZ(n,r) \\
	\ro({M})=\ro(X), \co({M})=\co(A)}
	}f_M\phi_M,
	\quad f_M\in \CC. \label{get_fM}
\end{align}
The purpose of this section is to determine {$f_M$} for each {$M$}
corresponding to the given {$X$} and {$A$}.
For any {$h \in \HCR$},
\begin{align*}
&{\phi}_{X} {\phi}_{A} (x_{\mu}h)
 =  (x_{\xi} {d_X} c_{X}
			\sum _{{\sigma} \in \D_{{\nu}_{_X}} \cap \fS_{\lambda}} {\sigma} )
	({{d_A}} c_{A}  \sum _{{\sigma} \in \D_{{\nu}_{_A}} \cap \fS_{\mu}} {\sigma}) \cdot h,
\end{align*}
so the coefficients {$f_M$} are determined by the following equation
\begin{align}
(x_{\xi} {d_X} c_{X}
			\sum _{{\sigma} \in \D_{{\nu}_{_X}} \cap \fS_{\lambda}} {\sigma} )
	({ {d_A}} c_{A}  \sum _{{\sigma} \in \D_{{\nu}_{_A}} \cap \fS_{\mu}}  {\sigma})
 = \sum_{M \in \MNZ(n,r)} f_M T_M. \label{eq_gB}
\end{align}

The factors of {$c_A$} that affect  the result
are in position {$(h,k)$} and {$(h+1,k)$}.
Referring to  \eqref{c_alpha} and \eqref{c_A}, simplify the expression of
{$c_A$} to be
\begin{align}
	&c_A  = (\clubsuit) \cdot  c_{\gamma} \cdot (\clubsuit\clubsuit)  \label{suitsig}
\end{align}
where the most important part is
{$	c_{\gamma} = c^{\SOE{a}_{h,k}}_{ {\tilde{a}}_{h-1,k}+1, {\tilde{a}}_{h,k}}
		\cdot c^{\SOE{a}_{h+1,k}}_{ {\tilde{a}}_{h,k}+1, {\tilde{a}}_{h+1,k}} $}.

 For the given {$h$}, using the notations in Proposition \ref{prop_shift}, set $a_k=\sum^{k-1}_{i=1}a_{h+1, i}$ and $b_k=\sum^{n}_{i=k+1}a_{h, i}$ for $1\leq k\leq n$.

\subsection{The case for the Upper Triangular Matrix $\ABSUM{X}$}
\ \\
When {$X = (\lambda -E_{h+1, h+1} + E_{h, h+1}| O)$} or {$(\lambda-E_{h+1, h+1} | E_{h, h+1})$},
we have
\begin{align*}
&\ro(X) = \xi = \lambda - \bs{\ep}_{h+1}+ \bs{\ep}_h,\,d_X = 1 \text{ and },\\
	&\nu_{X} = (\lambda_1, \cdots, \lambda_h, 1, \lambda_{h+1}-1 , \lambda_{h+2}, \cdots, \lambda_n).
\end{align*}
Then
\begin{align*}
	 \D_{{\nu}_{X}} \cap \fS_{{\lambda}}   = \{ 1, s_{{\tilde{\lambda}_{h}+1}},
		s_{{\tilde{\lambda}_{h}+1}} s_{{\tilde{\lambda}_{h}+2}},
		\cdots,
		s_{{\tilde{\lambda}_{h}+1}} s_{{\tilde{\lambda}_{h}+2}} \cdots s_{{\tilde{\lambda}_{h+1}-1}}
	\}
\end{align*}
and
\begin{align*}
 \sum _{{\sigma} \in \D_{{\nu}_{_X}} \cap \fS_{\lambda}} {\sigma}
	&= 1 +  s_{\tilde{\lambda}_{h}+1}+
		s_{\tilde{\lambda}_{h}+1} s_{\tilde{\lambda}_{h}+2} +
		\cdots +
		s_{\tilde{\lambda}_{h}+1} s_{\tilde{\lambda}_{h}+2} \cdots s_{\tilde{\lambda}_{h+1}-1} \\
&= \SAIJ({\tilde{\lambda}_{h}+1}, {\tilde{\lambda}_{h+1}-1}) .
\end{align*}
Let
\begin{align*}
z	&=  (x_{\xi} d_X  c_{X}  \sum _{{\sigma} \in \D_{{\nu}_{_X}} \cap \fS_{\lambda}} {\sigma})
	 \cdot 	(  d_A  c_{A}  \sum _{{\sigma} \in \D_{{\nu}_{_A}} \cap \fS_{\mu}} {\sigma}) \\
	&=
		\sum_{j=0}^{{\lambda}_{h+1}-1}
		x_{\xi}  c_{X}
		s_{\tilde{\lambda}_{h}+1} s_{\tilde{\lambda}_{h}+2} \cdots s_{\tilde{\lambda}_{h}+j}
		 d_A  c_{A}
		\sum _{{\sigma} \in \D_{{\nu}_{_A}} \cap \fS_{\mu}} {\sigma}
	= \sum_{k=1}^n  z_k,
\end{align*}
where
{$ s_{\tilde{\lambda}_{h}+1} s_{\tilde{\lambda}_{h}+2} \cdots s_{\tilde{\lambda}_{h}+j} $} is  the identity
when  	{$ j = 0 $}, and
\begin{align}
	z_k&=
		\sum_{j=a_{k}}^{a_{k+1} - 1}
		x_{\xi}  c_{X}
		s_{\tilde{\lambda}_{h}+1} s_{\tilde{\lambda}_{h}+2} \cdots s_{\tilde{\lambda}_{h}+j}
		 d_A c_{A}
		\sum _{{\sigma} \in \D_{{\nu}_{_A}} \cap \fS_{\mu}} {\sigma} . \label{eq_zk}
\end{align}

By calculating $z_k$, one could get the multiplication formulas of
\begin{align*}
	\phi_{(\lambda-E_{h+1, h+1}+ E_{h, h+1}|O)}\phi_{A},
	\qquad \phi_{(\lambda-E_{h+1, h+1}| E_{h, h+1} )}\phi_{A}.
\end{align*}
Because  {$d_{(\SE{A} + E_{h,k}-E_{h+1,k}|\SO{A})} = d_{(\SE{A} |\SO{A} + E_{h,k}-E_{h+1,k})} $}, briefly, we use {$d_{A^+_{h,k}} $} to denote them.
So does {${\nu}_{A^+_{h,k}} $}.
\begin{lem}\label{sc+}
	For  {$A=(a_{i,j}) \in \MNZ(n, r)$}, and {$h\in [1,n-1], k \in [1,n]$},
we can obtain
\begin{equation}\label{form-sc}
\SAIJ( {{\tilde{a}_{h, k}} + 1} , {{\tilde{a}_{h, k}} + a_{h+1, k} - 1})c_A \sum_{\sigma\in\mathcal{D}_{\nu_A}\cap \mathfrak{S}_\mu}\sigma
	=c_A\SDIJ({{\tilde{a}_{h, k}}}, {{\tilde{a}_{h, k}} - a_{h, k} + 1}) \sum_{\sigma\in\mathcal{D}_{\nu_{A^+_{h,k}}}\cap \mathfrak{S}_\mu}\sigma.
\end{equation}
\end{lem}
\begin{proof}
Use the notations in \eqref{suitsig}.
Observing the expression of $\SAIJ( {{\tilde{a}_{h, k}} + 1} , {{\tilde{a}_{h, k}} + a_{h+1, k} - 1})$,
since all those $c_i$'s appearing in
$(\clubsuit)$, $(\clubsuit\clubsuit)$ and {$ c^{\SOE{a}_{h,k}}_{ {\tilde{a}}_{h-1,k}+1, {\tilde{a}}_{h,k}} $}
 are with $i\leq\tilde{a}_{h,k}$ or $i>\tilde{a}_{h+1,k}$,
 they commute with $\SAIJ( {{\tilde{a}_{h, k}} + 1} , {{\tilde{a}_{h, k}} + a_{h+1, k} - 1})$.
Lemma \ref{ComC-S}{\rm (ii)} implies
\begin{align*}
\SAIJ( {{\tilde{a}_{h, k}} + 1} , {{\tilde{a}_{h, k}} + a_{h+1, k} - 1})c^{\SOE{a}_{h+1,k}}_{ {\tilde{a}}_{h,k}+1, {\tilde{a}}_{h+1,k}}
 	=c^{\SOE{a}_{h+1,k}}_{ {\tilde{a}}_{h,k}+1, {\tilde{a}}_{h+1,k}}\SAIJ( {{\tilde{a}_{h, k}} + 1} , {{\tilde{a}_{h, k}} + a_{h+1, k} - 1}),
\end{align*}
which means {$\SAIJ( {{\tilde{a}_{h, k}} + 1} , {{\tilde{a}_{h, k}} + a_{h+1, k} - 1})c_A  = c_A \SAIJ( {{\tilde{a}_{h, k}} + 1} , {{\tilde{a}_{h, k}} + a_{h+1, k} - 1})$}.

By Proposition \ref{prop_splus}, we have
\begin{align*}
\SAIJ( {{\tilde{a}_{h, k}} + 1} , {{\tilde{a}_{h, k}} + a_{h+1, k} - 1})\sum_{\sigma\in\mathcal{D}_{\nu_A}\cap \mathfrak{S}_\mu}\sigma
=\SDIJ({{\tilde{a}_{h, k}}}, {{\tilde{a}_{h, k}} - a_{h, k} + 1})\sum_{\sigma\in\mathcal{D}_{\nu_{A^+_{h,k}}}\cap \mathfrak{S}_\mu}\sigma.
\end{align*}
Therefore the equation \eqref{form-sc} is followed.
\end{proof}

\begin{thm}\label{upper0}
For the given $A \in \MNZ(n,r)$ and $\lambda = \ro(A)$,  we have
\begin{equation}\label{U_0-A}
\begin{aligned}
\phi_{(\lambda+E_{h,h+1}-E_{h+1,h+1}|O) } \phi_A&=
\sum_{1\leq k\leq n,\atop a_{h+1,k}\geq 1}[(\SEE{a}_{h,k}+1)\phi_{(\SE{A} + E_{h,k}-E_{h+1,k}|\SO{A})} \\
& \qquad\qquad  +\phi_{(\SE{A} |\SO{A} + E_{h,k}-E_{h+1,k})}],
\end{aligned}
\end{equation}
where {$(\SE{A} + E_{h,k}-E_{h+1,k}|\SO{A}), (\SE{A} |\SO{A} + E_{h,k}-E_{h+1,k}) \in \MNZ(n,r)$}.
\end{thm}
\begin{proof}

When $X=(\lambda+E_{h,h+1}-E_{h+1,h+1}|O) $,
we have $c_X=1$.
One may prove the result by calculating {$z_k$} in \eqref{eq_zk}.

Applying Proposition \ref{prop_shift} and equation \eqref{s_on_S},
	it follows
\begin{align*}
z_k
&=  x_\xi(s_{\tilde{\lambda}_h}s_{\tilde{\lambda}_h-1}
		\cdots  s_{\tilde{\lambda}_h-b_k+1})
		d_{A^+_{h,k}} \SAIJ( {{\tilde{a}_{h, k}} + 1} , {{\tilde{a}_{h, k}} + a_{h+1, k} - 1}) c_A
		\sum_{\sigma\in\mathcal{D}_{\nu_A}\cap\mathfrak{S}_\mu}  \sigma  \\
&=  x_\xi  d_{A^+_{h,k}} \SAIJ( {{\tilde{a}_{h, k}} + 1} , {{\tilde{a}_{h, k}} + a_{h+1, k} - 1}) c_A
	\sum_{\sigma\in\mathcal{D}_{\nu_A}\cap\mathfrak{S}_\mu}\sigma.
\end{align*}
By {\cite[Corollary 3.5]{DW1}}, we have
\begin{align*}
x_\xi  d_{A^+_{h,k}}
=  (\sum_{\tau\in \mathcal{D}^{-1}_{\nu_{(A^+_{h,k})^T}} \cap \mathfrak{S}_\xi}\tau)
	d_{A^+_{h,k}} 	x_{\nu_{A^+_{h,k}}},
\end{align*}
and according to Lemma \eqref{sc+},
\begin{align*}
z_k
&=  (\sum_{\tau\in \mathcal{D}^{-1}_{\nu_{(A^+_{h,k})^T}} \cap \mathfrak{S}_\xi}\tau)
	d_{A^+_{h,k}} 	x_{\nu_{A^+_{h,k}}}
	c_A\SDIJ({{\tilde{a}_{h, k}}}, {{\tilde{a}_{h, k}} - a_{h, k} + 1})\sum_{\sigma\in\mathcal{D}_{\nu_{A^+_h,k}}\cap\mathfrak{S}_\mu}\sigma .
\end{align*}
So what we need to consider is $c_A\SDIJ({{\tilde{a}_{h, k}}}, {{\tilde{a}_{h, k}} - a_{h, k} + 1})$.
Since every   $s_i$ appears  in $\SDIJ({{\tilde{a}_{h, k}}}, {{\tilde{a}_{h, k}} - a_{h, k} + 1})$ is with $\tilde{a}_{h-1, k}+1\leq i \leq\tilde{a}_{h, k}$,
every $s_j$ appearing in $(\clubsuit) $ and $(\clubsuit\clubsuit)$
	is with $j<\tilde{a}_{h-1}+1$ or $j>\tilde{a}_{h,k}+1$,
	which means they commute  with $\SDIJ({{\tilde{a}_{h, k}}}, {{\tilde{a}_{h, k}} - a_{h, k} + 1})$.

Case 1: $\SOE{a}_{h,k}=0$ and $\SOE{a}_{h+1,k}=0$.  In this case
{$c_{\gamma} = 1$},
$a_{h,k}=\SEE{a}_{h,k}$, and $(\SE{A} |\SO{A} + E_{h,k}-E_{h+1,k})\notin \MNZ(n,r)$, then
$c_{(\SE{A} + E_{h,k}-E_{h+1,k}|\SO{A})}=c_A$ and
then
$$
x_{\nu_{A^+_{h,k}}}c_A
	\SDIJ({{\tilde{a}_{h, k}}}, {{\tilde{a}_{h, k}} - a_{h, k} + 1})
	=(\SEE{a}_{h,k}+1)x_{\nu_{A^+_{h,k}}}c_{(\SE{A} + E_{h,k}-E_{h+1,k}|\SO{A})}.
$$

Case 2: $\SOE{a}_{h,k}=1$ and $\SOE{a}_{h+1,k}=0$.
Then {$c_{\gamma} = c_{(\tilde{a}_{h-1,k}+1,\tilde{a}_{h,k})}$},
 $a_{h,k}=\SEE{a}_{h,k}+1$ and $(\SE{A} |\SO{A} + E_{h,k}-E_{h+1,k})\notin \MNZ(n,r)$.
From Lemma \ref{prop_upper0}{\rm (ii)}, we have
$$x_{\nu_{A^+_{h,k}}}c_{\gamma}\SDIJ({{\tilde{a}_{h, k}}}, {{\tilde{a}_{h, k}} - a_{h, k} + 1})
=(\SEE{a}_{h,k}+1)x_{\nu_{A^+_{h,k}}}c_{(\tilde{a}_{h-1,k}+1,\tilde{a}_{h,k}+1)},$$
which means
$$
x_{\nu_{A^+_{h,k}}}c_A \SDIJ({{\tilde{a}_{h, k}}}, {{\tilde{a}_{h, k}} - a_{h, k} + 1}) =(\SEE{a}_{h,k}+1)x_{\nu_{A^+_{h,k}}}c_{(\SE{A} + E_{h,k}-E_{h+1,k}|\SO{A})},
$$

Case 3: $\SOE{a}_{h,k}=0$ and $\SOE{a}_{h+1,k}=1$. In this situation
{$c_{\gamma} = c_{(\tilde{a}_{h,k}+1,\tilde{a}_{h+1,k})} $},
$a_{h,k}=\SEE{a}_{h,k}$ and $(\SE{A} |\SO{A} + E_{h,k}-E_{h+1,k})\in \MNZ(n,r)$.
 From Lemma \ref{prop_upper0}{\rm (i)}, we have
 $$x_{\nu_{A^+_{h,k}}}c_{\gamma} \SDIJ({{\tilde{a}_{h, k}}}, {{\tilde{a}_{h, k}} - a_{h, k} + 1})
 =x_{\nu_{A^+_{h,k}}}c_{(\tilde{a}_{h-1,k}+1,\tilde{a}_{h,k})}+
 (\SEE{a}_{h,k}+1)x_{\nu_{A^+_{h,k}}}c_{(\tilde{a}_{h,k}+2,\tilde{a}_{h+1,k})},$$
 which means
\begin{align*}
&x_{\nu_{A^+_{h,k}}}c_A \SDIJ({{\tilde{a}_{h, k}}}, {{\tilde{a}_{h, k}} - a_{h, k} + 1}) \\
&=	x_{\nu_{A^+_{h,k}}}c_{(\SE{A} |\SO{A} + E_{h,k}-E_{h+1,k})}
	+ (\SEE{a}_{h,k}+1)x_{\nu_{A^+_{h,k}}}c_{(\SE{A} + E_{h,k}-E_{h+1,k}|\SO{A})}.
\end{align*}

 Case 4: $\SOE{a}_{h,k}=1$ and $\SOE{a}_{h+1,k}=1$.
Then {$c_{\gamma} = c_{(\tilde{a}_{h-1,k}+1,\tilde{a}_{h,k})} c_{(\tilde{a}_{h,k}+1,\tilde{a}_{h+1,k})} $},
   $a_{h,k}=\SEE{a}_{h,k}+1$,
 and $(\SE{A} |\SO{A} + E_{h,k}-E_{h+1,k})\notin \MNZ(n,r)$. Specially, if $a_{h+1,k}=1$, then $ (\SE{A} + E_{h,k}-E_{h+1,k}|\SO{A})\notin \MNZ(n,r)$. According to Lemma \ref{prop_upper0}{\rm (iii)}, we have
\begin{align*}
&x_{\nu_{A^+_{h,k}}}
c_{\gamma}
 \SDIJ({{\tilde{a}_{h, k}}}, {{\tilde{a}_{h, k}} - a_{h, k} + 1}) \\
 &=\left\{
\begin{aligned}
&0, \mbox{ if } a_{h+1,k}=1;\\
&(\SEE{a}_{h,k}+1)x_{\nu_{A^+_{h,k}}}c_{(\tilde{a}_{h-1,k}+1,\tilde{a}_{h,k}+1)}
c_{(\tilde{a}_{h,k}+2,\tilde{a}_{h+1,k})}, \mbox{ if } a_{h+1,k}>1.
\end{aligned}\right.
\end{align*}
which means
{$x_{\nu_{A^+_{h,k}}}c_A \SDIJ({{\tilde{a}_{h, k}}}, {{\tilde{a}_{h, k}} - a_{h, k} + 1}) =(\SEE{a}_{h,k}+1)x_{\nu_{A^+_{h,k}}}c_{(\SE{A} + E_{h,k}-E_{h+1,k}|\SO{A})}$}.

 In summary, the assertion in this proposition is derived.
\end{proof}


\begin{thm}\label{U1A}
For the given $A\in \MNZ(n,r)$  and  $\lambda=\ro(A)$, we have
\begin{equation}\label{U1A'}
\begin{aligned}
\phi_{(\lambda-E_{h+1,h+1}|E_{h,h+1})} \phi_A
	&= \sum_{1 \leq k \leq n, \atop a_{h+1,k} \geq 1}
		[(-1)^{ \SOE{\tilde{a}}_{h+1,k}} (\SEE{a}_{h,k}+1)\phi_{(\SE{A} + E_{h,k}|\SO{A} - E_{h+1,k})}\\
		&\quad\quad
		+(-1)^{ \SOE{\tilde{a}}_{h,k}}\phi_{(\SE{A} - E_{h+1,k}|\SO{A} + E_{h,k})}],
\end{aligned}
\end{equation}
where {$(\SE{A} + E_{h,k}|\SO{A} - E_{h+1,k}), (\SE{A}-E_{h+1,k}|\SO{A} + E_{h,k})\in\MNZ(n,r)$}.
\end{thm}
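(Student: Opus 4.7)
The plan is to mimic the proof of Theorem~\ref{U0A} essentially verbatim, carrying along the single extra Clifford factor $c_X=c_{\tilde\lambda_h+1}$ arising from $\SO{X}=E_{h,h+1}$. Since $\ABSUM{X}$ coincides with the matrix appearing in Theorem~\ref{U0A}, we again have $d_X=1$, the same $\nu_X$, the same $\xi=\ro(X)$, and the same expansion of $\sum_{\sigma\in\D_{\nu_X}\cap\fS_\lambda}\sigma$; so I would decompose $z=\sum_{k=1}^n z_k$ as in \eqref{eq_zk} with every summand now carrying the additional prefactor $c_{\tilde\lambda_h+1}$.

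For each $k$, I would reduce $z_k$ by the following chain of elementary moves: (a) push $c_{\tilde\lambda_h+1}$ rightward through $s_{\tilde\lambda_h+1}\cdots s_{\tilde\lambda_h+a_k+p}$ via $c_is_i=s_ic_{i+1}$, so that $c_{\tilde\lambda_h+a_k+p+1}$ sits just to the left of $d_A$; (b) apply Proposition~\ref{propR3}(i) to transport this Clifford generator through $d_A$, producing $d_A c_{\tilde a_{h,k}+p+1}$; (c) apply Proposition~\ref{prop_shift}(i) to move the remaining $s$-chain across $d_A$; (d) absorb the leading factor $s_{\tilde\lambda_h}\cdots s_{\tilde\lambda_h-b_k+1}$ into $x_\xi$ via \eqref{s_on_S}, noting that every such $s_i$ lies in $\fS_\xi$; (e) slide $c_{\tilde a_{h,k}+p+1}$ back to the left of $s_{\tilde a_{h,k}+1}\cdots s_{\tilde a_{h,k}+p}$ using $s_ic_{i+1}=c_is_i$, extracting a $p$-independent prefactor $c_{\tilde a_{h,k}+1}$. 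Summing over $p$, applying Lemma~\ref{propMtoP} to convert $(*)c_A$ into $c_A(*^+)$, and using \cite[Cor.~3.5]{DW1} to move $x_\xi$ past $d_{A^+_{h,k}}$, reduces the problem to understanding $x_{\nu_{A^+_{h,k}}}\,c_{\tilde a_{h,k}+1}\,c_A\,(*^+)$.

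To rewrite this as a combination of $x_{\nu_M}c_M$ for the two target matrices $M^{(a)}=(\SE{A}+E_{h,k}|\SO{A}-E_{h+1,k})$ and $M^{(b)}=(\SE{A}-E_{h+1,k}|\SO{A}+E_{h,k})$, I would decompose $c_A=(\clubsuit)c_\gamma(\clubsuit\clubsuit)$ as in \eqref{suitsig} and split into the four parity cases $(\SOE{a}_{h,k},\SOE{a}_{h+1,k})\in\{0,1\}^2$. In every case, $(\clubsuit)$ and $(\clubsuit\clubsuit)$ commute with both $(*^+)$ and $x_{\nu_{A^+_{h,k}}}$, so they can be moved aside; anticommuting $c_{\tilde a_{h,k}+1}$ through $(\clubsuit)$ then contributes the base sign $(-1)^{\tilde a^1_{h-1,k}}$ coming from the odd Clifford factors indexed by positions strictly preceding $(h,k)$ in column-major order. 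The residual factor $x_{\nu_{A^+_{h,k}}}\,c_{\tilde a_{h,k}+1}\,c_\gamma\,(*^+)$ is then handled by direct appeal to Lemma~\ref{prop_upper1}: part~(ii) in case $(0,0)$ produces only the $M^{(b)}$-summand; part~(iii) in case $(0,1)$ produces both an $M^{(a)}$-summand (with coefficient $\SEE{a}_{h,k}+1$ and an extra sign flip) and an $M^{(b)}$-summand; part~(iv) in case $(1,0)$ gives zero, matching that both $M^{(a)},M^{(b)}\notin\MNZ(n,r)$; part~(v) in case $(1,1)$ produces only an $M^{(a)}$-summand with coefficient $\SEE{a}_{h,k}+1$.

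The main obstacle will be the careful sign bookkeeping in the last step. One must verify that, in each of the four parity configurations, the combination of the base sign $(-1)^{\tilde a^1_{h-1,k}}$ with the sign produced by Lemma~\ref{prop_upper1} and the identities $\tilde a^1_{h,k}-\tilde a^1_{h-1,k}=\SOE{a}_{h,k}$ and $\tilde a^1_{h+1,k}-\tilde a^1_{h-1,k}=\SOE{a}_{h,k}+\SOE{a}_{h+1,k}$ reproduces exactly $(-1)^{\tilde a^1_{h+1,k}}$ on the $M^{(a)}$-summand and $(-1)^{\tilde a^1_{h,k}}$ on the $M^{(b)}$-summand. The complementary identification of each product $(\clubsuit)c_{\cdots}(\clubsuit\clubsuit)$ with the canonical $c_{M^{(a)}}$ or $c_{M^{(b)}}$ is then a direct computation using the fact that $\tilde m_{i,j}=\tilde a_{i,j}$ outside $(h,k)$ together with $\tilde m_{h,k}=\tilde a_{h,k}+1$.
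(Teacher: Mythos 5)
Your plan follows the paper's own proof essentially step for step: the same reduction of $z_k$ via Proposition~\ref{propR3}(i) and Proposition~\ref{prop_shift}(i), the same extraction of the $p$-independent factor $c_{\tilde a_{h,k}+1}$, the same use of Lemma~\ref{propMtoP} and \cite[Cor.~3.5]{DW1}, and the same four-way parity case analysis resolved by Lemma~\ref{prop_upper1}(ii)--(v), with each case matched to the correct target matrices. The only difference is a cosmetic one in where the anticommutation sign from $(\clubsuit)$ is booked, and you correctly identify the sign verification as the remaining routine check, so the proposal is correct and matches the paper's argument.
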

\begin{proof}
When $X=(\lambda-E_{h+1,h+1}|E_{h,h+1})$,
we have   $c_X=c_{\tilde{\lambda}_h+1}$ and
\begin{align*}
	z_k&=
		\sum_{j=a_{k}}^{a_{k+1} - 1}
		x_{\xi}  c_{\tilde{\lambda}_h+1}
		s_{\tilde{\lambda}_{h}+1} s_{\tilde{\lambda}_{h}+2} \cdots s_{\tilde{\lambda}_{h}+j}
		 d_A c_{A}
		\sum _{{\sigma} \in \D_{{\nu}_{_A}} \cap \fS_{\mu}} {\sigma}  \\
	&=
		\sum_{j=a_{k}}^{a_{k+1} - 1}
		x_{\xi}
		s_{\tilde{\lambda}_{h}+1} s_{\tilde{\lambda}_{h}+2} \cdots s_{\tilde{\lambda}_{h}+j}
		c_{\tilde{\lambda}_h+j + 1}
		 d_A c_{A}
		\sum _{{\sigma} \in \D_{{\nu}_{_A}} \cap \fS_{\mu}} {\sigma} .
\end{align*}

For each {$j$} with {$a_{k} \le j \le a_{k+1} - 1$}, set {$p_j = j - a_k$}.
Proposition \ref{propR3} implies
\begin{align*}
c_{\tilde{\lambda}_h+j + 1}d_A=d_Ac_{\tilde{a}_{h,k}+p_j+1}.
\end{align*}
By  Proposition \ref{prop_shift},
\begin{align*}
x_{\xi} s_{\tilde{\lambda}_{h}+1} s_{\tilde{\lambda}_{h}+2} \cdots s_{\tilde{\lambda}_{h}+j} d_A
&=
 x_{\xi} s_{\tilde{\lambda}_{h}} s_{\tilde{\lambda}_{h}-1}\cdots s_{\tilde{\lambda}_{h}-b_k+1}
		 {d_{A^+_{h,k}}}
  	s_{\tilde{a}_{h,k}+1} \cdots s_{\tilde{a}_{h,k}+p_j} \\
&= x_{\xi} 		 {d_{A^+_{h,k}}}
  	s_{\tilde{a}_{h,k}+1} \cdots s_{\tilde{a}_{h,k}+p_j}.
\end{align*}
 Since $s_{\tilde{a}_{h,k}+1} \cdots s_{\tilde{a}_{h,k}+p_j}c_{\tilde{a}_{h,k}+p_j+1}=c_{\tilde{a}_{h,k}+1}
  	s_{\tilde{a}_{h,k}+1} \cdots s_{\tilde{a}_{h,k}+p_j}$, by Lemma \ref{sc+},
\begin{align*}
z_k	&=
		x_\xi d_{A^+_{h,k}}c_{\tilde{a}_{h,k}+1}c_A \SDIJ({{\tilde{a}_{h, k}}}, {{\tilde{a}_{h, k}} - a_{h, k} + 1})
		\sum_{\sigma\in\mathcal{D}_{\nu_{A^+_{h,k}}} \cap \mathfrak{S}_\mu} \sigma.
\end{align*}

By {\cite[Corollary 3.5]{DW1}}, we have
\begin{align*}
z_k &=
		 (\sum_{\tau\in \mathcal{D}^{-1}_{\nu_{(A^+_{h,k})^T}} \cap \mathfrak{S}_\xi}\tau)
	d_{A^+_{h,k}} 	x_{\nu_{A^+_{h,k}}} c_A \SDIJ({{\tilde{a}_{h, k}}}, {{\tilde{a}_{h, k}} - a_{h, k} + 1})
		\sum_{\sigma\in\mathcal{D}_{\nu_{A^+_{h,k}}} \cap \mathfrak{S}_\mu} \sigma \\
&=
		(-1)^{ \SOE{\tilde{a}}_{h-1,k} ( \SOE{a}_{h,k} + \SOE{a}_{h+1,k})}
		 (\sum_{\tau\in \mathcal{D}^{-1}_{\nu_{A^+_{h,k}}} \cap \mathfrak{S}_\xi}\tau)
	d_{A^+_{h,k}} 	 \\
	&   \qquad  \qquad \cdot 	
	x_{\nu_{A^+_{h,k}}}   c_{\tilde{a}_{h,k}+1}  c_{\gamma}  \SDIJ({{\tilde{a}_{h, k}}}, {{\tilde{a}_{h, k}} - a_{h, k} + 1})
	\cdot  (\clubsuit) (\clubsuit \clubsuit)
		\sum_{\sigma\in\mathcal{D}_{\nu_{A^+_{h,k}}} \cap \mathfrak{S}_\mu} \sigma .
\end{align*}

The expression
{$ x_{\nu_{{A}^+_{h,k}}} c_{\tilde{a}_{h,k}+1}c_{\gamma}  \SDIJ({{\tilde{a}_{h, k}}}, {{\tilde{a}_{h, k}} - a_{h, k} + 1})$}
 can be discussed in four cases for different values of {${\SOE{a}_{h,k}}$} and {${\SOE{a}_{h+1,k}}$}.

Case 1: $\SOE{a}_{h,k}=0$, $\SOE{a}_{h+1,k}=0$.
Then  $(\SE{A} + E_{h,k}|\SO{A} - E_{h+1,k})\notin \MNZ(n,r)$, {$c_{\gamma} = 1$}.
Lemma \ref{propupper1}{\rm (ii)} shows
\begin{align*}
 x_{\nu_{A^+_{h,k}}} c_{\tilde{a}_{h,k}+1}
	c_{\gamma}  \SDIJ({{\tilde{a}_{h, k}}}, {{\tilde{a}_{h, k}} - a_{h, k} + 1})
=x_{\nu_{A^+_{h,k}}}c_{(\tilde{a}_{h-1,k}+1,\tilde{a}_{h,k}+1)}.
\end{align*}

Case 2:  $\SOE{a}_{h,k}=1$ and $\SOE{a}_{h+1,k}=0$.
 In this case,   {$c_{\gamma} = c_{(\tilde{a}_{h-1,k}+1,\tilde{a}_{h,k})}$},
  $(\SE{A} + E_{h,k}|\SO{A} - E_{h+1,k})\notin \MNZ(n,r)$ and $(\SE{A} - E_{h+1,k}|\SO{A} + E_{h,k})\notin \MNZ(n,r)$.
Lemma \ref{propupper1}{\rm (iv)} shows
\begin{align*}
& x_{\nu_{A^+_{h,k}}} c_{\tilde{a}_{h,k}+1} c_{\gamma}   \SDIJ({{\tilde{a}_{h, k}}}, {{\tilde{a}_{h, k}} - a_{h, k} + 1})
=0.
\end{align*}

Case 3: $\SOE{a}_{h,k}=0$ and $\SOE{a}_{h+1,k}=1$.
In this case,  {$c_{\gamma} = c_{(\tilde{a}_{h,k}+1,\tilde{a}_{h+1,k})} $},
$(\SE{A} - E_{h+1,k}|\SO{A} + E_{h,k})\notin \MNZ(n,r)$ when $a_{h+1,k}=1$.
Applying Lemma \ref{propupper1}{\rm (iii)}, we get
\begin{align*}
&x_{\nu_{A^+_{h,k}}}  c_{\tilde{a}_{h,k}+1}  c_{\gamma}  \SDIJ({{\tilde{a}_{h, k}}}, {{\tilde{a}_{h, k}} - a_{h, k} + 1}) \\
&=\left\{\begin{aligned}
&-(\SEE{a}_{h,k}+1)x_{\nu_{A^+_{h,k}}}, \mbox{ if } a_{h+1,k}=1;\\
&-(\SEE{a}_{h,k}+1)x_{\nu_{A^+_{h,k}}} + x_{\nu_{A^+_{h,k}}}
c_{(\tilde{a}_{h-1,k}+1,\tilde{a}_{h,k}+1)}c_{(\tilde{a}_{h,k}+2,\tilde{a}_{h+1,k})}, \mbox{ otherwise.}
\end{aligned}\right.
\end{align*}

Case 4: $\SOE{a}_{h,k}=1$,  $\SOE{a}_{h+1,k}=1$.
Then
$(\SE{A} - E_{h+1,k}|\SO{A} + E_{h,k})\notin \MNZ(n,r)$
and {$
c_{\gamma} = c_{(\tilde{a}_{h-1,k}+1,\tilde{a}_{h,k})}
	c_{(\tilde{a}_{h,k}+1,\tilde{a}_{h+1,k})}.
$}
Applying Lemma \ref{propupper1} {\rm (v)}, we can derive that
\begin{align*}
&x_{\nu_{A^+_{h,k}}} c_{\tilde{a}_{h,k}+1}
	c_{\gamma} \SDIJ({{\tilde{a}_{h, k}}}, {{\tilde{a}_{h, k}} - a_{h, k} + 1})
 =(\SEE{a}_{h,k}+1)x_{\nu_{A^+_{h,k}}}c_{(\tilde{a}_{h-1,k}+1,\tilde{a}_{h,k}+1)}.
\end{align*}
Summarize the four cases above, then the assertion is  proved.
\end{proof}

\subsection{The Case for the Diagonal Matrix $\ABSUM{X}$}
\ \\
In this section, we will be focused on $\phi_X\phi_A$
when {$X = { (\lambda-E_{h,h}|E_{h,h}) }$} or  {${ (\lambda|O) }$}.
Here {$\lambda = \ro(X) = \co(X) = \ro(A) $}, $\mu=\co(A)$.

\begin{thm}\label{formD1}
For the given $A\in \MNZ(n,r)$  and  $\lambda=\ro(A)$,
 the following formula is satisfied in $\Qnr$
\begin{align*}
\phi_{ (\lambda-E_{h,h}|E_{h,h}) }\phi_A
=&\sum_{1\leq k\leq n,\atop a_{h,k}\geq 1}(-1)^{ \SOE{\tilde{a}}_{h,k}}
	[ \phi_{(\SE{A} - E_{h,k}|\SO{A} + E_{h,k})}
	+  {a}_{h,k} \phi_{(\SE{A} + E_{h,k}|\SO{A} - E_{h,k})}],
\end{align*}
where {$M= (\SE{A} - E_{h,k}|\SO{A} + E_{h,k}), (\SE{A} + E_{h,k}|\SO{A} - E_{h,k})\in\MNZ(n,r)$}.
\end{thm}
\begin{proof}
Assume {${\lambda}_h > 0$}. When {$X = (\lambda-E_{h,h}| E_{h,h})$},
we have {${d_X} = 1$},
 $c_{\gamma}=c_{(\tilde{\lambda}_{h-1}+1,\tilde{\lambda}_h)}$,
{$ \D_{{\nu}_{_X}} \cap \fS_{\lambda} = \{ 1 \}$}.
Let
\begin{displaymath}
\begin{aligned}
	z&= x_{\xi} c_{(\tilde{\lambda}_{h-1}+1,\tilde{\lambda}_h)}{d_A} c_{A}
		(\sum _{{\sigma} \in \D_{{\nu}_{_A}} \cap \fS_{\mu}} {\sigma})\\
&=\sum^{n}_{k=1}\sum^{a_{h,k}-1}_{p=0}x_{\xi}c_{\tilde{\lambda}_{h-1}+a_k+p+1}{d_A} c_{A}(\sum _{{\sigma} \in \D_{{\nu}_{_A}} \cap \fS_{\mu}} {\sigma}).
\end{aligned}
\end{displaymath}
From Proposition \ref{propR3}, we have $c_{\tilde{\lambda}_{h-1}+a_k+p+1}{d_A}={d_A}
		c_{\tilde{a}_{h,k}+p+1}$. Then
\begin{align*}
z= \sum^{n}_{k=1}
		x_{\xi} {d_A}
		c_{(\tilde{a}_{h-1,k}+1,\tilde{a}_{h,k})} c_A
		 \sum _{{\sigma} \in \D_{{\nu}_{_A}} \cap \fS_{\mu}} {\sigma} .
\end{align*}

It is sufficient to calculate {$c_{(\tilde{a}_{h-1,k}+1,\tilde{a}_{h,k})}c_A$} to get the result.

Case 1: $\SOE{a}_{h,k}=0$.
 It forces $(\SE{A} + E_{h,k}|\SO{A} - E_{h,k})\notin \MNZ(n,r)$,
 then
\begin{align*}
c_{(\tilde{a}_{h-1,k}+1,\tilde{a}_{h,k})}c_A=(-1)^{\tilde{a}^1_{h,k}}c_{(\SE{A} - E_{h,k}|\SO{A} + E_{h,k})}.
\end{align*}

Case 2: $\SOE{a}_{h,k}=1$.  It forces $a_{h,k}=\SEE{a}_{h,k}+1$ and $(\SE{A} - E_{h,k}|\SO{A} + E_{h,k})\notin \MNZ(n,r)$, then
\begin{align*}
&c_{(\tilde{a}_{h-1,k}+1,\tilde{a}_{h,k})}c_A
=(-1)^{\tilde{a}^1_{h,k}} {a}_{h,k} c_{(\SE{A} + E_{h,k}|\SO{A} - E_{h,k})}.
\end{align*}

As a consequence,  the result is proved.
\end{proof}
\begin{thm}\label{formD0}
	For the given $A\in \MNZ(n,r)$  and  $\lambda=\ro(A)$, $\mu=\ro(A)$, we have
\begin{align*}
\phi_{ (\lambda|O) }\phi_A= \phi_A  \phi_{ (\mu|O) } = \phi_A.
\end{align*}
\end{thm}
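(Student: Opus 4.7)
The plan is to use the general machinery already set up in equation~\eqref{eq_gB}, specializing to the very simple case $X = (\lambda|O)$ where $\lambda = \ro(A)$.

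First I would unpack what $X = (\lambda|O)$ means in terms of the data attached to an element of $\MNZ(n,r)$. Since $\SE{X} = \lambda$ is a diagonal matrix (with the composition $\lambda$ as its diagonal) and $\SO{X} = O$, one immediately reads off $\ro(X) = \co(X) = \lambda$, $\nu_X = \lambda$, $d_X = 1$, and $c_X = 1$ (no odd entries contribute to the Clifford factor). Moreover $\D_{\nu_X} \cap \fS_\lambda = \D_\lambda \cap \fS_\lambda = \{1\}$, so
\begin{equation*}
T_X \;=\; x_\lambda \cdot 1 \cdot 1 \cdot 1 \;=\; x_\lambda.
\end{equation*}

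Next, I would plug these ingredients into the product formula \eqref{eq_gB}. Because $\xi = \lambda = \ro(A)$, the compatibility condition $\eta = \lambda$ required for $\phi_X \phi_A$ to be nonzero is automatic, and
\begin{equation*}
z \;=\; \bigl(x_\lambda \cdot 1 \cdot 1 \cdot 1\bigr)\bigl(d_A\, c_A \sum_{\sigma \in \D_{\nu_A}\cap \fS_\mu}\sigma\bigr) \;=\; x_\lambda\, d_A\, c_A \sum_{\sigma \in \D_{\nu_A}\cap \fS_\mu}\sigma \;=\; T_A.
\end{equation*}
Matching with the right-hand side $\sum_M f_M T_M$ of \eqref{eq_gB} forces $f_A = 1$ and all other $f_M = 0$, whence $\phi_X \phi_A = \phi_A$.

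There is no real obstacle here: the statement is essentially the assertion that $\phi_{(\lambda|O)}$ is the identity endomorphism on the summand $x_\lambda \SerA$ of $\bigoplus_{\mu} x_\mu \SerA$, and that $\phi_A$ maps into this summand precisely when $\lambda = \ro(A)$. So the proof is a direct unwinding of the definitions of $T_X$ and of $\phi_X$, with no multiplication formulas from Section~3 actually invoked. Alternatively, one could give the argument directly on the defining action $\phi_X(x_\mu h) = \delta_{\mu,\lambda}\, x_\lambda h$: since the image of $\phi_A$ lies in $x_\lambda \SerA$, applying $\phi_X$ afterwards leaves it unchanged, which is the same conclusion.
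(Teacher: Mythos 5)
Your proposal is correct and follows essentially the same route as the paper's own proof: identify $d_X=1$, $c_X=1$, $\D_{\nu_X}\cap\fS_\lambda=\{1\}$, so that $z=x_\lambda d_A c_A\sum_{\sigma\in\D_{\nu_A}\cap\fS_\mu}\sigma=T_A$ and hence $\phi_{(\lambda|O)}\phi_A=\phi_A$. The closing remark interpreting $\phi_{(\lambda|O)}$ as the identity on the summand $x_\lambda\SerA$ is a nice addition but not needed.
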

\begin{proof}
For $ X = (\lambda|O) $, we have
$
	d_{ X} = 1,
	c_{ X} = 1,
	\mathcal{D}_{{\nu}_{X }} \cap \fS_{\lambda} = \{1\},
$
so
\begin{align*}
	 x_{\lambda}{d_A} c_{A}\sum _{{\sigma} \in \mathcal{D}_{{\nu}_{A}} \cap \fS_{\mu}}{\sigma}=T_{A},
\end{align*}
which means $\phi_{ (\lambda|O) }\phi_A=\phi_A.$
One may prove $\phi_A \phi_{ (\mu|O) }=\phi_A $ in the same way.
\end{proof}

\subsection{The case for the Lower Triangular Matrix $\ABSUM{X}$}
\ \\
When {$X = (\lambda-E_{h,h}+E_{h+1,h}|O)$} or {$(\lambda-E_{h,h}|E_{h+1,h})$},
notice that
the matrices {$(\lambda-E_{h,h}+E_{h+1,h})$} and {$(\lambda-E_{h+1,h+1}+E_{h,h+1})$}
are transposes to each other.
The product of $\phi_X\phi_A$ can be studied in a symmetric way to the case in Section 4.1.


According to \cite{DG},  we have $d_X=1$ and
\begin{align*}
\mathcal{D}_{\nu_X}\cap \mathfrak{S}_\lambda
= \{ 1,  s_{\tilde{\lambda}_h - 1}, s_{\tilde{\lambda}_h - 1}s_{\tilde{\lambda}_h-2},
	\cdots,
	s_{\tilde{\lambda}_h-1}s_{\tilde{\lambda}_h-2}\cdots s_{\tilde{\lambda}_{h-1}+1}\}.
\end{align*}

Similar to the upper triangular case,  let
\begin{align*}
z&= (x_{\xi} {d_X} c_X \sum _{{\sigma} \in \D_{{\nu}_{_X}} \cap \fS_{\lambda}} {\sigma})
	 \cdot
	( {{d_A}} c_{A}  \sum _{{\sigma} \in \D_{{\nu}_{_A}} \cap \fS_{\mu}} {\sigma}) \\
&=
		\sum_{j=0}^{\lambda_h -1}
		x_{\xi}  c_{X}
		s_{\tilde{\lambda}_{h}-1} s_{\tilde{\lambda}_{h}-2} \cdots s_{\tilde{\lambda}_{h}-j}
		{{d_A}} c_{A}
		\sum _{{\sigma} \in \D_{{\nu}_{_A}} \cap \fS_{\mu}} {\sigma} = \sum_{k=1}^n z_k,
\end{align*}
where  {$s_{\tilde{\lambda}_{h}-1} s_{\tilde{\lambda}_{h}-2} \cdots s_{\tilde{\lambda}_{h}-j} = 1$} when {$j = 0$},
and
\begin{align*}
z_k&=
		\sum_{j=b_{k}}^{b_{k-1} - 1}
		x_{\xi}  c_{X}
		s_{\tilde{\lambda}_{h}-1} s_{\tilde{\lambda}_{h}-2} \cdots s_{\tilde{\lambda}_{h}-j}
		{{d_A}} c_{A}
		\sum _{{\sigma} \in \D_{{\nu}_{_A}} \cap \fS_{\mu}} {\sigma}.
\end{align*}

As
 {$d_{(\SE{A} - E_{h,k}+E_{h+1,k}|\SO{A})} =d_{(\SE{A} |\SO{A} - E_{h,k}+E_{h+1,k})}$}
, we use {$d_{A^-_{h,k}}$} to denote them.
 So does the notation {$\nu_{A^-_{h,k}}$}.


\begin{lem}\label{sc-}
For the given {$A$}, we have
\begin{align*}
	 \SDIJ({{\tilde{a}_{h,k}} - 1}, {{\tilde{a}_{h,k}} - a_{h,k} + 1})  c_A
	\sum_{\sigma\in\mathcal{D}_{\nu_A}\cap\mathfrak{S}_\mu}\sigma=
	c_A  \SAIJ({{\tilde{a}_{h,k}}}, {{\tilde{a}_{h,k}} + a_{h+1,k} - 1})
	\sum_{\sigma\in\mathcal{D}_{\nu_{A^-_{h,k}}}\cap\mathfrak{S}_\mu}\sigma.
\end{align*}
\end{lem}
\begin{proof}
The proof is analogous to Lemma \ref{sc+} by applying Corollary \ref{prop3R5}
and relations in {$\SerA$}.
\end{proof}

\begin{thm}\label{L0A}
For the given $A\in \MNZ(n,r)$,  set  $\lambda=\ro(A)$.
 The following formula is satisfied
\begin{align*}
\phi_{(\lambda-E_{h,h}+E_{h+1,h}|O)}\phi_A
=\sum_{1\leq k\leq n,\atop  a_{h,k}\geq 1}
	&[		(\SEE{a}_{h+1,k}+1) \phi_{ (\SE{A} - E_{h,k}+E_{h+1,k} | \SO{A})} \\
	&  +\phi_{(\SE{A} |\SO{A} - E_{h,k}+E_{h+1,k})} ],
\end{align*}
where {$M= (\SE{A} - E_{h,k}+E_{h+1,k}|\SO{A}), (\SE{A} |\SO{A} - E_{h,k}+E_{h+1,k})\in\MNZ(n,r)$}.
\end{thm}
\begin{proof}
When $X=(\lambda-E_{h,h}+E_{h+1,h}|O)$, we have $c_X=1$, and
\begin{align*}
z_k&=
		\sum_{j=b_{k}}^{b_{k-1} - 1}
		x_{\xi}
		s_{\tilde{\lambda}_{h}-1} s_{\tilde{\lambda}_{h}-2} \cdots s_{\tilde{\lambda}_{h}-j}
		{{d_A}} c_{A}
		\sum _{{\sigma} \in \D_{{\nu}_{_A}} \cap \fS_{\mu}} {\sigma}.
\end{align*}

Referring to \cite{DG}, and applying  Lemma \ref{sc-}, we have
\begin{align*}
z_k&=
	x_\xi d_{A^-_{h,k}}  \SDIJ({{\tilde{a}_{h,k}} - 1}, {{\tilde{a}_{h,k}} - a_{h,k} + 1})  c_A
	\sum_{\sigma\in\mathcal{D}_{\nu_A}\cap\mathfrak{S}_\mu}\sigma \\
&=
	x_\xi d_{A^-_{h,k}} c_A  \SAIJ({{\tilde{a}_{h,k}}}, {{\tilde{a}_{h,k}} + a_{h+1,k} - 1})
	\sum_{\sigma\in\mathcal{D}_{\nu_{A^-_{h,k}}}\cap\mathfrak{S}_\mu}\sigma.
\end{align*}

Because every $s_i$ in $ \SAIJ({{\tilde{a}_{h,k}}}, {{\tilde{a}_{h,k}} + a_{h+1,k} - 1}) $ is with $\tilde{a}_{h,k}\leq i<\tilde{a}_{h+1,k}$, which means $ \SAIJ({{\tilde{a}_{h,k}}}, {{\tilde{a}_{h,k}} + a_{h+1,k} - 1}) $ commutes with $(\clubsuit)$ and $(\clubsuit\clubsuit)$,
it yields
\begin{align*}
	z_k&=(-1)^{ \SOE{\tilde{a}}_{h-1,k}(\SOE{a}_{h,k} + \SOE{a}_{h+1,k})}
	x_\xi d_{A^-_{h,k}} c_{\gamma}  \SAIJ({{\tilde{a}_{h,k}}}, {{\tilde{a}_{h,k}} + a_{h+1,k} - 1})
	(\clubsuit) (\clubsuit\clubsuit)
	\sum_{\sigma\in\mathcal{D}_{\nu_{A^-_{h,k}}}\cap\mathfrak{S}_\mu}\sigma.
\end{align*}

By {\cite[Corollary 3.5]{DW1}}, we have
\begin{align*}
x_\xi d_{A^-_{h,k}}=
	(\sum_{\sigma\in\mathcal{D}^{-1}_{\nu_{(A^-_{h,k})^{T}}}\cap \mathfrak{S}_\xi}\sigma)d_{A^-_{h,k}}x_{\nu_{A^-_{h,k}}}.
\end{align*}

In order to figure out $z_k$, similar to the proof of Theorem \ref{upper0},
we apply  Lemma \ref{prop3F10.0} to study the expression
\begin{align*}
x_{\nu_{A^-_{h,k}}} 	c_{\gamma}
		 \SAIJ({{\tilde{a}_{h,k}}}, {{\tilde{a}_{h,k}} + a_{h+1,k} - 1})
\end{align*}
in all cases of $\SOE{a}_{h,k}$ and $\SOE{a}_{h+1,k}$. Then  the following results can be achieved.

Case 1: $\SOE{a}_{h,k}=0$ and $\SOE{a}_{h+1,k}=0$. In this case
\begin{align*}
z_k=(\SEE{a}_{h+1}+1)T_{(\SE{A} - E_{h,k}+E_{h+1,k}|\SO{A})}.
\end{align*}

Case 2: $\SOE{a}_{h,k}=1$ and $\SOE{a}_{h+1,k}=0$. In this case
\begin{align*}
z_k
=
\left\{\begin{aligned}
& T_{(\SE{A} |\SO{A} - E_{h,k}+E_{h+1,k})}, \mbox{ if } a_{h,k}=1;\\
& T_{(\SE{A} |\SO{A} - E_{h,k}+E_{h+1,k})}+(\SEE{a}_{h+1,k}+1)T_{(\SE{A} - E_{h,k}+E_{h+1,k}|\SO{A})}, \mbox{ if } a_{h,k}>1.
\end{aligned}\right.
\end{align*}

Case 3: $\SOE{a}_{h,k}=0$ and $\SOE{a}_{h+1,k}=1$. It follows
\begin{align*}
z_k
=(\SEE{a}_{h+1,k}+1)T_{(\SE{A} - E_{h,k}+E_{h+1,k}|\SO{A})}.
\end{align*}

Case 4:   $\SOE{a}_{h,k}=1$ and $\SOE{a}_{h+1,k}=1$. It follows
\begin{align*}
z_k
=\left\{\begin{aligned}
&0, &\mbox{ if }a_{h,k}=1;\\
&(\SEE{a}_{h+1,k}+1)T_{(\SE{A} - E_{h,k}+E_{h+1,k}|\SO{A})}, &\mbox{ if }a_{h,k}>1.
\end{aligned}
\right.
\end{align*}

Summarizing the four cases above, one concludes the result.
\end{proof}

\begin{thm}\label{L1A}
For the given $A\in \MNZ(n,r)$  and  $\lambda=\ro(A)$,
 the formula is satisfied in $\Qnr$
\begin{align*}
\phi_{(\lambda-E_{h,h}|E_{h+1,h})}\phi_A
&= \sum_{ 1 \leq  k  \leq  n, \atop a_{h,k} \geq 1}
		[
		(-1)^{\SOE{\tilde{a}}_{h,k}}
		(\SEE{a}_{h+1,k}+1) \phi_{(\SE{A} + E_{h+1,k}|\SO{A} - E_{h,k})} \\
	& \qquad 	+
	(-1)^{\SOE{\tilde{a}}_{h,k}}
	\phi_{(\SE{A} - E_{h,k}|\SO{A} + E_{h+1,k})}],
\end{align*}
where {$M=  (\SE{A} + E_{h+1,k}|\SO{A} - E_{h,k}), (\SE{A} - E_{h,k}|\SO{A} + E_{h+1,k})\in\MNZ(n,r)$}.
\end{thm}

\begin{proof}
When $X=(\lambda-E_{h,h}|E_{h+1,h})$,  we have $c_X=c_{\tilde{\lambda}_h}$
and
\begin{align*}
z_k&=
		\sum_{j=b_{k}}^{b_{k-1} - 1}
		x_{\xi}  c_{{\lambda}_h}
		s_{\tilde{\lambda}_{h}-1} s_{\tilde{\lambda}_{h}-2} \cdots s_{\tilde{\lambda}_{h}-j}
		{{d_A}} c_{A}
		\sum _{{\sigma} \in \D_{{\nu}_{_A}} \cap \fS_{\mu}} {\sigma}.
\end{align*}
According to the relations of $\SerA$, $$c_{{\lambda}_h}
		s_{\tilde{\lambda}_{h}-1} s_{\tilde{\lambda}_{h}-2} \cdots s_{\tilde{\lambda}_{h}-j}=
		s_{\tilde{\lambda}_{h}-1} s_{\tilde{\lambda}_{h}-2} \cdots s_{\tilde{\lambda}_{h}-j}c_{\tilde{\lambda}_h-j}.$$

Let $q_j = j - b_k$. 
Applying  Proposition \ref{propR3} and \ref{prop_splus}, it follows
\begin{align*}
 c_{\tilde{\lambda}_h-j}{{d_A}} &={{d_A}}  c_{\tilde{a}_{h,k} - q_j},\\
 s_{\tilde{\lambda}_{h}-1} s_{\tilde{\lambda}_{h}-2} \cdots s_{\tilde{\lambda}_{h}-j}
		{{d_A}}&=s_{\tilde{\lambda}_h}\cdots s_{\tilde{\lambda}_h+a_k-1}
		d_{A^-_{h,k}}
		s_{\tilde{a}_{h,k}-1}\cdots s_{\tilde{a}_{h,k}-q_j}.
 \end{align*}
Since $s_{\tilde{\lambda}_h}, \cdots, s_{\tilde{\lambda}_h+a_k-1}\in \mathfrak{S}_\xi$, we have $x_\xi s_{\tilde{\lambda}_h}\cdots s_{\tilde{\lambda}_h+a_k-1}=x_\xi$. Therefore
\begin{align*}
z_k=x_\xi
		d_{A^-_{h,k}} c_{\tilde{a}_{h,k}}
		 \SDIJ({{\tilde{a}_{h,k}} - 1}, {{\tilde{a}_{h,k}} - a_{h,k} + 1})
		c _A
		\sum_{\sigma\in\mathcal{D}_{\nu_A}\cap\mathfrak{S}_\mu}\sigma,
\end{align*}

Referring to \cite{DG}, and applying Corollary \ref{prop3R5}, we have
\begin{align*}
z_k
&=
	x_\xi d_{A^-_{h,k}}  c_{\tilde{a}_{h,k}} c_A  \SAIJ({{\tilde{a}_{h,k}}}, {{\tilde{a}_{h,k}} + a_{h+1,k} - 1})
	\sum_{\sigma\in\mathcal{D}_{\nu_{A^-_{h,k}}}\cap\mathfrak{S}_\mu}\sigma.
\end{align*}

For every $s_j$ in $ \SAIJ({{\tilde{a}_{h,k}}}, {{\tilde{a}_{h,k}} + a_{h+1,k} - 1}) $, $\tilde{a}_{h,k}\leq j<\tilde{a}_{h+1,k}$, so
 $ \SAIJ({{\tilde{a}_{h,k}}}, {{\tilde{a}_{h,k}} + a_{h+1,k} - 1}) $ commutes with $(\clubsuit)$ and $(\clubsuit\clubsuit)$,
then
\begin{align*}
z_k&=
	(-1)^{ \SOE{\tilde{a}}_{h,k} ( \SOE{a}_{h,k} + \SOE{a}_{h+1,k})}
	x_\xi d_{A^-_{h,k}}c_{\tilde{a}_{h,k}}
	 c_{\gamma}  \SAIJ({{\tilde{a}_{h,k}}}, {{\tilde{a}_{h,k}} + a_{h+1,k} - 1})
	(\clubsuit) (\clubsuit\clubsuit)
	\sum_{\sigma\in\mathcal{D}_{\nu_{A^-_{h,k}}}\cap\mathfrak{S}_\mu}\sigma.
\end{align*}

By {\cite[Corollary 3.5]{DW1}}, we have
\begin{align*}
z_k&=
	(-1)^{ \SOE{\tilde{a}}_{h,k} ( \SOE{a}_{h,k} + \SOE{a}_{h+1,k})}
	(\sum_{\sigma\in\mathcal{D}^{-1}_{\nu_{(A^-_{h,k})^T}}\cap \mathfrak{S}_\xi}\sigma)
	d_{A^-_{h,k}} \\
	& \qquad \qquad \cdot
	x_{\nu_{A^-_{h,k}}}
	c_{\tilde{a}_{h,k}}
	c_{\gamma}  \SAIJ({{\tilde{a}_{h,k}}}, {{\tilde{a}_{h,k}} + a_{h+1,k} - 1})
	\quad \cdot \quad
	 (\clubsuit) (\clubsuit\clubsuit)
	\sum_{\sigma\in\mathcal{D}_{\nu_{A^-_{h,k}}}\cap\mathfrak{S}_\mu}\sigma.
\end{align*}

Similar to  Theorem \ref{L0A}, it is enough to calculate
\begin{align*}
 x_{\nu_{A^-_{h,k}}}
	c_{\tilde{a}_{h,k}}c_{\gamma}
	 \SAIJ({{\tilde{a}_{h,k}}}, {{\tilde{a}_{h,k}} + a_{h+1,k} - 1}) .
\end{align*}

Applying Lemma \ref{prop3F10}, the following assertions can be obtained.

Case 1: if $\SOE{a}_{h,k}=0$ and $\SOE{a}_{h+1,k}=0$, then
$
z_k =(-1)^{\tilde{a}_{h,k}}T_{\SE{A} - E_{h,k}|\SO{A} + E_{h+1,k}}.
$

Case 2: if $\SOE{a}_{h,k}=1$ and $\SOE{a}_{h+1,k}=0$, then
\begin{align*}
z_k
=(-1)^{\SOE{\tilde{a}}_{h,k}}[(\SEE{a}_{h+1,k}+1)T_{(\SE{A} + E_{h+1,k}|\SO{A} - E_{h,k})} +T_{(\SE{A} - E_{h,k}|\SO{A} + E_{h+1,k})}]
\end{align*}

Case 3: if $\SOE{a}_{h,k}=0$ and $\SOE{a}_{h+1,k}=1$, then
 {$ z_k = 0 $}.

Case 4: if $\SOE{a}_{h,k}=1$ and $\SOE{a}_{h+1,k}=1$, then
{$
z_k
=(-1)^{\SOE{\tilde{a}}_{h,k}}T_{(\SE{A} + E_{h+1,k}|\SO{A} - E_{h,k})}.
$}

In summary, the result is then proved.
\end{proof}

\section{Uniform Spanning Sets and Multiplications for   {$\Qnr$} }\label{sec_spanning}
In this section, we will construct the consistent spanning set {$\LS_r$} of {$\Qnr$} for all $r\geq 0$,
and derive the multiplication formulas for some elements in this set for in order to find the generators for the realization of {$\Uqn$}.


Let
{$
\MNZNS(n) =\{ A=(\SUP{A}) \in \MNZN(n)  \where \SEE{a}_{i,i} = 0 \mbox{, for all } i\}
$}.
	
For any  {$\bs{j} \in \NN^n$}, denote
{$\snorm{\bs{j}} = \sum_{k} j_{k}$}.
If {$\lambda \in \CMN(n,t) $} for some {$ t \in \NN^+$},
set {$ {\lambda}^{\bs{j}} = \prod_{i=1}^n {{\lambda}_i}^{{j}_i}$}.

Let {$\{ \bs{\ep}_{i} \where 1 \le i \le n\}$} be the standard basis of {$\CC^n$}.

For $A \in \MN(n)$,
denote $A+\lambda : =  {A} + \diag{(\lambda)}$.

\begin{defn}\label{A[j,r]}
	For any given {$A=(\SUP{A}) \in \MNZNS(n), \bs{j} \in \NN^n $}, define
\begin{displaymath}
	\AJRS(A,\bs{j},r) =
\left\{
\begin{aligned}
	& \sum_{\substack{\lambda \in \CMN(n, r-\snorm{A})} }
	\lambda ^{\bs{j}} \phi_{( \MNZADD(A, \lambda) )},
		\ &\mbox{if } \snorm{A} \le r; \\
	&0, &\mbox{otherwise};
\end{aligned}
\right.
\end{displaymath}
\end{defn}

 Recall the basis {$\B_r$} of {$\Qnr$},
\begin{align*}
\B_r = \{ \phi_{A} \where  (\SE{A}|\SO{A}) \in \MNZ(n,r)\} .
\end{align*}

\begin{lem}Let
\begin{align*}
	\LS_r = \{ \AJRS(A, \bs{j}, r)
			\where  A \in \MNZNS(n), \bs{j} \in \NN^n
	\}.
\end{align*}
Then $\LS_r$ spans $\Qnr$ for all $r\geq0$, i.e., {$\Qnr = \mbox{span}_{\CC}\LS_r$}.
\end{lem}
\begin{proof} Choose the subset $\LS'_r$ of {$\LS_r $}, where
\begin{align*}
	\LS'_r = \{ \AJRS(A, \bs{j}, r)
			\where  A \in \MNZNS(n), \bs{j} \in \NN^n, \bs{j}_n = 0,
			\snorm{A} + \snorm{\bs{j}} \le r
	\}.
\end{align*}
For any given {$A \in \MNZNS(n)$},  assume {$r \ge \snorm{A}$}. Set
\begin{align*}
	&\LS'_{r,A} = \{ \AJRS(A, \bs{k}, r) \in \LS_r \where
	\bs{k} \in \NN^n,    \bs{k}_n = 0,
	 \snorm{A} + \snorm{\bs{k}} \le r  \}
	\subset \LS'_r, \\
	&\B_{r,A} = \{ \phi_{(\MNZADD(A, \lambda))} \in \B_r \where  \lambda \in \CMN(n, r-\snorm{A}) \},
\end{align*}
	then the transition matrix from {$\B_{r,A}$} to {$\LS'_{r,A}$}
	is {$D = (d_{i,j})$},
	with {$d_{i,j}$} of the form  {${\lambda}^{\bs{k}}$},
	for each
	{$\lambda \in  \CMN(n, r-\snorm{A})$},
	and {$\bs{k} \in  \NN^n$} with {$\bs{k}_n = 0, \snorm{\bs{k}}+ \snorm{A} \le r$}.
	According to \cite[Corollary 4.2]{F}, $D$ is invertible,
	hence {$\LS'_{r,A}$} is a linearly independent set,
	furthermore {$\LS'_r$} forms a basis of {$\Qnr$}.
Hence, for any $r\geq0$,
$\mbox{span}_{\CC} \LS_r
= \Qnr$.
\end{proof}
The uniform definition of the spanning sets {$\LS_r $} for $\Qnr$  for all $r\geq0$ is the key for achieving the new realization for $\Uqn$.
We now aim to derive some uniform multiplication formulas for the spanning set. We need the following lemma as a preparation.

\begin{lem}\label{mulformajrb1.5}
\begin{align*}
{\rm (i)}\quad  &\sum_{\substack{ \lambda \in \CMN(n,r-\snorm{A}-1) } }
	{\lambda} ^{\bs{j}} (\lambda_h + 1)
	\phi_{(\SE{A} + {\lambda} + E_{h,h} |\SO{A} )} \\
 &\qquad =  \sum_{k=0}^{j_h} {\binom{j_h}{k}}
	{(-1)}^{k}
	\ABJRS( \SE{A}, \SO{A}, \bs{ j }+(1- k) \bs{\ep}_h, r ); \\
{\rm (ii)}\quad  &\sum_{\substack{ \lambda \in \CMN(n,r-\snorm{A}+1)\\\lambda_h\geq 1 } }
	{\lambda} ^{\bs{j}}
	\phi_{(\SE{A} + {\lambda} - E_{h,h} |\SO{A}) }
=\sum_{k=0}^{j_h} {\binom{j_h}{k}}
	\ABJRS( \SE{A}, \SO{A}, \bs{ j }- k \bs{\ep}_h, r ).
\end{align*}
\end{lem}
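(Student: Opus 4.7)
The plan is to prove both identities by a single technique: reindex the summation by a shift in $\lambda$, then expand the resulting power using the binomial theorem and exchange the order of summation.

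For (i), I would set $\mu := \lambda + \bs{e}_h$, so that $\mu$ ranges over $\{\mu \in \CMN(n, r-\snorm{A}) : \mu_h \geq 1\}$. Under this change of variable, $\SE{A} + \lambda + E_{h,h} = \SE{A} + \mu$, the factor $(\lambda_h+1)$ becomes $\mu_h$, and $\lambda^{\bs{j}} = (\mu_h-1)^{j_h}\prod_{i \neq h}\mu_i^{j_i}$. Expanding
\[
(\mu_h - 1)^{j_h} = \sum_{k=0}^{j_h} \binom{j_h}{k}(-1)^k \mu_h^{j_h-k}
\]
and swapping the two summations yields
\[
\sum_{k=0}^{j_h} \binom{j_h}{k}(-1)^k \sum_{\mu} \mu_h^{j_h-k+1}\prod_{i\neq h}\mu_i^{j_i}\, \phi_{(\SE{A}+\mu|\SO{A})}.
\]
Since $j_h-k+1\geq 1$, the monomial $\mu_h^{j_h-k+1}$ vanishes at $\mu_h=0$, so the restriction $\mu_h\geq 1$ may be dropped without affecting the sum. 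The inner sum, now taken over all of $\CMN(n,r-\snorm{A})$, is precisely $A[\bs{j}+(1-k)\bs{e}_h, r]$ by the definition of $A[\bs{j},r]$, completing (i).

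For (ii), the argument is parallel and slightly simpler. I would reindex via $\mu := \lambda - \bs{e}_h$; the constraint $\lambda_h \geq 1$ converts cleanly into $\mu \in \CMN(n, r-\snorm{A})$ with no further restriction, and $\SE{A}+\lambda-E_{h,h} = \SE{A}+\mu$, $\lambda^{\bs{j}} = (\mu_h+1)^{j_h}\prod_{i\neq h}\mu_i^{j_i}$. Expanding $(\mu_h+1)^{j_h} = \sum_{k=0}^{j_h}\binom{j_h}{k}\mu_h^{j_h-k}$ by the binomial theorem (this time with no signs and no boundary subtlety, since $\mu_h = 0$ is already permitted) and exchanging the summations identifies the result immediately as $\sum_{k=0}^{j_h}\binom{j_h}{k} A[\bs{j}-k\bs{e}_h, r]$.

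There is no substantive obstacle; the whole argument is index bookkeeping. The only point requiring a little care is confirming, in (i), that removing the $\mu_h\geq 1$ restriction is legitimate, which is automatic from the extra $\mu_h$ factor contributed by $(\lambda_h+1)$. One should also verify the degenerate cases $\snorm{A}>r-1$ in (i) and $\snorm{A}>r$ in (ii), in which both sides vanish either because the indexing set is empty or because $A[\bs{k},r]=0$ by the convention in the definition of $A[\bs{j},r]$.
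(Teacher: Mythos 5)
Your proposal is correct and follows essentially the same route as the paper's own proof: the paper likewise writes $(\lambda_h+1-1)^{j_h}$ (resp.\ $(\lambda_h-1+1)^{j_h}$), expands by the binomial theorem, substitutes $\mu=\lambda\pm\bs{e}_h$, and drops the $\mu_h\geq 1$ restriction in (i) using the extra factor of $\mu_h$. No gaps.
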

\begin{proof}
First of all, we prove   {\rm (i)}.
\begin{align*}
\mbox{LHS of {\rm (i)}}
&=
	\sum_{\substack{ \lambda \in \CMN(n,r-\snorm{A}-1) } }
	(\prod_{l \ne h} {{\lambda}_l}^{{j}_l} ) \cdot
	{({\lambda}_h + 1 - 1)}^{j_h}  (\lambda_h + 1) \cdot
	\phi_{(\SE{A} + ({\lambda} + \bs{\ep}_h) |\SO{A} )}  \\
&=
    \sum_{k=0}^{j_h}
    {\binom{j_h}{k}}
	{(-1)}^{k}
	\sum_{\substack{ \lambda \in \CMN(n,r-\snorm{A}-1) } }
    ({\lambda} + \bs{\ep}_h)^{\bs{j} + (1-k)\bs{\ep}_h}
	\phi_{(\SE{A} + ({\lambda} + \bs{\ep}_h) |\SO{A} )}\\
&=\sum_{k=0}^{j_h}
    {\binom{j_h}{k}}
	{(-1)}^{k}
	\sum_{\substack{ \mu \in \CMN(n,r-\snorm{A})\\\mu_h\geq 1 } }
    {\mu}^{\bs{j} + (1-k)\bs{\ep}_h}
	\phi_{(\SE{A} + \mu |\SO{A} )}
	 \quad (\mbox {set }{\mu=\lambda+\bs{\ep}_h}).
\end{align*}
Since $j_h+1-k>0$, it yields $\mu_h^{j_h+1-k}=0$ when $\mu_h=0$.
As a result,
\begin{align*}
\mbox{ LHS of {\rm (i)}}=\sum_{k=0}^{j_h}
    {\binom{j_h}{k}}
	{(-1)}^{k}
	\sum_{\substack{ \mu \in \CMN(n,r-\snorm{A})} }
    (\mu)^{\bs{j} + (1-k)\bs{\ep}_h}
	\phi_{(\SE{A} + \mu |\SO{A} )} =\mbox{RHS of {\rm (i)}}.
\end{align*}
Secondly,
\begin{align*}
\mbox{LHS of {\rm (ii)}}=&\sum_{\substack{ \lambda \in \CMN(n,r-\snorm{A}+1)\\\lambda_h\geq 1 } }
	{\lambda} ^{\bs{j}}
	\phi_{\SE{A} + {\lambda} - E_{h,h} |\SO{A} }  \\
&=
	\sum_{\substack{ \lambda \in \CMN(n,r-\snorm{A}+1) \\\lambda_h\geq 1} }
	\prod_{l \ne h} {{\lambda}_l}^{{j}_l} \cdot
	{({\lambda}_h -1 + 1)}^{j_h}  \cdot
	\phi_{\SE{A} + ({\lambda} - \bs{\ep}_h) |\SO{A} }  \\
&=
    \sum_{k=0}^{j_h} {\binom{j_h}{k}}
	\sum_{\substack{ \lambda \in \CMN(n,r-\snorm{A}+1) \\\lambda_h\geq 1} }
	{(\lambda - \bs{\ep}_h)}^{\bs{j}  - k \bs{\ep}_h}  \cdot
	\phi_{\SE{A} + ({\lambda} - \bs{\ep}_h) |\SO{A} }  \\
&= \sum_{k=0}^{j_h} {\binom{j_h}{k}}
	\sum_{\substack{ \mu \in \CMN(n,r-\snorm{A}) } }
	{\mu}^{\bs{j} - k \bs{\ep}_h}  \cdot
	\phi_{\SE{A} + \mu |\SO{A} }=\mbox{ RHS of {\rm (ii)}}.
\end{align*}
\end{proof}

\begin{prop}\label{mulformajrb1}
	Let {$n, r\in \NN$} and {$h \in [1,n]$}.
	For any {$A \in \MNZNS(n)$} and 	{$\bs{j} \in \NN^n$},
	the following multiplication formulas are satisfied  in {$\Qnr$} for all $r\geq\snorm{A}$:
\begin{align*}
	{\rm (i)} \quad & \AJRS(\bs{O}, \bs{0}, r) \cdot \ABJRS( \SE{A}, \SO{A}, \bs{ j },  r )
		= \ABJRS( \SE{A}, \SO{A}, \bs{ j },  r ); \\
	{\rm (ii)} \quad & \AJRS(\bs{O}, \bs{\ep}_h, r) \cdot \ABJRS( \SE{A}, \SO{A}, \bs{ j },  r ) = \ABJRS( \SE{A}, \SO{A}, \bs{ j } + \bs{\ep}_h,  r )
		\ + \ \sum_{k=1}^n a_{h,k} \ABJRS( \SE{A}, \SO{A}, \bs{ j },  r ); \\
	{\rm (iii)} \quad & \ABJRS( \SE{A}, \SO{A}, \bs{ j },  r )  \cdot  \AJRS(\bs{O}, \bs{\ep}_h, r)  = \ABJRS( \SE{A}, \SO{A}, \bs{ j } + \bs{\ep}_h,  r )
		\ + \ \sum_{k=1}^n a_{k,h} \ABJRS( \SE{A}, \SO{A}, \bs{ j },  r ); \\
{\rm (iv)}
	\quad & \ABJRS( O, E_{h,h}, \bs{ 0 },  r ) \cdot \ABJRS( \SE{A}, \SO{A}, \bs{ j },  r ) \\
	&= \sum_{1\leq k\leq n,k\neq h} {(-1)}^{\SOE{\tilde{a}}_{h,k}}
			(\SE{A} - E_{h,k}|\SO{A} + E_{h,k})[\bs{j},r] \\
   	& \qquad +
        {(-1)}^{\SOE{\tilde{a}}_{h,h}}
        \sum_{k=0}^{j_h} {\binom{j_h}{k}}
    	\ABJRS( \SE{A}, \SO{A} + E_{h,h}, \bs{ j } - k \bs{\ep}_h, r ) \\
	& \qquad +
	\sum_{ 1\leq k \leq n, k \neq h} {(-1)}^{\SOE{\tilde{a}}_{h,k}}
		 a_{h,k}(\SE{A} + E_{h,k}|\SO{A} - E_{h,k})[\bs{j},r] \\
	& \qquad +
	{(-1)}^{\SOE{\tilde{a}}_{h,h}}
        \sum_{k=0}^{j_h} {\binom{j_h}{k}}
    	{(-1)}^{k}
    	\ABJRS( \SE{A}, \SO{A}  - E_{h,h}, \bs{ j } +(1- k) \bs{\ep}_h, r ).
\end{align*}
\end{prop}
Note that the multiplication formulas above as well as those given in Propostions \ref{mulformajrb2}--\ref{mulformajrb3} are independent of $r$ as long as $r\geq|A|.$ This remarkable property allows to extend the formulas to a ``limit'' algebra---the direct product of $\Qnr$---which will be discussed in the next section.
\begin{proof}
%
%
{\rm (i)}. Theorem \ref{formD0} implies
\begin{align*}
\AJRS(\bs{O}, \bs{0}, r) \cdot \ABJRS( \SE{A}, \SO{A}, \bs{ j },  r )
&=\sum_{\substack{  \\\mu \in \CMN(n,r)  }}
		\mu ^{\bs{0}} \phi_{{\mu} | O }
		\cdot
		\sum_{\substack{ \\ \lambda \in  \CMN(n,r-\snorm{A}) }
		} \lambda ^{\bs{j}} \phi_{(\MNZADD(A, \lambda))} \\
&=\sum_{\substack{ \\ \lambda \in  \CMN(n, r-\snorm{A}) \\
		} }
		\lambda ^{\bs{j}} \phi_{\MNZADD(A, \lambda) } = \ABJRS( \SE{A}, \SO{A}, \bs{ j },  r ).
\end{align*}
	{\rm (ii)}. Similar to {\rm (i)}, according to Theorem \ref{formD0},
\begin{align*}
	 \AJRS(\bs{O}, \bs{\ep}_h, r) \cdot \ABJRS( \SE{A}, \SO{A}, \bs{ j },  r )
	& =\sum_{\substack{  \\ \mu \in \CMN(n,r)   }}
		\mu ^{\bs{\ep}_h} \phi_{{\mu} | O }
		\cdot
		\sum_{\substack{\\ \lambda \in \CMN(n, r-\snorm{A}) } }
		\lambda ^{\bs{j}} \phi_{\MNZADD(A, \lambda)} \\
	& =\sum_{\substack{ \lambda \in \CMN(n, r-\snorm{A})
		} }
		{(\ro(A + \lambda))} ^{\bs{\ep}_h} \phi_{{\mu} | O }
		\lambda ^{\bs{j}} \phi_{ \MNZADD(A, \lambda)} \\
	& =\sum_{\substack{ \lambda \in \CMN(n, r-\snorm{A})
		} }
		( \sum_{k=1}^n a_{h,k} + \lambda_h) \phi_{{\mu} | O }
		\lambda ^{\bs{j}} \phi_{ \MNZADD(A, \lambda)}.
\end{align*}
 Since ${\lambda}_h\lambda ^{\bs{j}}={\lambda}^{\bs{j}+\bs{\ep}_h}$, we have
 $\sum_{\substack{ \\ \lambda \in  \CMN(n, r-\snorm{A}) } }
		{\lambda}_h
		\lambda ^{\bs{j}}
		\phi_{\MNZADD(A, \lambda) }=\ABJRS( \SE{A}, \SO{A}, \bs{ j } + \bs{\ep}_h,  r ),$
hence
\begin{align*}
	 \AJRS(\bs{O}, \bs{\ep}_h, r) \cdot \ABJRS( \SE{A}, \SO{A}, \bs{ j },  r ) = \ABJRS( \SE{A}, \SO{A}, \bs{ j } + \bs{\ep}_h,  r )
		\ + \ \sum_{k=1}^n a_{h,k} \ABJRS( \SE{A}, \SO{A}, \bs{ j },  r ).
\end{align*}
{\rm (iii)}. It can be proved in the same way as {\rm(ii)}.\\
{\rm (iv)}.
\begin{align*}
&\ABJRS( O, E_{h,h}, \bs{ 0 },  r ) \cdot \ABJRS( \SE{A}, \SO{A}, \bs{ j },  r ) \\
	& =\sum_{\substack{   \\ \mu \in \CMN(n, r-1) } }
		\mu ^{\bs{0}} \phi_{({\mu} | E_{h,h}) }
		\cdot
		\sum_{\substack{ \\ \lambda \in \CMN(n, r-\snorm{A}) } }
		\lambda ^{\bs{j}} \phi_{(\MNZADD(A, \lambda))} \\
	& = \sum_{\substack{
			\mu \in \CMN(n, r-1),
			\lambda \in \CMN(n, r-\snorm{A}) \\
			\co(\mu + E_{h,h}) = \ro(A+\lambda)
			}
		}
		\mu ^{\bs{0}}  \lambda ^{\bs{j}}
		\cdot
		\phi_{({\mu}|E_{h,h})} \phi_{(\MNZADD(A, \lambda)) }
		\quad \mbox{ (by Theorem \ref{formD1})}\\
	& = \sum_{\substack{ \lambda \in \CMN(n,r-\snorm{A})\\ 1\leq k\leq n } }
		\lambda ^{\bs{j}}		
		[
		{(-1)}^{\SOE{\tilde{a}}_{h,k}}
		\phi_{(\SE{A} + \lambda-E_{h,k}|\SO{A} + E_{h,k})}\\
		&\qquad \qquad  +
		{(-1)}^{\SOE{\tilde{a}}_{h,k}}
		{({A} + \lambda)}_{h,k} \phi_{(\SE{A} + \lambda+E_{h,k}|\SO{A} - E_{h,k})}
		]
 \\
    &= F_1 + F_2 + F_3 + F_4,
\end{align*}
    where
\begin{align*}
F_1 &= \sum_{\substack{  \lambda \in \CMN(n,r-\snorm{A}) \\1\leq k\leq n,k\neq h } }
		\lambda ^{\bs{j}}  {(-1)}^{\SOE{\tilde{a}}_{h,k}}
		\phi_{(\SE{A} + \lambda-E_{h,k}|\SO{A} + E_{h,k})}  \\
	&= \sum_{1\leq k\leq n,k\neq h} {(-1)}^{\SOE{\tilde{a}}_{h,k}}
			\ABJRS(\SE{A} - E_{h,k}, \SO{A} + E_{h,k}, \bs{j},r);\\
F_2 &=
        {(-1)}^{\SOE{\tilde{a}}_{h,h}}
        \sum_{\substack{  \lambda \in \CMN(n,r-\snorm{A}) } }
		\lambda ^{\bs{j}}
		\phi_{(\SE{A} + \lambda-E_{h,h}|\SO{A} + E_{h,h})} \\
    &=
        {(-1)}^{\SOE{\tilde{a}}_{h,h}}
        \sum_{k=0}^{j_h} {\binom{j_h}{k}}
    	\ABJRS( \SE{A}, \SO{A} + E_{h,h}, \bs{ j } - k \bs{\ep}_h, r )
    	 \mbox{ ( by Lemma \ref{mulformajrb1.5} )}; \\
F_3 &= \sum_{\substack{
				\lambda \in \CMN(n,r-\snorm{A})\\
				1\leq k\leq n,k\neq h
			} }
		\lambda ^{\bs{j}}{(-1)}^{\SOE{\tilde{a}}_{h,k}}
		{a}_{h,k} \phi_{(\SE{A} + \lambda+E_{h,k}|\SO{A} - E_{h,k})}
		 \\
	&= \sum_{ 1\leq k \leq n, k \neq h} {(-1)}^{\SOE{\tilde{a}}_{h,k}}
		 a_{h,k}
		 \ABJRS(\SE{A} + E_{h,k}, \SO{A} - E_{h,k}, \bs{j},r); \\
F_4 &=
        {(-1)}^{\SOE{\tilde{a}}_{h,h}}
        \sum_{\substack{  \lambda \in \CMN(n,r-\snorm{A}) } }
		\lambda ^{\bs{j}}(\lambda_h+1)
		\phi_{(\SE{A} + \lambda+E_{h,h}|\SO{A} - E_{h,h})} \\
&=
	{(-1)}^{\SOE{\tilde{a}}_{h,h}}
        \sum_{k=0}^{j_h} {\binom{j_h}{k}}
    	{(-1)}^{k}
    	\ABJRS( \SE{A}, \SO{A}  - E_{h,h}, \bs{ j } +(1- k) \bs{\ep}_h, r )
    	\mbox{ ( by Lemma \ref{mulformajrb1.5} )} .
\end{align*}
Taking the sum of $F_1$  to $F_4$ will prove {\rm (iv)}.
\end{proof}
\begin{prop}\label{mulformajrb2}
	For any {$A \in \MNZNS(n)$},
	{$\bs{j} \in \NN^n$},
	and {$h \in [1,n-1]$},
	the following multiplication formulas hold in {$\Qnr$} for all $r\geq\snorm{A}$:
\begin{align*}
	{\rm (i)} \quad & \ABJRS( E_{h, h+1}, O, \bs{ 0 },  r ) \cdot \ABJRS( \SE{A}, \SO{A}, \bs{ j },  r ) \\
	&=
		\sum _{\substack{
			1 \le k \le n \\
			k \ne h, h+1
		}}
		(\SEE{a}_{h,k}+1)
		\ABJRS( \SE{A} + E_{h,k} - E_{h+1,k}, \SO{A}, \bs{ j },  r ) \\
    &+ \sum_{k=0}^{j_h} {\binom{j_h}{k}}
	{(-1)}^{k}
	\ABJRS( \SE{A} - E_{h+1,h}, \SO{A}, \bs{ j } +(1- k) \bs{\ep}_h, r ) \\
	&+
		(\SEE{a}_{h,h+1}+1)
		\sum_{k=0}^{j_{h+1}} {(\substack{ j_{h+1} \\ \ \\ k })}
		\ABJRS( \SE{A}+ E_{h,h+1}, \SO{A}, \bs{ j } - k \bs{\ep}_{h+1}, r ) \\
	&+
	\sum _{\substack{1 \le k \le n  }}
		\ABJRS( \SE{A}, \SO{A}+ E_{h,k} - E_{h+1,k}, \bs{ j },  r ) ;\\
{\rm (ii)} \quad & \ABJRS( O, E_{h, h+1}, \bs{ 0 },  r  ) \cdot \ABJRS( \SE{A}, \SO{A}, \bs{ j },  r ) \\
	&= \sum _{\substack{ 1 \le k \le n \\ k \ne h }}
	 {(-1)}^{\SOE{\tilde{a}}_{h+1,k}} (\SEE{a}_{h,k}+1)
	\ABJRS( \SE{A} + E_{h,k}, \SO{A} - E_{h+1,k}, \bs{ j },  r ) \\
&+	{(-1)}^{\SOE{\tilde{a}}_{h+1,h}}
    \sum_{k=0}^{j_h} {\binom{j_h}{k}}
	{(-1)}^{k}
	\ABJRS( \SE{A}, \SO{A} - E_{h+1,h}, \bs{ j } +(1- k) \bs{\ep}_h, r ) \\
&+
	\sum _{\substack{ 1 \le k \le n \\ k \ne h+1 }}
	{(-1)}^{\SOE{\tilde{a}}_{h,k}}
	\ABJRS( \SE{A}- E_{h+1,k}, \SO{A} + E_{h,k}, \bs{ j },  r ) \\
&+
	{(-1)}^{\SOE{\tilde{a}}_{h,h+1}}
	\sum_{k=0}^{j_{h+1}} {(\substack{ j_{h+1} \\ \ \\ k })}
	\ABJRS( \SE{A}, \SO{A} + E_{h,h+1}, \bs{ j } - k \bs{\ep}_{h+1}, r ).
\end{align*}
\end{prop}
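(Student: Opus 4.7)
The plan is to mirror the strategy of the proof of Proposition~\ref{mulformajrb1}(iii): expand both factors via Definition~\ref{A[j,r]}, apply the main multiplication formula of Section~4 to each resulting product of basis elements, and then repackage the result into BLM-style elements by means of Lemma~\ref{mulformajrb1.5}.

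For part (i), I would first rewrite
\[
\ABJRS(E_{h,h+1},O,\bs{0},r) = \sum_{\substack{\lambda'\in\CMN(n,r)\\ \lambda'_{h+1}\ge 1}} \phi_{(\lambda'-E_{h+1,h+1}+E_{h,h+1}\,|\,O)},
\]
obtained by reindexing $\mu\mapsto \lambda'=\mu+\bs{e}_{h+1}$ in the sum over $\mu\in\CMN(n,r-1)$. Combined with $\ABJRS(\SE{A},\SO{A},\bs{j},r)=\sum_{\lambda}\lambda^{\bs{j}}\phi_{A+\lambda}$, the product is nonzero only when $\lambda'=\ro(A)+\lambda$, which pins down $\lambda'$. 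I then apply Theorem~\ref{U0A} to each $\phi_{(\ro(A+\lambda)+E_{h,h+1}-E_{h+1,h+1}\,|\,O)}\,\phi_{A+\lambda}$, noting that $\SE{A+\lambda}=\SE{A}+\diag(\lambda)$ and $\SO{A+\lambda}=\SO{A}$. The coefficient $(\SEE{a'}_{h,k}+1)$ produced by the theorem equals $(\SEE{a}_{h,k}+1)$ for $k\ne h,h+1$, equals $\lambda_h+1$ for $k=h$, and equals $(\SEE{a}_{h,h+1}+1)$ for $k=h+1$.

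Next I would split the resulting sum according to the index $k$. The generic case $k\ne h,h+1$ contributes directly $(\SEE{a}_{h,k}+1)\ABJRS(\SE{A}+E_{h,k}-E_{h+1,k},\SO{A},\bs{j},r)$, and the odd-part terms $\phi_{(\SE{A+\lambda}\,|\,\SO{A}+E_{h,k}-E_{h+1,k})}$ collect into $\sum_{k}\ABJRS(\SE{A},\SO{A}+E_{h,k}-E_{h+1,k},\bs{j},r)$, with the convention $\phi_M=0$ for invalid $M$ silently absorbing the exceptional entries. For the remaining cases $k=h$ and $k=h+1$, the summand carries an extra factor of $\lambda_h+1$ (respectively a shift by $-E_{h+1,h+1}$ on the diagonal of $\SE{A}+\diag(\lambda)$); applying Lemma~\ref{mulformajrb1.5}(i) with $\SE{A}$ replaced by $\SE{A}-E_{h+1,h}$ handles the $k=h$ piece, while Lemma~\ref{mulformajrb1.5}(ii) with $\SE{A}$ replaced by $\SE{A}+E_{h,h+1}$ and $h$ replaced by $h+1$ handles the $k=h+1$ piece, producing exactly the two binomial sums displayed in the statement.

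Part (ii) proceeds identically, but with Theorem~\ref{U1A} in place of Theorem~\ref{U0A}. The signs $(-1)^{\SOE{\tilde{a}}_{h+1,k}}$ and $(-1)^{\SOE{\tilde{a}}_{h,k}}$ are inert under the addition of $\diag(\lambda)$ to $\SE{A}$, since $\SOE{\tilde{a}}$ only involves the odd part, so they factor out of the sum over $\lambda$ unchanged. The main bookkeeping obstacle common to both parts is the cases $k\in\{h,h+1\}$, where the matrix $\SE{A}\pm E_{h,k}\mp E_{h+1,k}$ acquires a nonzero diagonal entry and thus escapes $\MNZNS(n)$; this is precisely the situation that forces us to rewrite $\sum_\lambda \lambda^{\bs{j}}(\lambda_h+1)\phi_{\cdots+E_{h,h}\cdots}$ or $\sum_\lambda \lambda^{\bs{j}}\phi_{\cdots-E_{h+1,h+1}\cdots}$ as binomial sums of legitimate BLM elements via Lemma~\ref{mulformajrb1.5}.
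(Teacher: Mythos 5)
Your proposal is correct and follows essentially the same route as the paper's proof: expand both factors via Definition~\ref{A[j,r]}, match $\co(\mu+E_{h,h+1})=\ro(A+\lambda)$, apply Theorem~\ref{U0A} (resp.\ Theorem~\ref{U1A}), split by the column index $k$ into the generic, $k=h$, $k=h+1$, and odd-part contributions, and convert the two exceptional diagonal-shifted sums into binomial expressions via Lemma~\ref{mulformajrb1.5}(i) and (ii). The paper's proof organizes these four contributions as $F_1,\dots,F_4$ but is otherwise identical in substance.
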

\begin{proof}
Similar to the proof of Proposition \ref{mulformajrb1},
applying Theorem \ref{upper0} and \ref{U1A} may prove these formulas.

{\rm (i)} For the given {$A$},
\begin{align*}
&\ABJRS( E_{h, h+1}, O, \bs{ 0 },  r ) \cdot \ABJRS( \SE{A}, \SO{A}, \bs{ j },  r ) \\
	& =\sum_{\substack{  \mu \in \CMN(n, r-1) } }
		\mu ^{\bs{0}} \phi_{(E_{h,h+1} + \mu | O)}
		\cdot
		\sum_{\substack{ \\ \lambda \in \CMN(n,r-\snorm{A}) } }
		\lambda ^{\bs{j}} \phi_{(\MNZADD(A, \lambda))} \\
	& =\sum_{\substack{
			\lambda \in \CMN(n,r-\snorm{A}),
			\mu \in \CMN(n, r-1) \\
			\co(E_{h,h+1}) + \mu = \ro(A) +\lambda
		} }
		\mu ^{\bs{0}}
		\lambda ^{\bs{j}}
		\cdot
		\phi_{(E_{h, h+1} + \mu | O)}
		\phi_{(\MNZADD(A, \lambda)) } \\
	& =
		\sum_{\substack{
			\lambda \in \CMN(n,r-\snorm{A}) \\
		} }
		\lambda ^{\bs{j}}
		\sum _{\substack{ 1 \le k \le n }} [
			(\SEE{{(A+\lambda)}}_{h,k}+1) \phi_{(\SE{A} + \lambda + E_{h,k} - E_{h+1,k} |\SO{A}) }\\
			&\qquad +
			\phi_{(\SE{A} + {\lambda}|\SO{A} + E_{h,k} - E_{h+1,k})} ]
			\quad \mbox{( by Theorem \ref{upper0}})\\
	& =
		\sum _{\substack{
			1 \le k \le n \\
			k \ne h, h+1
		}}
		\sum_{\substack{
			\lambda \in \CMN(n,r-\snorm{A}) \\
		} }
		\lambda ^{\bs{j}}
			(\SEE{(A+\lambda)}_{h,k}+ 1)
			\phi_{(\SE{A} + {\lambda} + E_{h,k} - E_{h+1,k} |\SO{A} )}  \\
		&\qquad +
		\sum_{\substack{ \lambda \in \CMN(n,r-\snorm{A}) } }
		\lambda ^{\bs{j}}
			( \SEE{(A+\lambda)}_{h,h}+1)
			\phi_{(\SE{A} + {\lambda} + E_{h,h} - E_{h+1,h} |\SO{A} )}  \\
		&\qquad +
		\sum_{\substack{ \lambda \in \CMN(n,r-\snorm{A}) } }
		\lambda ^{\bs{j}}
			(\SEE{(A+\lambda)}_{h,h+1}+1)
			\phi_{(\SE{A} + {\lambda} + E_{h,h+1} - E_{h+1,h+1} |\SO{A} )}  \\
		&\qquad +
		\sum _{\substack{
			1 \le k \le n \\
		}}
		\sum_{\substack{
			\lambda \in \CMN(n,r-\snorm{A})
		} }
		\lambda ^{\bs{j}}
			\phi_{(\SE{A} + {\lambda}|\SO{A} + E_{h,k} - E_{h+1,k})}
			\quad (\mbox{ by Theorem }\ref{upper0})\\
	&= F_1 + F_2 + F_3 + F_4,
\end{align*}
where
\begin{align*}
	F_1 &= \sum _{\substack{
			1 \le k \le n \\
			k \ne h, h+1
		}}
		\sum_{\substack{
			\lambda \in \CMN(n,r-\snorm{A}) \\
		} }
		\lambda ^{\bs{j}} (\SEE{a_{h,k}}+1)
		\phi_{(\SE{A} + {\lambda} + E_{h,k} - E_{h+1,k} |\SO{A}) }  \\
	&= \sum _{\substack{
			1 \le k \le n \\
			k \ne h, h+1
		}}
		(\SEE{a_{h,k}}+1)
		\ABJRS( \SE{A} + E_{h,k} - E_{h+1,k}, \SO{A}, \bs{ j },  r ); \\
	F_2 &=	\sum_{\substack{ \lambda \in \CMN(n,r-\snorm{A}) } }
		\lambda ^{\bs{j}}
			(\SEE{{a}_{h,h}}+ {\lambda}_{h} + 1)
			\phi_{(\SE{A} + {\lambda} + E_{h,h} - E_{h+1,h} |\SO{A}) }  \\
	&=	\sum_{\substack{ \lambda \in \CMN(n,r-\snorm{A}) } }
		\lambda ^{\bs{j}} ({\lambda}_{h} + 1)
			\phi_{(\SE{A} - E_{h+1,h} + {\lambda} + E_{h,h} |\SO{A} )}
  \\
    &=  \sum_{k=0}^{j_h} {\binom{j_h}{k}}
	{(-1)}^{k}
	\ABJRS( \SE{A} - E_{h+1,h}, \SO{A}, \bs{ j } +(1- k) \bs{\ep}_h, r )
	\quad (\mbox{by Lemma } \ref{mulformajrb1.5} {\rm (i)}); \\
	F_3 &= \sum_{\substack{ \lambda \in \CMN(n,r-\snorm{A}) } }
		\lambda ^{\bs{j}}
			( \SEE{a}_{h,h+1}+1)
			\phi_{(\SE{A} + {\lambda} + E_{h,h+1} - E_{h+1,h+1} |\SO{A}) }  \\
	&=
		( \SEE{a}_{h,h+1}+1)
		\sum_{k=0}^{j_{h+1}} {(\substack{ j_{h+1} \\ \ \\ k })}
		\ABJRS( \SE{A}+ E_{h,h+1}, \SO{A}, \bs{ j } - k \bs{\ep}_{h+1}, r )
		 \mbox{ ( by Lemma \ref{mulformajrb1.5}{\rm (ii)} )}; \\
	F_4& =\sum _{\substack{
			1 \le k \le n
		}}
		\sum_{\substack{
			\lambda \in \CMN(n,r-\snorm{A})
		} }
		\lambda ^{\bs{j}}
			\phi_{(\SE{A} + {\lambda}|\SO{A}+ E_{h,k} - E_{h+1,k})} \\
	&=
	\sum _{\substack{
			1 \le k \le n
		}}
		\ABJRS( \SE{A}, \SO{A}+ E_{h,k} - E_{h+1,k}, \bs{ j },  r ).
\end{align*}
Taking the sum of $F_1$ to $F_4$,  formula {\rm (i)} is proved.

{\rm (ii)}.
\begin{align*}
& \ABJRS( O, E_{h, h+1}, \bs{ 0 },  r  ) \cdot \ABJRS( \SE{A}, \SO{A}, \bs{ j },  r ) \\
& =\sum_{\substack{  \mu \in \CMN(n, r-1) } }
	\mu ^{\bs{0}} \phi_{(\mu| E_{h,h+1})}
	\cdot
	\sum_{\substack{ \\ \lambda \in \CMN(n,r-\snorm{A}) } }
	\lambda ^{\bs{j}} \phi_{(\MNZADD(A, \lambda))} \\
& =\sum_{\substack{
		\lambda \in \CMN(n,r-\snorm{A}),
		\mu \in \CMN(n, r-1) \\
		\co(E_{h,h+1}) + \mu = \ro(A) +\lambda
	} }
	\mu ^{\bs{0}} \lambda ^{\bs{j}}
	\cdot
	\phi_{(\mu| E_{h, h+1})}
	\phi_{(\MNZADD(A, \lambda) )} \\
& =\sum_{\substack{
		\lambda \in \CMN(n,r-\snorm{A}) \\
	} }
	\lambda ^{\bs{j}}
		\sum _{\substack{ 1 \le k \le n  }} [  {(-1)}^{\SOE{\tilde{a}}_{h+1,k}} (\SEE{(A + \lambda)}_{h,k}+1)\phi_{(\SE{A}+ \lambda + E_{h,k}| \SO{A} - E_{h+1,k})}  \\
		& \qquad + {(-1)}^{\SOE{\tilde{a}}_{h,k}}  \phi_{(\SE{A}+ \lambda - E_{h+1,k}| \SO{A} + E_{h,k})}]
		\quad (\mbox{by Theorem }\ref{U1A})
	\\
& = F_1 + F_2 + F_3 + F_4  ;
\end{align*}
where
\begin{align*}
F_1 &= \sum_{\substack{  \lambda \in \CMN(n,r-\snorm{A}) } }
	\lambda ^{\bs{j}}
	\sum _{\substack{ 1 \le k \le n \\ k \ne h  }}
	{(-1)}^{\SOE{\tilde{a}}_{h+1,k}} (\SEE{(A + \lambda)}_{h,k}+1)
	\phi_{(\SE{A}+ \lambda + E_{h,k}| \SO{A} - E_{h+1,k})} \\
&=\sum _{\substack{ 1 \le k \le n \\ k \ne h  }}
	{(-1)}^{\SOE{\tilde{a}}_{h+1,k}} (\SEE{a}_{h,k}+1)
	 \sum_{\substack{  \lambda \in \CMN(n,r-\snorm{A}) } }
	\lambda ^{\bs{j}}
	\phi_{(\SE{A}+ \lambda + E_{h,k}| \SO{A} - E_{h+1,k})}  \\
&=\sum _{\substack{ 1 \le k \le n \\ k \ne h }}
	{(-1)}^{\SOE{\tilde{a}}_{h+1,k}} (\SEE{a}_{h,k}+1)
	\ABJRS( \SE{A} + E_{h,k}, \SO{A} - E_{h+1,k}, \bs{ j },  r ); \\
F_2 &= \sum_{\substack{  \lambda \in \CMN(n,r-\snorm{A}) } }
	\lambda ^{\bs{j}}
	{(-1)}^{\SOE{\tilde{a}}_{h+1,h}} (\SEE{(A + \lambda)}_{h,h}+1)
	\phi_{(\SE{A}+ \lambda + E_{h,h}| \SO{A} - E_{h+1,h})} \\
&=
\hspace{-1mm}
	{(-1)}^{\SOE{\tilde{a}}_{h+1,h}}
    \sum_{\substack{  \lambda \in \CMN(n,r-\snorm{A}) } }
	\lambda ^{\bs{j}}  ({\lambda}_h + 1)
	\phi_{(\SE{A}+ \lambda + E_{h,h}| \SO{A} - E_{h+1,h})}\quad ( \mbox{ since $\SEE{a}_{h,h}=0$})\\
&=\hspace{-1mm}
	{(-1)}^{\SOE{\tilde{a}}_{h+1,h}}   \!
    \sum_{k=0}^{j_h}    \!
     {\binom{j_h}{k}}	
	{(-1)}^{k}
	\ABJRS( \SE{A}, \SO{A} - E_{h+1,h}, \bs{ j } +(1- k) \bs{\ep}_h, r )  \!
	 \mbox{ (by Lemma \ref{mulformajrb1.5})};\\
F_3 &= \sum_{\substack{  \lambda \in \CMN(n,r-\snorm{A}) } }
	\lambda ^{\bs{j}}
	\sum _{\substack{ 1 \le k \le n \\ k \ne h+1 }}
	{(-1)}^{\SOE{\tilde{a}}_{h,k}}
	\phi_{(\SE{A}+ \lambda - E_{h+1,k}| \SO{A} + E_{h,k})} \\
&=
	\sum _{\substack{ 1 \le k \le n \\ k \ne h+1 }}
	{(-1)}^{\SOE{\tilde{a}}_{h,k}}
	\ABJRS( \SE{A}- E_{h+1,k}, \SO{A} + E_{h,k}, \bs{ j },  r ); \\
F_4 &= \sum_{\substack{ \lambda \in \CMN(n,r-\snorm{A}) } }
	\lambda ^{\bs{j}}
	{(-1)}^{\SOE{\tilde{a}}_{h,h+1}}
	\phi_{(\SE{A}+ \lambda - E_{h+1,h+1}| \SO{A} + E_{h,h+1})} \\
&=
	{(-1)}^{\SOE{\tilde{a}}_{h,h+1}}
	\sum_{k=0}^{j_{h+1}} {(\substack{ j_{h+1} \\ \ \\ k })}
	\ABJRS( \SE{A}, \SO{A} + E_{h,h+1}, \bs{ j } - k \bs{\ep}_{h+1}, r )
	\quad \mbox{ (by Lemma  \ref{mulformajrb1.5}) }.
\end{align*}

Hence the formulas are proved.
\end{proof}

\begin{prop}\label{mulformajrb3}
	For any {$A \in \MNZNS(n)$},
	{$\bs{j} \in \NN^n$},
	and {$h \in [1,n-1]$},
	the following multiplication formulas hold in {$\Qnr$} for all  $r\geq\snorm{A}$:
\begin{align*}
{\rm (i)} \quad & \ABJRS( E_{h+1, h}, O, \bs{ 0 },  r ) \cdot \ABJRS( \SE{A}, \SO{A}, \bs{ j },  r ) \\
&=
	\sum _{\substack{ 1 \le k \le n \\k \ne h,h+1}}
	(\SEE{a}_{h+1,k} + 1)
	\ABJRS( \SE{A} -E_{h,k} + E_{h+1,k}, \SO{A}, \bs{ j },  r ) \\
	&+ (\SEE{a}_{h+1,h} + 1)
	\sum_{k=0}^{j_h} {\binom{j_h}{k}}
	\ABJRS( \SE{A} + E_{h+1,h}, \SO{A}, \bs{ j } - k \bs{\ep}_h, r ) \\
    &+ \sum_{k=0}^{j_{h+1}} {(\substack{ j_{h+1} \\ \ \\ k })}
	{(-1)}^{k}
	\ABJRS( \SE{A}-E_{h,h+1}, \SO{A}, \bs{ j } +(1- k) \bs{\ep}_{h+1}, r ) \\
	&+ \sum _{\substack{ 1 \le k \le n }}
	\ABJRS( \SE{A}, \SO{A} -E_{h,k} + E_{h+1,k}, \bs{ j },  r ); \\
{\rm (ii)} \quad & \ABJRS( O, E_{h+1, h}, \bs{ 0 },  r ) \cdot \ABJRS( \SE{A}, \SO{A}, \bs{ j },  r ) \\
&=
	\sum _{ \substack{ 1 \le k \le n \\ k \ne h+1 }}
		{(-1)}^{\SOE{\tilde{a}}_{h,k} }
		(\SEE{a}_{h+1,k} + 1)
	\ABJRS( \SE{A}+ E_{h+1,k}, \SO{A} - E_{h,k}, \bs{ j },  r )  \\
&+  {(-1)}^{\SOE{\tilde{a}}_{h,h+1} }
    \sum_{k=0}^{j_{h+1}} {(\substack{ j_{h+1} \\ \ \\ k })}
	{(-1)}^{k}
	\ABJRS( \SE{A}, \SO{A}  - E_{h,h+1}, \bs{ j } +(1- k) \bs{\ep}_{h+1}, r ) \\
&+
	\sum _{ \substack{ 1 \le k \le n \\ k \ne h }}
	{(-1)}^{\SOE{\tilde{a}}_{h,k}}
	\ABJRS( \SE{A} -E_{h,k}, \SO{A} + E_{h+1,k}, \bs{ j },  r ) \\
&+
	{(-1)}^{\SOE{\tilde{a}}_{h,h}}
	\sum_{k=0}^{j_h} {\binom{j_h}{k}}
	\ABJRS( \SE{A}, \SO{A} +  E_{h+1,h}, \bs{ j } - k \bs{\ep}_h, r ). \\
\end{align*}
\end{prop}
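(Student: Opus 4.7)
The plan is to mirror the argument of Proposition \ref{mulformajrb2} almost verbatim, but now applying the lower-triangular multiplication formulas (Theorems \ref{L0A} and \ref{L1A}) in place of their upper-triangular counterparts (Theorems \ref{U0A} and \ref{U1A}). Part {\rm (i)} concerns $\ABJRS(E_{h+1,h}, O, \bs{0}, r)\cdot \ABJRS(\SE{A}, \SO{A}, \bs{j}, r)$. First I expand both factors as sums over compositions using Definition \ref{A[j,r]}: the left factor is $\sum_{\mu\in\CMN(n,r-1)}\phi_{(E_{h+1,h}+\mu | O)}$ and the right factor is $\sum_{\lambda\in\CMN(n,r-\snorm{A})}\lambda^{\bs{j}}\phi_{\MNZADD(A,\lambda)}$. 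The orthogonality of the $\phi$'s forces $\co(\mu+E_{h+1,h})=\ro(A+\lambda)$, collapsing the double sum to a single sum over $\lambda$ where $\mu$ is determined. Then I apply Theorem \ref{L0A} with the shifted diagonal $\lambda$ in place of the ordinary $\lambda$ to rewrite the product.

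The resulting sum naturally splits into four pieces according to the image basis vector $\phi_{M}$: the off-diagonal contributions with $k\neq h,h+1$, the diagonal contributions at $k=h$ and at $k=h+1$, and the ``odd-part'' contributions coming from $\SO{A}\pm E_{h,k}\mp E_{h+1,k}$. For the $k\neq h,h+1$ part the coefficient $\SEE{a}_{h+1,k}+1$ is constant in $\lambda$ and the sum reassembles immediately as $\ABJRS(\SE{A}-E_{h,k}+E_{h+1,k},\SO{A},\bs{j},r)$. For $k=h$, the coefficient becomes $\SEE{a}_{h+1,h}+\lambda_h+1$, and since $\SEE{a}_{h,h}=0$ the shift sends $\lambda_h\mapsto \lambda_h-1$; an application of Lemma \ref{mulformajrb1.5}{\rm (ii)} converts the $\lambda^{\bs{j}}$-weighted sum with a $\lambda_h$-shift into the binomial sum $\sum_k \binom{j_h}{k}\ABJRS(\SE{A}+E_{h+1,h},\SO{A},\bs{j}-k\bs{e}_h,r)$. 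For $k=h+1$, the shift decreases $\lambda_{h+1}$ by $1$ and produces $(\lambda_{h+1}+1)$-type coefficient from the $(A+\lambda)_{h+1,h+1}+1$ factor (absorbing the diagonal), so Lemma \ref{mulformajrb1.5}{\rm (i)} gives the alternating binomial sum. The odd-part piece, summed over all $k$, is just $\sum_k \ABJRS(\SE{A},\SO{A}-E_{h,k}+E_{h+1,k},\bs{j},r)$.

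Part {\rm (ii)} is handled in the same way, replacing Theorem \ref{L0A} by Theorem \ref{L1A}. The output of Theorem \ref{L1A} carries the sign ${(-1)}^{\SOE{\tilde a}_{h,k}}$ and splits into an ``even-increment'' branch $(\SE{A}+E_{h+1,k}|\SO{A}-E_{h,k})$ and an ``odd-increment'' branch $(\SE{A}-E_{h,k}|\SO{A}+E_{h+1,k})$, each of which is analyzed according to whether $k$ equals the critical index ($h+1$ for the first branch, $h$ for the second branch). For the generic $k$ in each branch, the coefficients pass through unchanged. For the critical $k$ the diagonal adjustment in $\lambda$ triggers Lemma \ref{mulformajrb1.5} exactly as in part {\rm (i)}, yielding the two binomial sums with $\bs{j}-k\bs{e}_h$ or $\bs{j}+(1-k)\bs{e}_{h+1}$.

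The main obstacle will be the bookkeeping of the four separate subcases in each part, together with the signs ${(-1)}^{\SOE{\tilde a}_{h,k}}$ and ${(-1)}^{\SOE{\tilde a}_{h+1,k}}$, which are invariant under addition of a diagonal matrix to $\SE{A}$ (and therefore survive the collapse of the $\lambda$-sum). As in Proposition \ref{mulformajrb2}, the only genuinely nontrivial reductions are the two invocations of Lemma \ref{mulformajrb1.5}, used to convert sums of the form $\sum_\lambda \lambda^{\bs{j}}(\lambda_h+1)\phi_{\cdots+\lambda+E_{h,h}\cdots}$ and $\sum_\lambda \lambda^{\bs{j}}\phi_{\cdots+\lambda-E_{h,h}\cdots}$ into binomial combinations of $\AJRS$-symbols; once these are in place, combining the four pieces in each formula immediately matches the stated right-hand side.
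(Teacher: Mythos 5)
Your proposal follows essentially the same route as the paper's proof: expand both $\AJRS$-symbols over compositions, collapse the double sum via the orthogonality condition $\co(\mu+E_{h+1,h})=\ro(A+\lambda)$, apply Theorem \ref{L0A} (resp.\ Theorem \ref{L1A}) to each term, split into the four pieces $F_1,\dots,F_4$, and invoke Lemma \ref{mulformajrb1.5} at the critical indices ($k=h$ and $k=h+1$), exactly as the paper does. One small slip in your narrative for part {\rm (i)}: at $k=h$ the coefficient from Theorem \ref{L0A} is the constant $\SEE{a}_{h+1,h}+1$ (the diagonal shift $\lambda$ does not touch the $(h+1,h)$ entry), not $\SEE{a}_{h+1,h}+\lambda_h+1$; the $\lambda$-dependent coefficient $\lambda_{h+1}+1$ arises only at $k=h+1$, and your subsequent applications of Lemma \ref{mulformajrb1.5}{\rm (ii)} and {\rm (i)} respectively are consistent with the correct coefficients, so the conclusion stands.
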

\begin{proof}
Theorem \ref{L0A} and \ref{L1A} will be applied to prove these formulas.

{\rm (i)}.
\begin{align*}
&\ABJRS( E_{h+1,h}, O, \bs{ 0 },  r ) \cdot \ABJRS( \SE{A}, \SO{A}, \bs{ j },  r ) \\
& =\sum_{\substack{  \mu \in \CMN(n, r-1) } }
	\mu ^{\bs{0}} \phi_{(E_{h+1,h} + \mu | O)}
	\cdot
	\sum_{\substack{ \\ \lambda \in \CMN(n,r-\snorm{A}) } }
	\lambda ^{\bs{j}} \phi_{(\MNZADD(A, \lambda))} \\
& =\sum_{\substack{
		\lambda \in \CMN(n,r-\snorm{A}),
		\mu \in \CMN(n, r-1) \\
		\co(E_{h+1,h}) + \mu = \ro(A) +\lambda
	} }
	\mu ^{\bs{0}}
	\lambda ^{\bs{j}}
	\cdot
	\phi_{(E_{h+1,h} + \mu | O)}
	\phi_{(\MNZADD(A, \lambda) )} \\
& =\sum_{\substack{  \lambda \in \CMN(n,r-\snorm{A}) } }
	\lambda ^{\bs{j}}
	\sum _{\substack{ 1 \le k \le n }}  [
	(\SEE{(A + \lambda)}_{h+1,k} + 1)
	\phi_{(\SE{A} + \lambda -E_{h,k} + E_{h+1,k} | \SO{A})} \\
	& \qquad + \phi_{(\SE{A} + \lambda  | \SO{A} -E_{h,k} + E_{h+1,k})} ]
	\quad (\mbox{by Theorem }\ref{L0A})\\
& = F_1 + F_2 + F_3 + F_4 ;
\end{align*}
where
\begin{align*}
F_1 &=
	\sum_{\substack{  \lambda \in \CMN(n,r-\snorm{A}) } }
	\lambda ^{\bs{j}}
	\sum _{\substack{ 1 \le k \le n \\k \ne h,h+1}}
	(\SEE{(A + \lambda)}_{h+1,k} + 1)
	\phi_{(\SE{A} + \lambda -E_{h,k} + E_{h+1,k} | \SO{A})} \\
&=
	\sum _{\substack{ 1 \le k \le n \\k \ne h,h+1}}
	(\SEE{a}_{h+1,k} + 1)
	\sum_{\substack{  \lambda \in \CMN(n,r-\snorm{A}) } }
	\lambda ^{\bs{j}}
	\phi_{(\SE{A} + \lambda -E_{h,k} + E_{h+1,k} | \SO{A})} \\
&=
	\sum _{\substack{ 1 \le k \le n \\k \ne h,h+1}}
	(\SEE{a}_{h+1,k} + 1)
	\ABJRS( \SE{A} -E_{h,k} + E_{h+1,k}, \SO{A}, \bs{ j },  r ); \\
F_2 &= \sum_{\substack{  \lambda \in \CMN(n,r-\snorm{A}) } }
	\lambda ^{\bs{j}}
	(\SEE{a}_{h+1,h} + 1)
	\phi_{(\SE{A} + \lambda -E_{h,h} + E_{h+1,h} | \SO{A})} \\
&=
	(\SEE{a}_{h+1,h} + 1)
	\sum_{k=0}^{j_h} {\binom{j_h}{k}}
	\ABJRS( \SE{A} + E_{h+1,h}, \SO{A}, \bs{ j } - k \bs{\ep}_h, r ); \\
F_3&= \sum_{\substack{  \lambda \in \CMN(n,r-\snorm{A}) } }
	\lambda ^{\bs{j}}
	(\SEE{(A + \lambda)}_{h+1,h+1} + 1)
	\phi_{(\SE{A} + \lambda -E_{h,h+1} + E_{h+1,h+1} | \SO{A})} \\
&= \sum_{\substack{  \lambda \in \CMN(n,r-\snorm{A}) } }
	\lambda ^{\bs{j}} ({\lambda}_{h+1} +1 )
	\phi_{(\SE{A} -E_{h,h+1} + \lambda + E_{h+1,h+1} | \SO{A})}
	\quad (\mbox{since $\SEE{a}_{h+1,h+1}=0$}) \\
&=
    \sum_{k=0}^{j_{h+1}} {(\substack{ j_{h+1} \\ \ \\ k })}
	{(-1)}^{k}
	\ABJRS( \SE{A}-E_{h,h+1}, \SO{A}, \bs{ j } +(1- k) \bs{\ep}_{h+1}, r )
	\quad (\mbox{by Lemma } \ref{mulformajrb1.5}); \\
F_4 &= \sum_{\substack{  \lambda \in \CMN(n,r-\snorm{A}) } }
	\lambda ^{\bs{j}}
	\sum _{\substack{ 1 \le k \le n  }}
	\phi_{(\SE{A} + \lambda  | \SO{A} -E_{h,k} + E_{h+1,k})}  \\
&=
	\sum _{\substack{ 1 \le k \le n }}
	\ABJRS( \SE{A}, \SO{A} -E_{h,k} + E_{h+1,k}, \bs{ j },  r ).
\end{align*}
Hence the formula is proved.\\
{\rm (ii)}.
\begin{align*}
& \ABJRS( O, E_{h+1,h}, \bs{ 0 },  r  ) \cdot \ABJRS( \SE{A}, \SO{A}, \bs{ j },  r ) \\
& =\sum_{\substack{  \mu \in \CMN(n, r-1) } }
	\mu ^{\bs{0}} \phi_{( \mu | E_{h+1,h})}
	\cdot
	\sum_{\substack{ \\ \lambda \in \CMN(n,r-\snorm{A}) } }
	\lambda ^{\bs{j}} \phi_{(\MNZADD(A, \lambda))} \\
& =\sum_{\substack{
		\lambda \in \CMN(n,r-\snorm{A}),
		\mu \in \CMN(n, r-1) \\
		\co(E_{h+1,h}) + \mu = \ro(A) +\lambda
	} }
	\mu ^{\bs{0}}
	\lambda ^{\bs{j}}
	\cdot
	\phi_{(\mu | E_{h+1,h})}
	\phi_{(\MNZADD(A, \lambda) )} \\
& =\sum_{\substack{ \lambda \in \CMN(n,r-\snorm{A}) } }
	\lambda ^{\bs{j}}
	\sum _{ \substack{ 1 \le k \le n  }}  [
		{(-1)}^{\SOE{\tilde{a}}_{h,k} }
		(\SEE{(A+\lambda)}_{h+1,k} + 1)
		\phi_{(\SE{A} + \lambda + E_{h+1,k}| \SO{A} - E_{h,k})}  \\
	& \qquad + {(-1)}^{\SOE{\tilde{a}}_{h,k}}
		\phi_{(\SE{A} + \lambda -E_{h,k} | \SO{A} + E_{h+1,k})}
	] \quad (\mbox{by Theorem \ref{L1A}})\\
&= F_1 + F_2 + F_3 + F_4 ;
\end{align*}
where
\begin{align*}
F_1 &=\sum_{\substack{ \lambda \in \CMN(n,r-\snorm{A}) } }
	\lambda ^{\bs{j}}
	\sum _{ \substack{ 1 \le k \le n \\ k \ne h+1 }}
		{(-1)}^{\SOE{\tilde{a}}_{h,k} }
		(\SEE{(A+\lambda)}_{h+1,k} + 1)
		\phi_{(\SE{A} + \lambda + E_{h+1,k}| \SO{A} - E_{h,k})}  \\
&=
	\sum _{ \substack{ 1 \le k \le n \\ k \ne h+1 }}
		{(-1)}^{\SOE{\tilde{a}}_{h,k} }
		(\SEE{a}_{h+1,k} + 1)
	\sum_{\substack{ \lambda \in \CMN(n,r-\snorm{A}) } }
	\lambda ^{\bs{j}}
		\phi_{(\SE{A} + \lambda + E_{h+1,k}| \SO{A} - E_{h,k})}  \\
&=
	\sum _{ \substack{ 1 \le k \le n \\ k \ne h+1 }}
		{(-1)}^{\SOE{\tilde{a}}_{h,k} }
		(\SEE{a}_{h+1,k} + 1)
	\ABJRS( \SE{A}+ E_{h+1,k}, \SO{A} - E_{h,k}, \bs{ j },  r );  \\
F_2 &=\sum_{\substack{ \lambda \in \CMN(n,r-\snorm{A}) } }
	\lambda ^{\bs{j}}
	{(-1)}^{\SOE{\tilde{a}}_{h,h+1} }
	(\SEE{(A+\lambda)}_{h+1,h+1} + 1)
	\phi_{(\SE{A} + \lambda + E_{h+1,h+1}| \SO{A} - E_{h,h+1})}  \\
&=
    {(-1)}^{\SOE{\tilde{a}}_{h,h+1} }
    \sum_{\substack{ \lambda \in \CMN(n,r-\snorm{A}) } }
	\lambda ^{\bs{j}}  ({\lambda}_{h+1} + 1)
	\phi_{(\SE{A} + \lambda + E_{h+1,h+1}| \SO{A} - E_{h,h+1})} \\
&=
    {(-1)}^{\SOE{\tilde{a}}_{h,h+1} }  \!
    \sum_{k=0}^{j_{h+1}} {(\substack{ j_{h+1} \\ \ \\ k })}
	{(-1)}^{k}
	\ABJRS( \SE{A}, \SO{A}  - E_{h,h+1}, \bs{ j } +(1- k) \bs{\ep}_{h+1}, r )  \!
	\mbox{ (by Lemma  \ref{mulformajrb1.5})};\\
F_3 &=\sum_{\substack{ \lambda \in \CMN(n,r-\snorm{A}) } }
	\lambda ^{\bs{j}}
	\sum _{ \substack{ 1 \le k \le n \\ k \ne h }}
	{(-1)}^{\SOE{\tilde{a}}_{h,k}}
	\phi_{(\SE{A} + \lambda -E_{h,k} | \SO{A} + E_{h+1,k})} \\
&=
	\sum _{ \substack{ 1 \le k \le n \\ k \ne h }}
	{(-1)}^{\SOE{\tilde{a}}_{h,k}}
	\ABJRS( \SE{A} -E_{h,k}, \SO{A} + E_{h+1,k}, \bs{ j },  r ); \\
F_4 &=\sum_{\substack{ \lambda \in \CMN(n,r-\snorm{A}) } }
	\lambda ^{\bs{j}}
	{(-1)}^{\SOE{\tilde{a}}_{h,h}}
	\phi_{(\SE{A} + \lambda -E_{h,h} | \SO{A} + E_{h+1,h})} \\
&=
	{(-1)}^{\SOE{\tilde{a}}_{h,h}}
	\sum_{k=0}^{j_h} {\binom{j_h}{k}}
	\ABJRS( \SE{A}, \SO{A} +  E_{h+1,h}, \bs{ j } - k \bs{\ep}_h, r )
	\quad (\mbox{by Lemma \ref{mulformajrb1.5}}).
\end{align*}
Taking the sum of $F_1, F_2, F_3$ and $ F_4$,   formula {\rm (ii)} is proved.
\end{proof}

{$\sqrt{-1} \in \CC$} is a square root of {$-1$}.
Define the even elements {$1_{\lambda, r}$}, {$1_{r}$},  {$e_{j, r}$}, {$f_{j, r} $}
and odd elements {$h_{\ol{i}, r}$}, {$e_{\ol{j}, r}$}, {$f_{\ol{j}, r} $} as :
\begin{align*}
	&1_{\lambda,r} =  {\phi}_{\lambda| O},  \quad
	1_r =  \sum_{\lambda \in \Lambda(n,r)} 1_{\lambda, r} =\AJRS(\bs{O}, \bs{0},r); \\
	&h_{i,r} = \AJRS(O, \bs{\ep}_i,  r ),  \quad
	h_{\ol{i},r}
		= {\sqrt{-1}} \cdot \ABJRS( O, E_{i,i}, \bs{ 0 },  r ); \\
	&e_{j,r}
		=\ABJRS( E_{j,j+1}, O, \bs{ 0 },  r ), \qquad
	e_{\ol{j},r}
		={\sqrt{-1}} \cdot \ABJRS( O, E_{j,j+1}, \bs{ 0 },  r );   \\
	&f_{j,r}
		=\ABJRS( E_{j+1,j}, O, \bs{ 0 },  r ),   \qquad
	f_{\ol{j},r}
		={\sqrt{-1}} \cdot   \ABJRS( O, E_{j+1,j}, \bs{ 0 },  r ),
\end{align*}
where {$1 \le i \le n, 1 \le j \le n-1$}.

Recall that for any homogeneous elements $x$, $y$, the super bracket product is
\begin{align*}
[x, y] = xy - {(-1)}^{p(x)p(y)}yx,
\end{align*}
where {$p(z) = 0 $} if $z$ is even, and {$p(z) = 1 $} if $z$ is odd.

The set of simple roots of $\Uqn$ is {$\{\alpha_j = \bs{\ep}_j - \bs{\ep}_{j+1}|1\leq j<n\}$ and $(\bs{\ep}_i,\bs{\ep}_j)=\delta_{i,j}.$ For any $\lambda\in \mathbb{N}^n$, we can rewrite $\lambda=\sum^n_{i=1}\lambda_i\bs{\ep}_i$.
\begin{prop}\label{propRel1}
The elements
{$ \{  1_{\lambda,r},  h_{\ol{i},r},  e_{j,r},  e_{\ol{j},r},  f_{j,r},  f_{\ol{j},r}
\where 1 \le i \le n, 1 \le j \le  n-1
\}$}
defined above satisfy the relations (QR1) to (QR6),
which are the same as (QS1'), (QS2'), (QS3), (QS4'), (QS5), (QS6) in \cite[Theorem 4.10]{DW2}.
\begin{align*}
(QR1) \qquad
	&1_{\lambda,r} 1_{\mu,r} = \delta_{\lambda, \mu} 1_{\lambda,r},  \qquad
	\sum_{\lambda \in \Lambda(n,r)} 1_{\lambda,r} = 1, \qquad
	 h_{\ol{i},r} 1_{\lambda,r} = 1_{\lambda,r} h_{\ol{i},r}, \\
	& [ h_{\ol{i}, r}, h_{\ol{j}, r}]
		= \dij	\sum_{\lambda \in \Lambda(n,r)} 2 \lambda_i 1_{\lambda, r},
	 \qquad
	 h_{\ol{i},r} 1_{\lambda,r} = 0 \,\mbox{ if } \lambda_i = 0; \\
(QR2) \qquad
& e_{j, r}1_{\lambda, r}= 1_{\lambda + \alpha_{j}, r} e_{j,r},
\quad e_{\ol{j}, r}1_{\lambda, r}=1_{\lambda + \alpha_{j}, r} e_{\ol{j}, r},  \\
& f_{j, r}1_{\lambda, r} = 1_{\lambda - \alpha_{j}, r} f_{j,r},
\quad f_{\ol{j}, r}1_{\lambda, r}	=	1_{\lambda - \alpha_{j}, r} f_{\ol{j}, r},\\
&1_{\lambda, r} e_{j, r} = e_{j,r} 1_{\lambda - \alpha_{j}, r} ,
\quad 1_{\lambda, r} e_{\ol{j}, r} =  e_{\ol{j}, r} 1_{\lambda - \alpha_{j}, r} ,  \\
& 1_{\lambda, r} f_{j, r} = f_{j,r} 1_{\lambda + \alpha_{j}, r} ,
\quad 1_{\lambda, r} f_{\ol{j}, r} =	f_{\ol{j}, r} 1_{\lambda + \alpha_{j}, r} ,\\
&\mbox{
where
{$ 1_{\lambda + \alpha_{j}, r}  = 0$}  if {$ \lambda + \alpha_{j} \notin \Lambda(n, r) $},
{$ 1_{\lambda - \alpha_{j}, r}= 0 $}  if {$ \lambda - \alpha_{j} \in \Lambda(n, r)$}}; \\
(QR3)  \qquad
&[h_{\ol{i}, r}, e_{j, r} ] = (\bs{\ep}_{i}, \alpha_{j})e_{\ol{j}, r},  \qquad
[h_{\ol{i}, r}, f_{j, r}] = -(\bs{\ep}_{i}, \alpha_{j}) f_{\ol{j}, r}, \\
&[h_{\ol{i}, r}, e_{\ol{j}, r}] =
\left\{
\begin{aligned}
	&e_{j, r}, \qquad & \mbox{ if } i=j \mbox{ or } j+1,  \\
	& 0, \qquad & \mbox{otherwise,}
\end{aligned}
\right.  \\
&[h_{\ol{i}, r}, f_{\ol{j}, r}] =
\left\{
\begin{aligned}
	&f_{j, r}, \qquad  & \mbox{ if } i=j \mbox{ or } j+1,  \\
	& 0, \qquad  & \mbox{otherwise};
\end{aligned}
\right. \\
(QR4) \qquad
& [e_{i,r}, f_{j,r}]
	= \dij \sum_{\lambda \in \Lambda(n,r)} (\lambda_{i} - \lambda_{i+1}) 1_{\lambda,r}, \\
&  [e_{\ol{i},r}, f_{\ol{j},r} ]
	= \dij \sum_{\lambda \in \Lambda(n,r)} (\lambda_{i} + \lambda_{i+1}) 1_{\lambda,r}, \\
& [ e_{i,r},  f_{\ol{j},r} ]  =  \dij (h_{\ol{i},r} - h_{\ol{i+1},r}), \qquad
  [ e_{\ol{i},r}, f_{j,r} ] =  \dij (h_{\ol{i},r} - h_{\ol{i+1},r}); \\
(QR5) \qquad
	&[e_{i, r}, e_{\ol{j}, r}] = [e_{\ol{i}, r}, e_{\ol{j}, r}] = [f_{i, r}, f_{\ol{j}, r}] = [f_{\ol{i}, r}, f_{\ol{j}, r}] = 0, \qquad \mbox{for } |i-j| \ne 1, \\
	&[e_{i, r}, e_{j, r}] = [f_{i, r}, f_{j, r}] = 0, \qquad \mbox{for } |i-j| > 1, \\
	&[e_{i, r}, e_{i+1, r}] = [e_{\ol{i}, r}, e_{\ol{i+1}, r}],
	\qquad [e_{i, r}, e_{\ol{i+1}, r}] = [e_{\ol{i}, r}, e_{i+1, r}],\\
	&[f_{i+1, r}, f_{i, r}] = [f_{\ol{i+1}, r}, f_{\ol{i}, r}],
	\qquad [f_{i+1, r}, f_{\ol{i}, r}] = [f_{\ol{i+1}, r}, f_{i, r}];\\
(QR6) \qquad
	&[e_{i, r}, [e_{i, r}, e_{j, r}]] = [e_{\ol{i}, r}, [e_{i, r}, e_{j, r}]] = 0,\\
	&[f_{i, r}, [f_{i, r}, f_{j, r}]] = [f_{\ol{i}, r}, [f_{i, r}, f_{j, r}]] = 0,
	\qquad \mbox{ for } |i-j| = 1.
\end{align*}
\end{prop}
\begin{proof}
	(QR1)
		Proposition \ref{mulformajrb1} {\rm (iii)} implies
\begin{align*}
	h_{\ol{i}, r}h_{\ol{j}, r} &= {(\sqrt{-1})}^2 \ABJRS( O, E_{i,i}, \bs{ 0 },  r ) \cdot \ABJRS( O, E_{j,j}, \bs{ 0 },  r ) \\
	&= \left\{
\begin{aligned}
&- \ABJRS( O, E_{i,i}+E_{j,j}, \bs{ 0 }, r ), &\mbox{ if } i<j; \\
& \ABJRS( O, O, \bs{\ep}_i, r ), &\mbox{ if } i=j; \\
& \ABJRS( O, E_{i,i}+E_{j,j}, \bs{ 0 }, r ), &\mbox{ if } i>j.
\end{aligned}
\right.
\end{align*}
As a consequence,
	{$h_{\ol{i}, r}h_{\ol{j}, r} + h_{\ol{j}, r}h_{\ol{i}, r} = \dij
	\sum_{\lambda \in \Lambda(n,r)} 2 \lambda_i 1_{\lambda, r} $} is proved.
	The other equations in {\rm(i)} are trivial.

	(QR2) Direct calculation proves it.

	(QR4)
	Only the second equation will be proved. The others  can be proved in a similar way.
	
	If {$|i-j|>1$}, assume {$i<j$}, we have
\begin{align*}
	e_{\ol{i}, r} f_{\ol{j}, r}
		&= {{(\sqrt{-1})}}^2
			\ABJRS( O, E_{i,i+1}, \bs{ 0 },  r )
			\cdot \ABJRS( O, E_{j+1,j}, \bs{ 0 },  r )  \\
		&= - \ABJRS( O, E_{i,i+1} + E_{j+1, j}, \bs{ 0 },  r ),\\
	f_{\ol{j}, r} e_{\ol{i}, r}
		&= {{(\sqrt{-1})}}^2
			\ABJRS( O, E_{j+1,j}, \bs{ 0 },  r )
			\cdot \ABJRS( O, E_{i,i+1}, \bs{ 0 },  r ) \\
		&= \ABJRS( O, E_{i,i+1} + E_{j+1, j}, \bs{ 0 },  r ).
\end{align*}
	If {$|i-j|=1$}, assume {$j = i+1 $},
\begin{align*}
	e_{\ol{i}, r} f_{\ol{i+1}, r}
		&= {{(\sqrt{-1})}}^2
			\ABJRS( O, E_{i,i+1}, \bs{ 0 },  r )
			\cdot \ABJRS( O, E_{i+2,i+1}, \bs{ 0 },  r )  \\
		&= - \ABJRS( O, E_{i,i+1} + E_{i+2, i+1}, \bs{ 0 }, r ), \\
	f_{\ol{i+1}, r} e_{\ol{i}, r}
		&= {{(\sqrt{-1})}}^2
			\ABJRS( O, E_{i+2,i+1}, \bs{ 0 },  r )
			\cdot \ABJRS( O, E_{i,i+1}, \bs{ 0 },  r ) \\
		&= \ABJRS( O, E_{i,i+1} + E_{i+2, i+1}, \bs{ 0 }, r ).
\end{align*}
The assertions in both cases above cause
{$	[ e_{\ol{i}, r}, f_{\ol{j}, r}] = 0 $}.

	If {$i=j$},
\begin{align*}
	e_{\ol{i}, r} f_{\ol{i}, r}
		&= {{(\sqrt{-1})}}^2
			\ABJRS( O, E_{i,i+1}, \bs{ 0 },  r )
			\cdot \ABJRS( O, E_{i+1,i}, \bs{ 0 },  r )  \\
		&= \ABJRS( O, O, \bs{\ep}_i,  r ) + \ABJRS( O, E_{i, i+1} + E_{i+1, i}, \bs{ 0 },  r ), \\
	f_{\ol{i}, r} e_{\ol{i}, r}
		&= {{(\sqrt{-1})}}^2
			\ABJRS( O, E_{i+1,i}, \bs{ 0 },  r )
			\cdot  \ABJRS( O, E_{i,i+1}, \bs{ 0 },  r ) \\
		&= \ABJRS( O, O, \bs{\ep}_{i+1},  r ) - \ABJRS( O, E_{i, i+1} + E_{i+1, i}, \bs{ 0 },  r ),
\end{align*}
which follows
\begin{align*}
	[e_{\ol{i}, r} , f_{\ol{j}, r} ]
		&= \ABJRS( O, O, \bs{\ep}_{i},  r ) + \ABJRS( O, O, \bs{\ep}_{i+1},  r ) \\
		&=\sum_{\lambda \in \Lambda(n,r)} ( \lambda_i+\lambda_{i+1}) 1_{\lambda, r} .
\end{align*}
The proof of relations (QR3), (QR5), (QR6) is similar to the proofs of (QR1), (QR2), (QR4), respectively.
\end{proof}

\section{A New Realization of {$\Uqn$}}\label{sec_realization}
In this section, by using these uniform multiplication formulas in the last section, we will construct a superalgebra $\USn$ presented by a basis and multiplication formulas by generators and then prove that $\USn$ is isomorphic to the universal enveloping algebra $\Uqn$ (over $\mathbb C$). 


By mimicking the algebraic limit process from a polynomial algebra $k[x]$ to its formal power series algebra $k[[x]]$, where $k$ is a field,
we first define a limit algebra $\Qqsn$ of queer Schur superalgebra {$\Qnr$} by taking a direct product. We then extract a subalgebra $\USn$ from $\Qqsn$ which has a natural superalgebra structure. We then prove that $\USn$ is isomorphic to $\Uqn$.

Set
\begin{align*}
    &\Qqsn = \prod_{r \ge 0 } \Qnr,  \\
    &\AJS(A, \bs{j}) = \sum_{r \ge 0 } \AJRS(A, j, r) \in \Qqsn,
        \, \mbox{ for any } A \in \MNZNS(n), \bs{j} \in \NN^n, \\
    &\LS = \{  \AJS(A, \bs{j})  \where   A \in \MNZNS(n), \bs{j} \in \NN^n \}, \\
    &\USn = \tspan\{\AJS(A, \bs{j})   \where  A \in \MNZNS(n), \bs{j} \in \NN^n \}.
\end{align*}

Choose a subset of {$\USn$},
\begin{align*}
	\fsG &= \{ \    \AJS(\bs{O}, \bs{\ep}_i), 	\  \ABJS( O, E_{i,i},\bs{ 0 }), \  \ABJS( E_{j,j+1}, O,\bs{ 0 }), 	\  \ABJS( O, E_{j,j+1},\bs{ 0 }),	 \\
	& \qquad  \ABJS( E_{j+1,j}, O,\bs{ 0 }),	\   \ABJS( O, E_{j+1,j},\bs{ 0 })
	\where   1\leq i\leq n, 1\leq j \leq n-1
	\}.
\end{align*}

In the rest of this section, we will show
$\USn$  is a subsuperalgebra of {$\Qqsn$}  generated by {$\fsG$},
and isomorphic to {$\Uqn$}.

\begin{lem}\label{U(n) basis}
        The set {$\LS=\{  \AJS(A, \bs{j}) \where A \in \MNZNS(n), \bs{j} \in \NN^n\}$}
	forms a basis of {$\USn$}.
\end{lem}
\begin{proof}
It can be proved in the similar way as  \cite[Theorem 6.3.4]{DDF}.
\end{proof}

\begin{prop}\label{A[j]}
The multiplication formulas of
\begin{align*}
& {\rm (i)}  \quad \AJS(\bs{O}, \bs{\ep}_i) \cdot \ABJS( \SE{A}, \SO{A},\bs{ j }), \quad
\ABJS(O, E_{h,h}, \bs{0}) \cdot \ABJS(\SE{A}, \SO{A}, \bs{j}), \\
& {\rm (ii)} \quad \ABJS( E_{h, h+1}, O,\bs{ 0 }) \cdot \ABJS( \SE{A}, \SO{A},\bs{ j }),\quad
 \ABJS( O, E_{h, h+1},\bs{ 0 }) \cdot \ABJS( \SE{A}, \SO{A},\bs{ j }), \\
& {\rm (iii)}  \quad \ABJS( E_{h+1, h}, O,\bs{ 0 }) \cdot \ABJS( \SE{A}, \SO{A},\bs{ j }), \quad
 \ABJS( O, E_{h+1, h},\bs{ 0 }) \cdot \ABJS( \SE{A}, \SO{A},\bs{ j }).
\end{align*}
have the same form with those in
Proposition \ref{mulformajrb1}, \ref{mulformajrb2} and \ref{mulformajrb3},
by replacing all $\AJRS(M, \bs{j}, r)$ with $\AJS(M, \bs{j})$.
\end{prop}
\begin{proof}
According to the definition, $\AJS(A, \bs{j}) = \sum_{r \ge 0 } \AJRS(A, \bs{j}, r)$  for all {$A \in \MNZNS(n)$}.
Observing the multiplication formulas in Proposition \ref{mulformajrb1}, \ref{mulformajrb2} and \ref{mulformajrb3},
the coefficients are independent of $r$.
In the other hand,
since  {$\AJRS(A, \bs{j}, r_1) \AJRS(B, \bs{j}', r_2)=0$}  when  {$r_1 \neq r_2$},
we have
{$\AJS(A, \bs{j}) \AJS(B, \bs{j}')=\sum_{r\geq 0}\AJRS(A, \bs{j}, r)\AJRS(B, \bs{j}', r)$}.
As a consequence, replacing all $\AJRS(M, \bs{j}, r)$ with $\AJS(M, \bs{j})$ in these formulas,
the multiplication formulas of the above elements are derived.
\end{proof}

As a consequence of Proposition \ref{A[j]},  there is an algebra epimorphism
\begin{align}
	\bs{\rho}_r: \USn  \longrightarrow \Qnr,
	\qquad
	\AJS(A, \bs{j}) \mapsto \AJRS(A, \bs{j}, r). \label{def_rho_proj}
\end{align}

For ${\bs{j}},{\bs{j}'}\in \NN^n$, denote $\bs{j}\leq \bs{j}'$ if and only if $j_k\leq j'_k$ for all $k\in[1,n]$.

\begin{cor}\label{OjA0}
For $A\in \MNZN(n)$ and ${\bs j}=(j_1,j_2,\cdots, j_n)\in \NN^n$, we have
\begin{align*}
  \AJS(\bs{O}, \bs{j}) \cdot \ABJS( \SE{A}, \SO{A},\bs{ 0 })
&= \ABJS( \SE{A}, \SO{A},\bs{ j })
		 + \sum_{\bs{k}\in \NN^n,\bs{k}<\bs{j}}
			f_{\bs{j},\bs{k}} \ABJS( \SE{A}, \SO{A},\bs{ k }),
\end{align*}
where {$f_{\bs{j},\bs{k}}  \in \NN$}.
\end{cor}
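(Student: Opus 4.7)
The plan is to proceed by induction on $\snorm{\bs{j}}$, driven by the single multiplication formula
$$\AJS(\bs{O}, \bs{e}_h) \cdot \ABJS(\SE{B}, \SO{B}, \bs{k}) = \ABJS(\SE{B}, \SO{B}, \bs{k}+\bs{e}_h) + \Bigl(\sum_{l=1}^n b_{h,l}\Bigr)\, \ABJS(\SE{B}, \SO{B}, \bs{k}),$$
which is the $\USn$-version of Proposition~\ref{mulformajrb1}(ii) supplied by Proposition~\ref{A[j]}. The base case $\bs{j}=\bs{0}$ is then immediate from Proposition~\ref{mulformajrb1}(i), with no correction terms.

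For the inductive step with $\bs{j}\neq\bs{0}$, I would fix some index $h$ with $j_h\ge 1$ and set $\bs{j}''=\bs{j}-\bs{e}_h$. Applying the displayed identity with $B=\bs{O}$ (so all entries $b_{h,l}$ vanish) first records the factorization $\AJS(\bs{O}, \bs{j})=\AJS(\bs{O}, \bs{e}_h)\cdot\AJS(\bs{O}, \bs{j}'')$. This lets me rewrite the product as $\AJS(\bs{O}, \bs{e}_h)\cdot[\AJS(\bs{O}, \bs{j}'')\cdot\ABJS(\SE{A}, \SO{A}, \bs{0})]$, apply the induction hypothesis to the bracketed factor, and then expand termwise using the same identity. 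The leading $\ABJS(\SE{A}, \SO{A}, \bs{j}'')$ from the bracket produces $\ABJS(\SE{A}, \SO{A}, \bs{j})$, while every remaining contribution is a non-negative integer multiple of $\ABJS(\SE{A}, \SO{A}, \bs{k}')$ for some other $\bs{k}'$.

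The only point requiring care is the bookkeeping that every such $\bs{k}'$ satisfies $\bs{k}'<\bs{j}$. This follows from three observations: $\bs{j}''<\bs{j}$ because $\bs{j}=\bs{j}''+\bs{e}_h$; for any $\bs{k}<\bs{j}''$ arising from the inductive hypothesis we have $\bs{k}<\bs{j}$; and $\bs{k}+\bs{e}_h\le\bs{j}''+\bs{e}_h=\bs{j}$, with equality ruled out by $\bs{k}\neq\bs{j}''$, so $\bs{k}+\bs{e}_h<\bs{j}$. The new coefficients are sums of products of entries $a_{h,l}\in\NN$ with the previously-established $f_{\bs{j}'',\bs{k},A}\in\NN$, hence again lie in $\NN$.

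I do not anticipate a real obstacle: once Proposition~\ref{A[j]}(i) and the commutation $\AJS(\bs{O}, \bs{e}_h)\AJS(\bs{O}, \bs{j}'')=\AJS(\bs{O}, \bs{j})$ are in hand, the argument is just iteration with index bookkeeping. I interpret the declaration ``$f_{\bs{j},\bs{k},A}\in\NN^n$'' in the statement as ``$f_{\bs{j},\bs{k},A}\in\NN$'', since the coefficients produced above are evidently scalars.
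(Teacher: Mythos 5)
Your proposal is correct and follows essentially the same route as the paper's own proof: induction on $\snorm{\bs{j}}$, factoring $\AJS(\bs{O},\bs{j})$ as $\AJS(\bs{O},\bs{e}_h)\cdot\AJS(\bs{O},\bs{j}-\bs{e}_h)$, applying the inductive hypothesis to the inner product and then expanding termwise with the $\USn$-version of Proposition~\ref{mulformajrb1}(ii) supplied by Proposition~\ref{A[j]}, with the same bookkeeping that $\bs{k}+\bs{e}_h<\bs{j}$ and $\bs{j}-\bs{e}_h<\bs{j}$. Your reading of ``$f_{\bs{j},\bs{k},A}\in\NN^n$'' as ``$f_{\bs{j},\bs{k},A}\in\NN$'' also matches the paper's evident intent.
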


\begin{proof}
According to Proposition \ref{A[j]}{\rm (i)}, we have
\begin{align*}
\AJS(\bs{O}, \bs{j})= \prod_{i=1}^n {(\AJS(\bs{O},\bs{\ep}_i) )}^{{j}_i},
\end{align*}
and
\begin{align*}
\AJS( \bs{O}, \bs{\ep}_i )^{ {j}_i } \cdot \ABJS( \SE{A}, \SO{A},\bs{l})
&= \ABJS( \SE{A}, \SO{A}, \bs{l} + {j}_i \bs{\ep}_i  )
 + \sum_{\substack{\bs{k}\in \NN^n, \\ \bs{l} \le  \bs{k}<\bs{l} + {j}_i \bs{\ep}_i }}
			f_{\bs{l},\bs{k}} \ABJS( \SE{A}, \SO{A},\bs{ k }).
\end{align*}
As a consequence,
multiplying {$\AJS( \bs{O}, \bs{\ep}_i )^{ {j}_i } $} on the left side of
{$\ABJS( \SE{A}, \SO{A},\bs{0})$} one by one, where $i$ varies from $n$ to $1$,
the result is then proved
\begin{align*}
\AJS(\bs{O}, \bs{j}) \cdot \ABJS( \SE{A}, \SO{A},\bs{ 0 })
&=\prod_{i=1}^n {(\AJS(\bs{O},\bs{\ep}_i) )}^{\bs{j}_i}  \cdot \ABJS( \SE{A}, \SO{A},\bs{ 0 }) \\
&=\prod_{i=1}^{n-1} {(\AJS(\bs{O},\bs{\ep}_i) )}^{\bs{j}_i}
		\cdot \{ {(\AJS(\bs{O},\bs{\ep}_n) )}^{\bs{j}_n}  \ABJS( \SE{A}, \SO{A},\bs{ 0 }) \} \\
& \qquad \mbox{( $i$ varies from $n$ to $1$) }\\
&= \ABJS( \SE{A}, \SO{A},\bs{ j })
		 + \sum_{\bs{k}\in \NN^n,\bs{k}<\bs{j}}
			f_{\bs{j},\bs{k}} \ABJS( \SE{A}, \SO{A},\bs{ k }).
\end{align*}
%
%
%
\end{proof}

For {$M = (m_{i,j}) \in \MN(n)$}, set
\begin{align*}
& M^+= (m^+_{i,j}), \mbox{ with } m^+_{i,j} = m_{i,j} \mbox{ for all }  i < j,
			\mbox{ and } m^+_{i,j} =0 \mbox{ otherwise}; \\
& M^-= (m^-_{i,j}), \mbox{ with } m^-_{i,j} = m_{i,j} \mbox{ for all }  i > j,
			\mbox{ and } m^-_{i,j} =0 \mbox{ otherwise} ;\\
& M^{\ge 0}= (m^{\ge 0}_{i,j}), \mbox{ with } m^{\ge 0}_{i,j} = m_{i,j} \mbox{ for all }  i \le j,
			\mbox{ and } m^{\ge 0}_{i,j} =0 \mbox{ otherwise}.
\end{align*}
Denote {$M^\pm = M^+ + M^- $}.

For any given {$A = (\SUP{A}) \in \MNZN(n)$},  denote
\begin{align*}
& A^+ =  ( {(\SE{A})}^+ |  {(\SO{A})}^+) , \qquad
  A^{\ge 0} =  ( {(\SE{A})}^{\ge 0} |  {(\SO{A})}^{\ge 0}) ,\qquad
  A^{\pm} = ( {(\SE{A})}^{\pm} |  \SO{A}).
\end{align*}
Recall that $\ABSUM{A}=\SE{A}+\SO{A}$ and $\ABSUM{B}=\SE{B} + \SO{B}$.
Set {$\Tr(A) = \{ i | \SOE{a}_{i,i} = 1 \}$}.

In order to get the triangular multiplication formula, which is a key to the realization problem,
we need to define a partial order for the set {$\MNZN(n)$}.

Similar to \cite{DDPW}, for {$A=(a_{i,j}), B=(b_{i,j}) \in \MN(n)$}, define the order {$\preceq$} as
\begin{displaymath}\label{preorder}
	B \preceq A \iff
\left\{
\begin{aligned}
	&\sum_{i \le s, j\ge t } b_{i,j} \le \sum_{i \le s, j\ge t } a_{i,j}, & \mbox{ for all } s<t; \\
	&\sum_{i \ge s, j\le t } b_{i,j} \le \sum_{i \ge s, j\le t } a_{i,j}, & \mbox{ for all } s>t; \\
\end{aligned}
\right.
\end{displaymath}
We say {$B \prec A $} if {$B \preceq A$} and {$B^{\pm} \ne A^{\pm}$}.
Specially, if $A$ and $B$ are equal under this order, denote $A\simeq B$,
and in this case we have  {$A^{\pm} = B^{\pm}$}.

Notice that
{$\ABSUM{A}^{\pm}$} denotes for {${(\SE{A} + \SO{A})}^{\pm}$},
but not for {$\SE{(A^{\pm})} + \SO{(A^{\pm})} = {(\SE{A})}^{\pm} + \SO{A}$}.

For any {$A=(\SUP{A}), B=(\SUP{B}) \in \MNZNS(n)$},
abusing the notations, define {$B \preceq A $} when {$  \ABSUM{B} \preceq \ABSUM{A}$},
and specially,   we say {$B \prec A$} if they satisfy one of the following conditions:
\begin{align*}
&(a) \quad \ABSUM{B} \prec \ABSUM{A},  \mbox{ or } \\
&(b) \quad \ABSUM{B}^{\pm} = \ABSUM{A}^{\pm}  \mbox{ and }  \Tr(B) \subset \Tr(A) .
\end{align*}

Set {$\fsP = [1,n] \times [1,n]$},
and its subsets:
\begin{align*}
	&\fsP^+ = \{(i,j) \in [1,n] \times [1,n] \where  i<j \}, \\
	&\fsP^0 = \{(i,j) \in [1,n] \times [1,n] \where i=j \}, \\
	&\fsP^- = \{(i,j) \in [1,n] \times [1,n] \where i>j \}.
\end{align*}

Define partial order  {$\le$}  over {$ \fsP $} as follows:
\begin{align*}
&(i_1, j_1) \le (i_2, j_2) \le (i_3, j_3),  \qquad \forall
	(i_1, j_1) \in \fsP^-,  (i_2, j_2) \in \fsP^0,  (i_3, j_3) \in \fsP^+ ; \\
&(i,j) \le (i',j') \iff
	\left\{
	\begin{aligned}
		& j > j', \mbox{ or }\\
		& j=j', i > i',  \\
	\end{aligned}
	\right.
	\qquad \forall (i,j), (i', j') \in \fsP^+ ; \\
&(i,i) \le (i',i') \iff i < i',
	\qquad \forall (i,i), (i', i') \in \fsP^0; \\
&(i,j) \le (i',j') \iff
	\left\{
	\begin{aligned}
		& j < j', \mbox{ or }\\
		& j=j', i < i', \\
	\end{aligned}
	\right.
	\qquad \forall (i,j), (i', i') \in \fsP^-;
\end{align*}
Denote {$(i,j) < (i', j')$} if  {$(i,j) \le (i', j')$} and {$(i,j) \ne (i', j')$}.

For example, this partial order over {$ [1,3] \times [1,3] $} is:
\begin{align*}
(2,1)  \le  (3,1)  \le  (3,2) \le (1,1)  \le  (2,2)  \le  (3,3)  \le  (2,3)  \le  (1,3)  \le  (1,2).
\end{align*}
This order is deduced from the orders $\leq_1$ and $\leq_2$ in  \cite[(8.2.1)]{DG}.

Given {$A=(\SUP{A})  \in \MNZN(n),  $} for {$ (i,j) \in \fsP^+$}, set
\begin{align*}
	& U^{A, \bs{0}}_{(i,j)} =
			\ABJS( O, \SOE{a}_{i,j} E_{i,i+1},\bs{ 0 })  \ABJS( \SEE{a}_{i,j} E_{i,i+1}, O,\bs{ 0 })
			\cdot \prod_{i< h<j}^{(h,h+1)_{<}} \ABJS( {a}_{i,j} E_{h,h+1}, O,\bs{ 0 });
\end{align*}
for {$ (i,j) \in \fsP^-$}, set
\begin{align*}
	& L^{A, \bs{0}}_{(i,j)} =
			 \ABJS( O, \SOE{a}_{i,j} E_{i,i-1},\bs{ 0 }) \ABJS( \SEE{a}_{i,j} E_{i,i-1}, O,\bs{ 0 })
			\cdot \prod_{j \leq h\leq i-2}^{(h+1,h)_<} \ABJS( {a}_{i,j} E_{h+1,h}, O,\bs{ 0 });
\end{align*}
for {$ (i,i) \in \fsP^0$}, set
\begin{align*}
	& D^{A, \bs{0}}_{(i,i)} =  \ABJS( O, \SOE{a}_{i,i} E_{i,i},\bs{ 0 }) .
\end{align*}

We shall apply Proposition  \ref{A[j]}
to show the important theorem followed, which is called the triangular multiplication formula.

\begin{thm}\label{triang2}
	Suppose {$A=(\SUP{A})  \in \MNZNS(n)$}, then
\begin{align*}
	& \prod_{(i,j) \in \fsP^- }^{\le} L^{A, \bs{0}}_{(i,j)}
		\prod_{(i,i) \in \fsP^0 }^{\le} D^{A, \bs{0}}_{(i,i)}
		\prod_{(i,j) \in \fsP^+ }^{\le} U^{A, \bs{0}}_{(i,j)}
	 = \pm\AJS(A,\bs{0})
	 + \sum_{\substack{
	 		B \in \MNZNS(n) \\
	 		B \prec A \\
	 		{\bs{j}} \in {\NN}^n
	} } g_{B,A, \bs{j} }\AJS(B, {\bs{j}}),
\end{align*}
where $ g_{B,A,\bs{j}} \in \QQ$, and there are only finite numbers of non-zero {$g_{B,A,\bs{j}}$}.
%
%
\end{thm}

\begin{proof}
	According to  Proposition \ref{A[j]} {\rm(ii)} and  {\rm(iii)}, for any {$k \in \NN$}, we have
\begin{equation}
\begin{aligned}\label{devpow}
	&  \ABJS( k E_{h, h+1}, O,\bs{ 0 }) = \frac{1}{k!}{( \ABJS( E_{h, h+1}, O,\bs{ 0 }))}^k,  \\
	&  \ABJS( k E_{h+1, h}, O,\bs{ 0 }) = \frac{1}{k!}{(\ABJS( E_{h+1, h}, O,\bs{ 0 }))} ^k.
\end{aligned}
\end{equation}

Recall that  $\ABSUM{A}=\SE{A} + \SO{A}$.
According to Proposition \ref{A[j]},
the expressions of  $\ABJS( E_{h, h+1}, O,\bs{ 0 }) \cdot \AJS( A,\bs{ j })$
and $\ABJS( E_{h, h+1}, O,\bs{ 0 }) \cdot \AJS( A,\bs{ j })$ have the form
\begin{align*}
\sum_{\substack{ X \in \MNZNS(n) \\{\bs{j}}' \in {\NN}^n }} f_{X,A, \bs{j}'} \AJS( X,\bs{j}' )
\end{align*}
where $f_{X,A, \bs{j}'} \in \QQ$ and
$\ABSUM{X}=\ABSUM{A}+E_{h,k}-E_{h+1,k}$, $\ABSUM{A}+E_{h,h+1}$ or $\ABSUM{A}-E_{h+1,h}$ with $k\neq h,h+1$.
In the other hand,  the expressions of
$\ABJS( E_{h+1, h}, O,\bs{ 0 }) \cdot\AJS( A,\bs{ j })$,
$\ABJS( O, E_{h+1, h},\bs{ 0 }) \cdot \AJS( A,\bs{ j })$
are of  the form
\begin{align*}
\sum_{\substack{ X \in \MNZNS(n) \\{\bs{j}}' \in {\NN}^n }}  f'_{X,A, \bs{j}'} \AJS( X,\bs{j}' )
\end{align*}
where $f'_{X,A, \bs{j}'} \in \QQ $
 and $\ABSUM{X}=\ABSUM{A}-E_{h,k}+E_{h+1,k}$, $\ABSUM{A}-E_{h,h+1}$ or $\ABSUM{A}+E_{h+1,h}$ with $k\neq h,h+1$.
These formulas are similar to \cite[Proposition 6.6]{DG}.


The result is proved  in three steps.

{\textbf{Step 1.}}
Similar to  \cite[Theorem 9.1]{DG}, applying Proposition \ref{A[j]} {\rm(ii)},
\begin{align*}
\prod_{(i,j) \in \fsP^+ }^{\le}  U^{A, \bs{0}}_{(i,j)}
&=	 \pm \AJS( {A}^+, \bs{ 0 })
	+	\sum_{\substack{
				C''\in \MNZNS(n) \\
				\ABSUM{C''} \prec  {\ABSUM{A}^+},
				\bs{j}'  \in \NN^n
			} }
			f_{C'',A, \bs{j}'} \AJS( {C''}, \bs{j}' )
\end{align*}
where $f_{C'',A, \bs{j}'} \in \QQ $, $\SE{C''}$ and $\SO{C''}$   are  strictly upper triangle matrices.

{\textbf{Step 2.}}
For any {$M \in \MNZN(n)$}, Proposition \ref{A[j]}{\rm (i)} implies
\begin{align*}
	\ABJS( O, E_{h,h},\bs{ 0 }) \cdot \AJS({M},\bs{ j })
		=\sum_{\substack{ X \in \MNZNS(n) \\{\bs{j}}' \in {\NN}^n }}
			f_{X,M, \bs{j}' }\AJS({X}, \bs{j}')
\end{align*}
where $f_{X,M, \bs{j}'} \in \QQ$ and {$\ABSUM{X} = \ABSUM{M}$},
{$\ABSUM{M}+E_{h,h}$} or {$\ABSUM{M}-E_{h,h}$}.
Direct computation shows $\ABSUM{X}^{\pm} = \ABSUM{M}^{\pm}$.
As a consequence,
\begin{equation}\label{dandu}
\begin{aligned}
&\prod_{(i,i) \in \fsP^0 }^{\le} D^{A, \bs{0}}_{(i,i)}
		\prod_{(i,j) \in \fsP^+ }^{\le} U^{A, \bs{0}}_{(i,j)}   \\
&=
		 \pm \prod_{(i,i) \in \fsP^0 }^{\le} 	
		D^{A, \bs{0}}_{(i,i)} \cdot  \AJS( {A}^+, \bs{ 0 })
		+
		\prod_{(i,i) \in \fsP^0 }^{\le} 	
		\sum_{\substack{
				C''\in \MNZNS(n) \\
				\ABSUM{C''} \prec  {\ABSUM{A}}^+,
				\bs{j}'  \in \NN^n
			} }	 D^{A, \bs{0}}_{(i,i)}	
				\cdot f_{C'',A^+, \bs{j}'} \AJS( {C''}, \bs{j}')   \\
&=	\pm\AJS( {A}^{\ge 0}, \bs{ 0 })
		+ \sum_{\substack{
	 				B' \in \MNZNS(n)\\
					{\ABSUM{B'}}^{\pm} = {\ABSUM{A}}^+\\
	 				\Tr(B') \subset \Tr(A)\\
	 				\bs{j}' \in {\NN}^n
		} }   f_{B',A, \bs{j}' } \AJS(B', \bs{j}')
	 + \sum_{\substack{
				C' \in \MNZNS(n)\\
				\ABSUM{C'} \prec  \ABSUM{A}^+ \\
				\bs{j}''  \in \NN^n
			} }  f_{C',A, \bs{j}''}  \AJS( {C'}, \bs{j}''),
\end{aligned}
\end{equation}
where $f_{B',A, \bs{j}'}, f_{C',A, \bs{j}'' }\in  \QQ$
and $\SE{B}$, $\SE{C}$, $\SO{B}$, $\SO{C}$  are  upper triangular matrices.

Next  we consider
\begin{align*}
	\prod_{(i,j) \in \fsP^- }^{\le} L^{A, \bs{0}}_{(i,j)}\AJS( {A}^{\ge 0}, \bs{ 0 }),
	\prod_{(i,j) \in \fsP^- }^{\le} L^{A, \bs{0}}_{(i,j)}\AJS( {B'}, \bs{j}'),
	\prod_{(i,j) \in \fsP^- }^{\le} L^{A, \bs{0}}_{(i,j)}\AJS( {C'}, \bs{j}'').
\end{align*}
{\textbf{Step 3.}}
Similar to  \cite[Theorem 9.1]{DG}, applying Proposition \ref{A[j]} {\rm(iii)},
\begin{equation}\label{pftri1}
\begin{aligned}
&\prod_{(i,j) \in \fsP^- }^{\le} L^{A, \bs{0}}_{(i,j)} \AJS( {A}^{\ge 0}, \bs{ 0 })    \\
&=	\pm \AJS( {A}, \bs{ 0 })
	 + \sum_{\substack{
			B\in \MNZNS(n) \\
			{\ABSUM{B}}^{\pm}  =  {\ABSUM{A}}^{\pm}   \\
			B \ne \pm A   \\
			\Tr(B) \subset \Tr(A) \\
			\bs{j}' \in {\NN}^n
		}} f_{B,A, \bs{j}' }\AJS( B,  \bs{ j }' )
	+ \sum_{\substack{
			C \in \MNZNS(n)  \\
			\ABSUM{C} \prec \ABSUM{A} \\
			\bs{j}'' \in {\NN}^n
		}} f_{C,A, \bs{j}''}\AJS(  C, \bs{ j }'' ),
\end{aligned}
\end{equation}
where $f_{B,A, \bs{j}'}, f_{C,A, \bs{j}''}\in  \QQ$.

For every {$X = B'$} or {$X=C'$} appearring in \eqref{dandu},
\begin{equation}\label{pftri2}
\begin{aligned}
&\prod_{(i,j) \in \fsP^- }^{\le} L^{A, \bs{0}}_{(i,j)}\ABJS( \SE{X}, \SO{X},\bs{ 0 })
=
	\! \sum_{\substack{
			B\in \MNZNS(n) \\
			{\ABSUM{B}}^{\pm}  =  {\ABSUM{A}}^{\pm}   \\
			B \ne \pm A   \\
			\Tr(B) \subset \Tr(A) \\
			\bs{j}' \in {\NN}^n
		}}  	 f_{B,X, \bs{j}'}\AJS( B,\bs{j}' )
	 \!+	\!\sum_{\substack{
				C\in \MNZNS(n) \\
				\ABSUM{C}\prec \ABSUM{A}\\
				\bs{j}'' \in {\NN}^n
		}}  	 f_{C,X, \bs{j}''}\AJS( C,\bs{ j }'' ),
\end{aligned}
\end{equation}
where $f_{B,X, \bs{j}'}, f_{C,X, \bs{j}''}\in \QQ$.

Summarize the equations \eqref{pftri1} and \eqref{pftri2},  then we have
\begin{align*}
	& \prod_{(i,j) \in \fsP^- }^{\le} L^{A, \bs{0}}_{(i,j)}
		\prod_{(i,i) \in \fsP^0 }^{\le} D^{A, \bs{0}}_{(i,i)}
		\prod_{(i,j) \in \fsP^+ }^{\le} U^{A, \bs{0}}_{(i,j)}  \nonumber \\
	 &= \pm\AJS(A,\bs{0})
	 + \sum_{\substack{
	 				B \in \MNZNS(n) \\
					\ABSUM{B}^{\pm} = \ABSUM{A}^{\pm}  \\
	 				B \ne   A  \\
	 				\Tr(B) \subset \Tr(A)\\
	 				\bs{j}' \in {\NN}^n
		} } 	g_{B,A, \bs{j}'}\AJS(B, {\bs{j}'})
	 + \sum_{\substack{
	 		C \in \MNZNS(n) \\
	 		\ABSUM{C} \prec \ABSUM{A}\\
	 		\bs{j}'' \in {\NN}^n
	} } g_{C,A, \bs{j}'' }\AJS(C, {\bs{j}''})  \nonumber \\
	&= \pm\AJS(A,\bs{0})
	 + \sum_{\substack{
	 		B \in \MNZNS(n) \\
	 		B \prec A \\
	 		\bs{j}  \in {\NN}^n
	} } g_{B,A, \bs{j} }\AJS(B, {\bs{j}}),
\end{align*}
where $ g_{B,A,\bs{j}' },  g_{C,A, \bs{j}'' }, g_{B,A,\bs{j} }\in \QQ$.
%
\end{proof}
\begin{thm}\label{U(n)Alg}
The superalgebra $\USn$ is generated by
\begin{align*}
	\fsG &= \{ \    \AJS(\bs{O}, \bs{\ep}_i), 	\  \ABJS( O, E_{i,i},\bs{ 0 }), \  \ABJS( E_{j,j+1}, O,\bs{ 0 }), 	\  \ABJS( O, E_{j,j+1},\bs{ 0 }),	 \\
	& \qquad  \ABJS( E_{j+1,j}, O,\bs{ 0 }),	\   \ABJS( O, E_{j+1,j},\bs{ 0 })
	\where   1\leq i\leq n, 1\leq j \leq n-1
	\}.
\end{align*}
\end{thm}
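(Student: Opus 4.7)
The plan is to show that the unital subalgebra $\A$ of $\Qqsn$ generated by $\fsG$ coincides with the span $\USn$. Since every element of $\fsG$ is a basis vector in $\LS$, we have $\A \subseteq \USn$ as vector subspaces of $\Qqsn$. Consequently, once we prove the reverse inclusion $\USn \subseteq \A$, we get $\USn = \A$ as vector spaces, and since $\A$ is by construction closed under multiplication, this simultaneously establishes that $\USn$ is a subalgebra and that $\fsG$ generates it. The main task is therefore to express each basis element $\AJS(A,\bs j)$ of $\USn$ as a polynomial in the generators $\fsG$.

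I would carry out a double induction. The outer induction runs on the partial order $\prec$ on $\MNZNS(n)$, and the inner induction on $\snorm{\bs j}$. For the outer base case, $\AJS(\bs O, \bs 0)$ is the identity of $\Qqsn$ and so belongs to $\A$ automatically. For the outer inductive step, fix $A \in \MNZNS(n)$ and suppose $\AJS(B,\bs j_B) \in \A$ for every $B \prec A$ and every $\bs j_B$. Theorem \ref{triang2} expresses a specific ordered product of factors of the shape $\ABJS(kE_{h,h+1},O,\bs 0)$, $\ABJS(kE_{h+1,h},O,\bs 0)$, $\ABJS(O,\delta E_{i,j},\bs 0)$ with $\delta\in\{0,1\}$ and $k\in\NN$ as $\pm\AJS(A,\bs 0) + \sum_{B\prec A} g_{B,A}\AJS(B,\bs j_B)$. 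The identities $\ABJS(kE_{h,h+1},O,\bs 0) = \tfrac{1}{k!}\bigl(\ABJS(E_{h,h+1},O,\bs 0)\bigr)^k$ and the analogous one for $E_{h+1,h}$ (as recorded at the start of the proof of Theorem \ref{triang2}) exhibit these factors as polynomials in $\fsG$; each odd factor $\ABJS(O,\delta E_{i,j},\bs 0)$ is either trivial or a scalar multiple of a generator. Hence the LHS belongs to $\A$. By the outer inductive hypothesis, the sum on the RHS lies in $\A$, so we can solve for $\AJS(A,\bs 0)$ and conclude $\AJS(A,\bs 0) \in \A$.

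For the inner induction, I would invoke Corollary \ref{OjA0}. Equation \eqref{OeA0} shows $\AJS(\bs O,\bs j)$ is a product of the generators $\AJS(\bs O, \bs e_i)$, so $\AJS(\bs O,\bs j) \in \A$. Corollary \ref{OjA0} then gives
\begin{equation*}
\AJS(\bs O,\bs j)\cdot \AJS(A,\bs 0) = \AJS(A,\bs j) + \sum_{\bs k < \bs j} f_{\bs j,\bs k,A}\AJS(A,\bs k),
\end{equation*}
where the LHS belongs to $\A$ by the outer step together with $\AJS(\bs O,\bs j) \in \A$, and by the inner inductive hypothesis on $\snorm{\bs j}$ every term in the sum on the RHS is in $\A$. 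Therefore $\AJS(A,\bs j) \in \A$, completing the induction.

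The main obstacle is the verification in the outer inductive step that the specific product appearing on the LHS of Theorem \ref{triang2} is genuinely a polynomial in the elements of $\fsG$ (and not just in $\LS$), since the formulas involve the data-dependent entries $\SEE a_{i,j}$, $\SOE a_{i,j}$ and $a_{i,j}$. This is resolved by the factorial-power identities above, which rewrite each $\ABJS(kE_{h,h\pm 1},O,\bs 0)$ as $\tfrac{1}{k!}$ times a power of the single-step generator, and by the observation that the $\bs{0}/\bs{1}$-valued factors either drop out or produce one generator. Once that is in place, the argument above yields $\LS \subseteq \A$, hence $\USn \subseteq \A$, and together with the trivial inclusion $\A \subseteq \USn$ this proves $\USn = \A$ as algebras.
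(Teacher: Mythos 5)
Your proposal is correct and follows essentially the same route as the paper's own proof: the inclusion of the generated subalgebra in $\USn$ via the multiplication formulas, the reduction of $\AJS(A,\bs{j})$ to $\AJS(A,\bs{0})$ by Corollary \ref{OjA0} with induction on $\bs{j}$, and the induction on $\preceq$ using the triangular formula of Theorem \ref{triang2} together with the factorial-power identities. The only point to tighten is that $\A\subseteq\USn$ does not follow merely from $\fsG\subseteq\LS$ but from the closure of the span $\USn$ under left multiplication by the generators, which is exactly Proposition \ref{A[j]} — the same fact the paper cites and one you already rely on elsewhere.
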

\begin{proof}
Set the subsuperalgebra of {$\Qqsn$} generated by {$\fsG$}
to be $\mathfrak{u}(n)$.
According to Proposition  \ref{A[j]}, $\mathfrak{u}(n)$ is a subspace of $\USn$.


According the formula in Theorem \ref{triang2},
by induction  about the order `$\preceq$' over $\MNZNS(n)$, for any $A\in\MNZNS(n)$,
{$\AJS(A, \bs{0})$} can be generated from {$\fsG$}.

For any  {$\bs{j} \in \NN^n$},
Corollary \ref{OjA0} shows {$\AJS(A, \bs{j})$}
can be derived from $\AJS(A,\bs{0})$ and $\AJS(\bs{O},\bs{\ep}_i)$ with $1\leq i\leq n$.
Then $\USn$ is a subspace of $\mathfrak{u}(n)$,
hence $\USn = \mathfrak{u}(n)$ is a superalgebra generated by  $\fsG$.
\end{proof}



Next, we want to prove that $\USn$ is isomorphic to $\Uqn$.
The universal enveloping algebra $\Uqn$ of $\qn$
 is a superalgebra defined by generators and relations in  \cite{DW2} and \cite{LS}.

\begin{defn}
{$\Uqn$} is generated by
{$\{ h_{i}, h_{\ol{i}}, e_j, e_{\ol{j}}, f_{j}, f_{\ol{j}} \where 1 \le i \le n, 1 \le j \le n-1 \}$}
subject to the following relations:
\begin{align*}
(QS1) \qquad
&	[h_{i}, h_{j}] =0, \qquad [h_{i}, h_{\ol{j}}] = 0,
	\qquad [h_{\ol{i}}, h_{\ol{j}}] = \dij 2 h_{i}; \\
(QS2) \qquad
	&[h_{i}, e_{j}] = (\bs{\ep}_{i}, \alpha_j) e_{j}, \qquad
	[f_{i}, e_{\ol{j}}] = (\bs{\ep}_{i}, \alpha_j) e_{\ol{j}}, \\
	&[h_{i}, f_{j}] = -(\bs{\ep}_{i}, \alpha_j) f_{j}, \qquad
	[h_{i}, f_{\ol{j}}] = -(\bs{\ep}_{i}, \alpha_j) f_{\ol{j}}, \\
(QS3)  \qquad
&[h_{\ol{i}}, e_{j} ] = (\bs{\ep}_{i}, \alpha_j)	e_{\ol{j}},  \qquad
[h_{\ol{i}}, f_{j}] = -(\bs{\ep}_{i}, \alpha_j) f_{\ol{j}}, \\
&[h_{\ol{i}}, e_{\ol{j}}] =
\left\{
\begin{aligned}
	&e_{j} \qquad \mbox{ if } i=j \mbox{ or } j+1,  \\
	& 0 \qquad \mbox{otherwise,}
\end{aligned}
\right.  \\
&[h_{\ol{i}}, f_{\ol{j}}] =
\left\{
\begin{aligned}
	&f_{j} \qquad \mbox{ if } i=j \mbox{ or } j+1,  \\
	& 0 \qquad \mbox{otherwise,}
\end{aligned}
\right. \\
(QS4)  \qquad
	&[e_{i}, f_{j}] = \dij (h_{i} - h_{i+1}), \qquad
	[e_{\ol{i}}, f_{\ol{j}}] = \dij (h_{i} + h_{i+1}), \\
	&[e_{\ol{i}}, f_{j}] = \dij (h_{\ol{i}} - h_{\ol{i+1}}), \qquad
	[e_{i}, f_{\ol{j}}] = \dij (h_{\ol{i}} - h_{\ol{i+1}}), \\
(QS5) \qquad
	&[e_{i}, e_{\ol{j}}] = [e_{\ol{i}}, e_{\ol{j}}] = [f_{i}, f_{\ol{j}}] = [f_{\ol{i}}, f_{\ol{j}}] = 0, \qquad \mbox{for } |i-j| \ne 1, \\
	&[e_{i}, e_{j}] = [f_{i}, f_{j}] = 0, \qquad \mbox{for } |i-j| > 1, \\
	&[e_{i}, e_{i+1}] = [e_{\ol{i}}, e_{\ol{i+1}}],
	\qquad [e_{i}, e_{\ol{i+1}}] = [e_{\ol{i}}, e_{i+1}],\\
	&[f_{i+1}, f_{i}] = [f_{\ol{i+1}}, f_{\ol{i}}],
	\qquad [f_{i+1}, f_{\ol{i}}] = [f_{\ol{i+1}}, f_{i}]; \\
(QS6) \qquad
	&[e_{i}, [e_{i}, e_{j}]] = [e_{\ol{i}}, [e_{i}, e_{j}]] = 0,\\
	&[f_{i}, [f_{i}, f_{j}]] = [f_{\ol{i}}, [f_{i}, f_{j}]] = 0,
	\qquad \mbox{ for } |i-j| = 1.
\end{align*}
\end{defn}

Denote
\begin{align*}
	&H_{i}= \AJS(\bs{O}, \bs{\ep}_i), \qquad \qquad
	H_{\ol{i}}=  {\sqrt{-1}} \ABJS( O, E_{i,i},\bs{ 0 }), \quad 1\leq i\leq n,\\
	&E_{j}=\ABJS( E_{j,j+1}, O,\bs{ 0 }), \quad
	E_{\ol{j}}={\sqrt{-1}} \ABJS( O, E_{j,j+1},\bs{ 0 }),\\
	&F_{j}=\ABJS( E_{j+1,j}, O,\bs{ 0 }), \quad
	F_{\ol{j}} ={\sqrt{-1}} \ABJS( O, E_{j+1,j},\bs{ 0 }), \quad 1\leq j<n.
\end{align*}

\begin{thm}\label{realuqn}
There is a superalgebra isomorphism
	{$ \pi:\Uqn\longrightarrow \USn $}
mapping
\begin{align*}
 &h_i\mapsto H_i,	\qquad h_{\bar{i}}\mapsto H_{\bar{i}}, \\
 &e_j\mapsto E_j,	\qquad e_{\bar{j}}\mapsto E_{\bar{j}},\\
 &f_j\mapsto F_j,	\qquad f_{\bar{j}}\mapsto F_{\bar{j}},
\end{align*}
for all $1 \leq i\leq n, 1\leq j<n$.
\end{thm}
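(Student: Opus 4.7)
The plan is to establish the isomorphism $\pi$ in three stages: well-definedness as an algebra homomorphism, surjectivity, and injectivity. For well-definedness, I will verify that the images $H_i,H_{\bar i},E_j,E_{\bar j},F_j,F_{\bar j}$ satisfy the defining relations (QS1)--(QS6) of $\Uqn$. This reduces to Proposition~\ref{propRel1}, which already establishes the analogous relations (QR1)--(QR6) inside each $\Qqsc$. The key point is that $H_i=\AJS(\bs{O},\bs{e}_i)=\sum_{r,\lambda}\lambda_i\,\phi_{(\lambda|O)}$ plays the role of the ``missing'' generator $h_i$: the right-hand sides of (QR1)--(QR6) involve $\sum_\lambda\lambda_i\,1_{\lambda,r}$, which is precisely the $r$-th component of $H_i$. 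Collecting (QR1)--(QR6) over all $r\geq 0$ in the direct product $\Qqsn$ then produces (QS1)--(QS6) verbatim; the relations $[H_i,H_j]=[H_i,H_{\bar j}]=0$ not explicitly displayed in (QR1) follow since the $\phi_{(\lambda|O)}$ are orthogonal idempotents commuting with every $h_{\bar j,r}$.

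Surjectivity is an immediate corollary of Theorem~\ref{U(n)Alg}, which asserts that $\USn$ is generated by $\fsG$, since $\fsG$ coincides, up to the invertible factor $\ep$, with $\pi(\{h_i,h_{\bar i},e_j,e_{\bar j},f_j,f_{\bar j}\})$.

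For injectivity, I will exploit the PBW basis $\{u^A\mid A\in M(\NN|\ZZ_2)\}$ of $\Uqn$, which is indexed by a set in natural bijection with $\MNZNS(n)\times\NN^n$ (the index set of the basis of $\USn$ from Proposition~\ref{U(n) basis}) via $A\leftrightarrow(A',\bs{j})$, where $A'$ is obtained from $A$ by zeroing its even diagonal and $j_i:=\SEE{a}_{i,i}$. I will prove that the transition matrix from $\{\pi(u^A)\}$ to $\{\AJS(A',\bs{j})\}$ is upper-triangular with $\pm 1$'s on the diagonal, with respect to a total order refining both the order $\prec$ on $\MNZNS(n)$ from Theorem~\ref{triang2} and the coordinate order on $\NN^n$ from Corollary~\ref{OjA0}. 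The $F$-$H_{\bar i}$-$E$-portion of $\pi(u^A)$ is exactly the product on the left-hand side of Theorem~\ref{triang2}, which yields $\pm\AJS(A',\bs{0})$ plus $\prec$-lower terms; the remaining factor $\prod_i H_i^{\SEE{a}_{i,i}}$ from $u^A$ equals $\AJS(\bs{O},\bs{j})$ by iterating Proposition~\ref{mulformajrb1}(ii) with $A=\bs{O}$, and by Corollary~\ref{OjA0} its action promotes each $\AJS(B,\bs{0})$ to $\AJS(B,\bs{j})$ plus terms with strictly smaller $\bs{k}\in\NN^n$. Assembling these contributions gives
\begin{equation*}
\pi(u^A)=\pm\,\AJS(A',\bs{j})+\sum_{(B,\bs{k})\prec(A',\bs{j})}c_{B,\bs{k}}\,\AJS(B,\bs{k}),
\end{equation*}
so the transition matrix is invertible and $\pi$ is injective.

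The principal technical difficulty will be the design of a total order on $\MNZNS(n)\times\NN^n$ that is compatible with both the order $\prec$ of Theorem~\ref{triang2} on matrices and the coordinate order on $\NN^n$ appearing in Corollary~\ref{OjA0}, so that the action of $\AJS(\bs{O},\bs{j})$ on the $\prec$-lower matrix terms produced by Theorem~\ref{triang2} preserves triangularity. Once such a refinement is fixed, the non-vanishing of the leading coefficient is forced by the $\pm 1$ of Theorem~\ref{triang2} together with the leading-term computation of Corollary~\ref{OjA0}, and the result follows.
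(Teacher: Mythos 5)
Your proposal is correct, and for the homomorphism part it coincides with the paper's proof: both verify (QS1)--(QS6) for $H_i,H_{\ol{i}},E_j,E_{\ol{j}},F_j,F_{\ol{j}}$ by summing the $r$-level relations (QR1)--(QR6) of Proposition~\ref{propRel1} over all $r\ge 0$, with $H_i=\AJS(\bs{O},\bs{e}_i)$ supplying the even Cartan generator. Where you diverge is the bijectivity. The paper's own argument is extremely terse: it exhibits the bijection $\tau(A)=(A^{\pm},\bs{j}(A))$ between the index set of the PBW basis of $\Uqn$ and that of the basis $\LS$ of $\USn$ (Proposition~\ref{U(n) basis}), concludes the two spaces are isomorphic as vector spaces, and then asserts that $\pi$ is an isomorphism. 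As stated this is a non sequitur for infinite-dimensional spaces --- equinumerous bases do not make a \emph{given} homomorphism bijective --- so what you add is genuinely needed: surjectivity from Theorem~\ref{U(n)Alg}, and injectivity from the unitriangular transition matrix between $\{\pi(u^A)\}$ and $\LS$ obtained by combining Theorem~\ref{triang2} with Corollary~\ref{OjA0}. This is evidently the argument the paper intends (it is exactly the mechanism behind Theorem~\ref{U(n)Alg}), and your version makes it explicit. Two small points to watch when you execute it: the factors $H_i^{\SEE{a}_{i,i}}$ sit in the \emph{middle} of $u^A$, between the $F$-block and the $E$-block, so you must either commute them into position using Proposition~\ref{mulformajrb1}(ii) (which only produces terms with the same matrix and unchanged or equal $\bs{j}$, hence harmless for triangularity) or apply Corollary~\ref{OjA0} to the $E$-block first and then track the $F$-block's action on $\AJS(B,\bs{j})$ for general $\bs{j}$ via Proposition~\ref{mulformajrb3}; and the identification of the $F$-$H_{\ol{i}}$-$E$ portion with the left side of Theorem~\ref{triang2} holds only up to the nonzero scalars $k!$ and powers of $\ep$, which of course do not affect invertibility of the transition matrix. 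With the total order refining both $\prec$ and the coordinate order fixed as you describe, the argument closes.
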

\begin{proof}
To prove {$\pi$} is a superalgebra homomorphism,
it is enough to show
{$H_i$}, {$H_{\ol{i}}$}, {$E_j$}, {$E_{\ol{j}}$}, {$F_j$}, {$F_{\ol{j}}$}
 satisfy the relations in (QS1)-(QS6).

Applying Proposition \ref{propRel1} and \ref{A[j]}, we  will show the relations.

The definition shows
\begin{align*}
	&H_{i} =  \sum_{r \ge 0 } ( \sum_{\lambda \in \Lambda(n,r)} \lambda_i 1_{\lambda, r} )
		= \sum_{r \ge 0 } \AJRS(\bs{O}, \bs{\ep}_i, r), \quad
	H_{\ol{i}} = \sum_{r \ge 0 } h_{\ol{i},r}
		=\sum_{r \ge 0 } {{(\sqrt{-1})}} \cdot \ABJRS( O, E_{i,i}, \bs{ 0 },  r ), \\
	&E_{j} = \sum_{r \ge 0 } e_{j,r}, \quad
	E_{\ol{j}} = \sum_{r \ge 0 } e_{\ol{j},r}, \quad
	F_{j} = \sum_{r \ge 0 } f_{j,r}, \quad
	F_{\ol{j}} = \sum_{r \ge 0 } f_{\ol{j},r}.
\end{align*}
Since the formulas for queer Schur superalgebra in Proposition \ref{propRel1}(QR3), (QR5) and (QR6) are independent with the diagonal entries of $A$, through
folding the $r$-level to all $r\geq 0$,
we can derive relations (QS3), (QS5), (QS6).

For (QS1), as
\begin{align*}
	H_i H_j
		=\sum_{r \ge 0 } \AJRS(\bs{O}, \bs{\ep}_i, r) \sum_{r \ge 0 } \AJRS(\bs{O}, \bs{\ep}_j, r)
		=\sum_{r \ge 0 } \AJRS(\bs{O}, \bs{\ep}_i + \bs{\ep}_j, r)
		= H_j H_i.
\end{align*}
	which implies
	{$ [H_i, H_j ] = 0$}.
And
\begin{align*}
	H_{i} H_{\ol{j}} &= \sum_{r \ge 0 }  \AJRS(\bs{O}, \bs{\ep}_i, r)
			\cdot \sum_{r \ge 0 } {{(\sqrt{-1})}} \cdot \ABJRS( O, E_{j,j}, \bs{ 0 },  r )
		= {{(\sqrt{-1})}} \sum_{r \ge 0 } \ABJRS( O, E_{j,j}, \bs{\ep}_i,  r )
		= H_{\ol{j}} H_{i}
\end{align*}
implies
{$	[H_i, H_{\ol{j}}] = 0 $}.

From Proposition \ref{propRel1} (QR1), and
		{$	\sum_{\lambda \in \Lambda(n,r)} 2 \lambda_i 1_{\lambda, r}
		= 2 \AJRS(\bs{O}, \bs{\ep}_i, r)	$},
		it follows
\begin{align*}
	[H_{\ol{i}}, H_{\ol{j}}] &= H_{\ol{i}} H_{\ol{j}} +  H_{\ol{j}} H_{\ol{i}}
		= \sum_{r \ge 0 } h_{\ol{i},r} \sum_{r \ge 0 } h_{\ol{j},r} + \sum_{r \ge 0 } h_{\ol{j},r}  \sum_{r \ge 0 } h_{\ol{i},r} \\
		&= \sum_{r \ge 0 } (h_{\ol{i},r} h_{\ol{j},r} + h_{\ol{j},r} h_{\ol{i},r})
		=\sum_{r \ge 0 }  ( \dij 2 \AJRS(\bs{O}, \bs{\ep}_i, r)) =\dij 2 H_i.
\end{align*}
	For the relations in (QS2), as
\begin{align*}
	[H_i, E_j] &= H_i E_j - E_j H_i \\
		&=  \sum_{r \ge 0 } ( \sum_{\lambda \in \Lambda(n,r)} \lambda_i 1_\lambda ) \sum_{r \ge 0 } e_{j,r}
		   - \sum_{r \ge 0 } e_{j,r}  \sum_{r \ge 0 } ( \sum_{\lambda \in \Lambda(n,r)} \lambda_i 1_\lambda ) \\
		&= \sum_{r \ge 0 } \sum_{\lambda \in \Lambda(n,r)}
			\lambda_i 1_\lambda e_{j,r}
			-
			\sum_{r \ge 0 } \sum_{\lambda \in \Lambda(n,r)}
			e_{j,r} \lambda_i 1_\lambda
\end{align*}
(applying the relation (QR2) in Proposition \ref{propRel1}, ignore the case {$1_{\lambda, r} = 0$})
\begin{align*}
		&= \sum_{r \ge 0 } \sum_{\lambda \in \Lambda(n,r)}
			\lambda_i e_{j,r} 1_{\lambda - \alpha_j}
			-
			\sum_{r \ge 0 } \sum_{\lambda \in \Lambda(n,r)}
			\lambda_i e_{j,r}  1_\lambda  \\
		&=
\left\{
\begin{aligned}
		&\sum_{r \ge 0 } \sum_{\lambda \in \Lambda(n,r)}
			(\lambda_i+1) e_{i,r} 1_{\lambda, r}
			-
			\sum_{r \ge 0 } \sum_{\lambda \in \Lambda(n,r)}
			\lambda_i e_{i,r}  1_\lambda,
			\mbox{ if } j=i; \\
		&\sum_{r \ge 0 } \sum_{\lambda \in \Lambda(n,r)}
			(\lambda_i-1) e_{i-1,r} 1_{\lambda, r}
			-
			\sum_{r \ge 0 } \sum_{\lambda \in \Lambda(n,r)}
			\lambda_i e_{i-1,r}  1_\lambda,
			\mbox{ if } j=i-1; \\
		&\sum_{r \ge 0 } \sum_{\lambda \in \Lambda(n,r)}
			\lambda_i e_{j,r} 1_{\lambda, r}
			-
			\sum_{r \ge 0 } \sum_{\lambda \in \Lambda(n,r)}
			\lambda_i e_{j,r}  1_\lambda,
			\mbox{ others };
\end{aligned}
\right. \\
		&= (\bs{\ep}_{i}, \alpha_j) E_j.
\end{align*}
The other relations in (QS2) can be proved similarly.

For (QS4), apply  the relations in (QR4) in Proposition \ref{propRel1}, then
\begin{align*}
	[E_{\ol{i}}, F_{\ol{j}}] &=  \sum_{r \ge 0 } (e_{\ol{i},r} f_{\ol{j},r} + f_{\ol{j},r} e_{\ol{i},r}) = \sum_{r \ge 0 } (\dij \sum_{\lambda \in \Lambda(n,r)} (\lambda_i + \lambda_{i+1}) 1_{\lambda, r}) \\
		&= \dij \sum_{r \ge 0 } ( \sum_{\lambda \in \Lambda(n,r)} \lambda_i 1_{\lambda, r})
			+ \dij \sum_{r \ge 0 } (\sum_{\lambda \in \Lambda(n,r)}  \lambda_{i+1} 1_{\lambda, r}) \\
		&= \dij(H_i + H_{i+1}).
\end{align*}
Other relations in (QS4) are similar to prove.

In summary, $H_i,H_{\bar{i}},E_j,E_{\bar{j}},F_j,F_{\bar{j}}$
satisfy the relations (QS1)-(QS6).
So $\pi$ is a  superalgebra homomorphism.

For any $M \in M({\NN}|\ZG)$, denote {$\bs{j}_{M} = (\SEE{m}_{1,1}, \SEE{m}_{2,2},\cdots, \SEE{m}_{n,n})$}.
Then there is a bijection from $M({\NN}|\ZG)$
to $\LS = \{  \AJS(A, \bs{j})  \where   A \in \MNZNS(n), \bs{j} \in \NN^n \}$,
mapping {$M$} to {$\AJS({M}^{\pm}, \bs{j}_{M})$}.
According to Proposition \ref{U(n) basis},
 {$\LS$} is a basis of $\USn$,
 and \cite[Proposition 3.3]{DW2} induces that $\Uqn$ has a PBW type basis $\{{\mathfrak{m}}^A \where A\in M({\NN}|\ZG) \}$, hence $\USn$ is isomorphic to $\Uqn$ as vector spaces.
As a consequence, we have $\pi$ is an algebra isomorphism from $\Uqn$ to $\USn$.
\end{proof}

In Theorem \ref{realuqn}, we construct a realization of $\Uqn$ as the subalgebra $\USn$ of $\Qqsn$.

Referring to \cite{DW2}, there is an epimorphism from $\Uqn$ to $\Qnr$,
then another presentation of $\Qnr$ can be deduced when we treat $\Uqn$ and $\USn$ as the same one.
The quantum case is more interesting and could be found in the ongoing papers.

\section{The integral Schur-Weyl-Sergeev duality}\label{sec_sws}

Set
{$	\fsG_r = \{ \    h_{i,r}, h_{\ol{i},r}, e_{j,r}, e_{\ol{j},r}, f_{j,r}, f_{\ol{j},r}
	\where   1\leq i\leq n, 1\leq j \leq n-1
	\} $},
 and recall the surjection
 {$\bs{\rho}_r :  \USn \longrightarrow \Qnr $},
 mapping {$\AJS(A, \bs{j}) \mapsto \AJRS(A, \bs{j}, r)$},
\begin{align*}
& \bs{\rho}_r(H_i) = h_{i, r},	\qquad \bs{\rho}_r(H_{\ol{i}}) = h_{\ol{i}, r}, \\
 & \bs{\rho}_r(E_j) =  e_{j, r},	\qquad \bs{\rho}_r(E_{\ol{j}}) = e_{\ol{j}, r}, \\
 & \bs{\rho}_r(F_j) = f_{j, r},	\qquad \bs{\rho}_r(F_{\ol{j}}) = f_{\ol{j}, r}.
\end{align*}
By Theorem \ref{U(n)Alg}, $\USn$ is a superalgebra generated by  {$\fsG$},
hence {$\Qnr$} is generated by {$\fsG_r$}.

For the quantum queer Schur superalgebra {$\QqscR$},
specialize {$R=\ZZ$} and {$q=1$},   {$\QqscR$}  degenerates to {$\QqscZ$}.
Denote  {$\QqscZ$} as {$\QnrZ$} for short.
By \cite[Theorem 5.3 ]{DW1},
{$\QnrZ$} is a free {$\ZZ$}-module with the basis
\begin{align*}
	\{ \phi_{M} \where M \in \MNZ(n,r)\}
\end{align*}
as in \eqref{basis_phi}.
So  {$\QnrZ$} is a {$\ZZ$}-subalgebra of   {$\Qnr$},
and there is a superalgebra isomorphism
\begin{align*}
	\QnrZ \otimes \CC \longrightarrow \Qnr,
\end{align*}
hence {$\QnrZ$} is an integral form of   {$\Qnr$}.

For any {$X \in \fsG$} and {$m \in \NN$},
set
\begin{align*}
X^{(m)} = \frac{X^m}{m!}, \qquad
{\binom{X}{m}} = \frac{X(X-1) \cdots (X-m+1)}{m!}, \qquad
{\binom{X}{0}} = 1.
\end{align*}

For any   {$\bs{k} \in \NN^n$},  set
\begin{align*}
{\binom{\bs{h}_r} {\bs{k}}} = \prod_{i=1}^{n}\binom{h_{i, r}}{k_i}.
\end{align*}

By  \cite[Section 4]{BK2},  the Kostant {$\ZZ$}-form of $\Uqn$, denoted by $\UqnZ$, is the {$\ZZ$}-subalgebra generated by
\begin{align*}
\fsG_{\ZZ} = \left\{ \sum_{r \ge 0} \binom{\bs{h}_{r}}{\alpha_r},   h_{\ol{i}},  e^{(k)}_{j},   e_{\ol{j}},  f^{(k)}_{j},   f_{\ol{j}}
\where i\in[1,n],j\in[1,n-1], k\in \NN,  \alpha_r \in \CMN(n, r)  \right\}.
\end{align*}
The image of {$\fsG_{\ZZ}$} under the map {$\bs{\rho}_r$} is denoted as
\begin{align*}
\fsG_{r, \ZZ} = \left\{ \binom{\bs{h}_{r}}{\alpha},   h_{\ol{i}, r},
		e^{(k)}_{j, r},   e_{\ol{j}, r},  f^{(k)}_{j, r},   f_{\ol{j}, r}
	\where  i\in[1,n],j\in[1,n-1], k\in \NN, \alpha \in \CMN(n, r) \right\}.
\end{align*}


From the equations in  \eqref{devpow}, we have
\begin{align*}
	  e^{(k)}_{h, r}= \ABJR( k E_{h, h+1}, O,\bs{ 0 }, r), \qquad
	  f^{(k)}_{h, r} = \ABJR( k E_{h+1, h}, O,\bs{ 0 }, r).
\end{align*}
For any  {$A  \in \MNZ(n, r)$},
recall that  {$A^{\pm} = ( {(\SE{A})}^{\pm} |  \SO{A}) \in \MNZNS(n)$}.
  The following corollary is derived from acting $\rho_r$ on the both sides of the triangular multiplication formula in Theorem \ref{triang2}.
\begin{cor}\label{cor_triang2}
	Suppose {$A=(\SUP{A})  \in \MNZ(n,r)$}, then
\begin{align*}
& \prod_{(i,j) \in \fsP^- }^{<}  \!\!
		{( f_{\ol{i-1}, r} )} ^{\SOE{a}_{i,j}} f^{(\SEE{a}_{i,j} )}_{i-1, r}
	\prod_{h=i-2}^{j} f^{({a}_{i,j} )}_{h, r}    \!\!
			\prod_{(i,i) \in \fsP^0 }^{<}  \!\!
			{( h_{\ol{i}, r} )}^{\SOE{a}_{i,i} }
	   \prod_{(i,j) \in \fsP^+ }^{<}  \!\!
			{( e_{\ol{i}, r} )} ^{\SOE{a}_{i,j}} e^{(\SEE{a}_{i,j} )}_{i, r}
			  \prod_{h=i+ 1}^{j-1}   \!\!  e^{({a}_{i,j} )}_{h, r}    \\
	&= \pm\AJRS(A^{\pm}, \bs{0}, r)
	+
	\sum_{ \substack{
	 				B \in \MNZNS(n), \\
	 				B \prec A^{\pm}, \\
	 				\bs{j}  \in {\NN}^n
		} } g_{B, \bs{j} }\AJRS(B, \bs{j}, r),
\end{align*}
	where all {$g_{B, \bs{j}}  \in \QQ$}.
\end{cor}

To calculate {$ {\binom{\bs{h}_r} {\bs{k}}} \AJRS(A, \bs{j},  r)$},
we need the following lemma.
\begin{lem}\label{binorm_mul}
For any  {$\bs{k} \in \NN^n$} with {$\snorm{\bs{k}} \le r$}, we have
\begin{align*}
{\binom{\bs{h}_r} {\bs{k}}}
= {\binom{\bs{h}_r} {\bs{k}}}   \AJRS(\bs{O}, \bs{0},  r)
=  \AJRS(\bs{O}, \bs{0},  r)  {\binom{\bs{h}_r} {\bs{k}}}
&=
	 \sum_{\substack{ \lambda \in \CMN(n,r) \\ {\lambda}  \ge \bs{k} } }
	\binom{\lambda}{\bs{k}}
	\phi_{ (\lambda | O )},
\end{align*}
where {${\lambda}  \ge  \bs{k}$} means {${\lambda}_i  \ge  {k}_i$} for all {$i$}.

\end{lem}
\begin{proof}
According to Proposition \ref{mulformajrb1}(i), we have
$${\binom{\bs{h}_r} {\bs{k}}}
= {\binom{\bs{h}_r} {\bs{k}}}   \AJRS(\bs{O}, \bs{0},  r)
=  \AJRS(\bs{O}, \bs{0},  r)  {\binom{\bs{h}_r} {\bs{k}}}.$$

Now we begin to prove the last equation.
For {$a \in \NN $}, direct calculation shows
\begin{align*}
 \AJRS(\bs{O}, \bs{0},  r) ( h_{i,r} - {a} )
&=  \AJRS(\bs{O}, \bs{0},  r)  \AJRS(\bs{O}, \bs{\ep}_i,  r)    - {a} \cdot  \AJRS(\bs{O}, \bs{0},  r)  \\
&=
	\sum_{\substack{ \lambda \in \CMN(n,r) } }
	\lambda ^{\bs{0}}
	\phi_{(\lambda | O)}
	\sum_{\substack{ \mu \in \CMN(n, r) } }
	\mu ^{\bs{\ep}_i}
	\phi_{(\mu | O )}
	- a \cdot \sum_{\substack{ \lambda \in \CMN(n, r) } }
	\lambda ^{\bs{0}}
	\phi_{( \lambda | O )} \\
& =\sum_{\substack{ \\ \lambda \in  \CMN(n, r) } }
		( {\lambda}_i - a)
	\phi_{ (\lambda | O )} .
\end{align*}
Notice that
\begin{align*}
{\lambda}_{i} ({\lambda}_{i} - 1)  \cdots ({\lambda}_{i} - a + 1)  = 0
\mbox{ when }
0 \le \lambda_i \le a - 1,
\end{align*}
it follows
\begin{align*}
 \AJRS(\bs{O}, \bs{0},  r) \binom{h_{i, r}}  {a}
&=
	\AJRS(\bs{O}, \bs{0},  r)
	\frac{ h_{i, r} ( h_{i, r}  - 1)  \cdots ( h_{i, r}- a + 1)}{ {a}! } \\
&=
	\frac{1}{ {a}! } \cdot
	\sum_{\substack{ \\ \lambda \in  \CMN(n, r) } }
		( {\lambda}_i - a + 1) \phi_{ (\lambda | O )}  	
	h_{i, r} ( h_{i, r}  - 1)  \cdots ( h_{i, r} - a + 2) \\
&=  \sum_{\substack{ \lambda \in \CMN(n,r) } }
	\frac{{\lambda}_{i} ({\lambda}_{i} - 1)  \cdots ({\lambda}_{i} - a + 1)}{ {a}! }
	\cdot \phi_{ (\lambda | O )} \\
&=  \sum_{\substack{ \lambda \in \CMN(n,r) \\ {\lambda}_i \ge a } }
	\frac{{\lambda}_{i} ({\lambda}_{i} - 1)  \cdots ({\lambda}_{i} - a + 1)}{ {a}! }
	\cdot \phi_{ (\lambda | O )},
\end{align*}
hence we have
\begin{align*}
 \AJRS(\bs{O}, \bs{0},  r)  {\binom{\bs{h}_r} {\bs{k}}}
&=
	\prod_{i=1}^{n}  \AJRS(\bs{O}, \bs{0},  r)  { \binom{h_{i, r}} {k_{i}} }   \\
&=
	\prod_{i=1}^{n-1}  { \binom{h_{i, r}} {k_{i}} }
	\cdot
	  \sum_{\substack{ \lambda \in \CMN(n,r ) \\ {\lambda}_n \ge k_n } }
	\frac{{\lambda}_{n} ({\lambda}_{n} - 1)  \cdots ({\lambda}_{n} - k_n + 1)}{ {k_n}! }
	\cdot \phi_{ (\lambda | O )} \\
&=
	 \sum_{\substack{ \lambda \in \CMN(n,r ) \\ {\lambda}_j \ge k_j \mbox{ for all } j } }
	\prod_{i=1}^{n}
	\frac{{\lambda}_{i} ({\lambda}_{i} - 1)  \cdots ({\lambda}_{i} - k_i + 1)}{ {k_i}! }
	\cdot \phi_{ (\lambda | O )} \\
&=
	 \sum_{\substack{ \lambda \in \CMN(n,r ) \\ {\lambda}  \ge \bs{k} } }
	\prod_{i=1}^{n}
	\frac{{\lambda}_{i} ({\lambda}_{i} - 1)  \cdots ({\lambda}_{i} - k_i + 1)}{ {k_i}! }
	\cdot \phi_{ (\lambda | O )} \\
&=
	 \sum_{\substack{ \lambda \in \CMN(n,r ) \\ {\lambda}  \ge \bs{k} } }
	\prod_{i=1}^{n}
	\binom{\lambda_i}{k_i}
	\phi_{ (\lambda | O )} =
	 \sum_{\substack{ \lambda \in \CMN(n,r) \\ {\lambda}  \ge \bs{k} } }
	\binom{\lambda}{\bs{k}}
	\phi_{ (\lambda | O )} .
\end{align*}
\end{proof}

\begin{cor}\label{cor_binorm_mul}
For any  {$ \alpha \in \CMN(n, r)$}, we have
\begin{align*}
 {\binom{\bs{h}_r} {\alpha}}   =	\phi_{ (\alpha | O )}.
\end{align*}
\end{cor}
\begin{proof}
For {$ \lambda \in \CMN(n,r) $}, {$ {\lambda}  \ge \alpha $} implies
{$ {\lambda} = \alpha $}.
Lemma \ref{binorm_mul} causes
\begin{align*}
{\binom{\bs{h}_r} { \alpha }}
=
	 \sum_{\substack{ \lambda \in \CMN(n,r-\snorm{M}) \\ {\lambda}  \ge \alpha } }
	\prod_{i=1}^{n}
	\binom{\lambda_i}{{\alpha}_i}
	\phi_{ (\lambda | O )}
=
	\prod_{i=1}^{n}
	\binom{\alpha_i}{{\alpha}_i}
	\phi_{ (\alpha | O )}
=
	\phi_{ (\alpha | O )}.
\end{align*}
\end{proof}

Applying  Theorem \ref{formD0} and Corollarly \ref{cor_binorm_mul}, it yields
the following assertions.
\begin{cor}\label{cor_mu3}
For any  {$ A \in \MNZNS(n)$},
 given {$\mu \in \CMN(n, r)$}, we have
\begin{align*}
{\rm(i)}  \qquad & {\binom{\bs{h}_r} {\mu}} \AJRS(A, \bs{0},  r)
 =  \phi_{B},  \mbox{ where } B^{\pm} = A \mbox{ and } \ro(B) = \mu ;\\
{\rm(ii)}  \qquad &  \AJRS(A, \bs{0},  r)  {\binom{\bs{h}_r} {\mu}}
 =  \phi_{C},  \mbox{ where } C^{\pm} = A \mbox{ and } \co(C) = \mu .
\end{align*}

\end{cor}

For any partition {$\mu \in \CMN(n, k)$}, denote
{$\mu ! = \prod_{i=1}^{k} (\mu_i !)$}, set  {$0!=1$}.

For any $k\in \mathbb N$, and $A \in \MNZ(n, r)$, set
\begin{align*}
\Gamma(A,k,h)
	=	\{(\nu^0,\nu^1) &  \in \Lambda(n,k_1)
		\times \Lambda(n,k_2)
		\where
		k_1 + k_2 = k,  \\
		& \qquad
		\SEE{a}_{h, i} \ge \nu^0_i,
		\SOE{a}_{h, i} \ge \nu^1_i ,
		\mbox{ for all } i \in [1,n]
		\}.
\end{align*}

\begin{prop}\label{ULk-A}
Assume $A \in \MNZ(n, r)$, $k\in \mathbb{N}$. \\
{\rm(i)} If  $k\leq \ro(A)_{h+1}$,  we have
\begin{align*}
& e_{h, r}^{k} \phi_A
	= k! \sum_{(\nu^0,\nu^1)\in \Gamma(A,k,h+1)}
	\prod_{\substack{p=1 \\ \nu^0_p >0}}^n
	{\binom{\SEE{a}_{h,p}+\nu^0_p} {\nu^0_p}}
	\phi_{A^+_{\nu}}
\end{align*}
where $A^+_{\nu} = (\SE{A}+\sum_{p=1}^n\nu^0_p(E_{h,p}-E_{h+1,p})|\SO{A}+\sum_{p=1}^n\nu^1_p(E_{h,p}-E_{h+1,p}))\in \MNZ(n, r)$.\\
{\rm(ii)} If  $k\leq \ro(A)_h$, we have
\begin{align*}
&  f_{h, r}^{k}  \phi_A
	= k! \sum_{(\nu^0,\nu^1)\in \Gamma(A,k,h)}
	\prod_{\substack{p=1 \\ \nu^0_p >0}}^n
	{\binom{\SEE{a}_{h+1,p}+\nu^0_p} {\nu^0_p}}
	\phi_{A^-_{\nu}}
\end{align*}
where $A^-_{\nu}
		= (\SE{A} + \sum_{p=1}^n\nu^0_p( E_{h+1,p}-E_{h,p} ) |
			 \SO{A}+  \sum_{p=1}^n\nu^1_p( E_{h+1,p}- E_{h,p}))
\in \MNZ(n, r)$.
\end{prop}
\begin{proof}
Let $\lambda=\ro(A)$.
Since
\begin{align*}
&e_{h, r} \phi_A
=  \sum_{\substack{\mu \in \CMN(n, r-1)} }
			\mu ^{\bs{j}} \phi_{ (E_{h, h+1}+ \lambda | O) }
	\cdot \phi_A
=  \phi_{ (\lambda+E_{h, h+1} - E_{h+1, h+1}| O) }
	\cdot \phi_A, \\
&f_{h, r} \phi_A
=  \sum_{\substack{\mu \in \CMN(n, r-1)} }
			\mu ^{\bs{j}} \phi_{ (E_{h+1, h}+ \lambda | O) }
	\cdot \phi_A
=  \phi_{ (\lambda+E_{h+1, h} - E_{h, h}| O) }
	\cdot \phi_A.
\end{align*}
Applying  Theorem \ref{upper0} and  \ref{L0A}  for $k$ times respectively,
one obtains
\begin{align*}
e_{h, r}^{k} \phi_A
&= \sum_{(\nu^0,\nu^1)\in \Gamma(A,k,h+1)}
		\frac{k!}{\nu^0 !}
		\prod_{\substack{p=1 \\ \nu^0_p >0}}^n  { (\SEE{a}_{h,p}+\nu^0_p) }
	\phi_{A^+_{\nu}} \\
&= k! \sum_{(\nu^0,\nu^1)\in \Gamma(A,k,h+1)}
	\prod_{\substack{p=1 \\ \nu^0_p >0}}^n
	{\binom{\SEE{a}_{h,p}+\nu^0_p} {\nu^0_p}}
	\phi_{A^+_{\nu}},\\
f_{h, r}^{k}  \phi_A
&= \sum_{(\nu^0,\nu^1)\in \Gamma(A,k,h)}
	\frac{ k! }{ \nu^0 !}
	\prod_{\substack{p=1 \\ \nu^0_p >0}}^n
	{ (\SEE{a}_{h+1,p}+\nu^0_p )}
	\phi_{A^-_{\nu}} \\
&= k! \sum_{(\nu^0,\nu^1)\in \Gamma(A,k,h)}
	\prod_{\substack{p=1 \\ \nu^0_p >0}}^n
	{\binom{\SEE{a}_{h+1,p}+\nu^0_p} {\nu^0_p}}
	\phi_{A^-_{\nu}} .
\end{align*}
\end{proof}

\begin{lem}\label{inte_coeff}
For any {$A \in \MNZ(n, r)$}, we have
\begin{align*}
e_{h, r}^{(k)} \phi_A
= \sum_{\substack{
	 		M \in \MNZ(n, r)
	} } g'_{A, M} \phi_{M}, \qquad
f_{h, r}^{(k)} \phi_A
= \sum_{\substack{
	 		M \in \MNZ(n, r)
	} }g''_{A, M} \phi_{M},
\end{align*}
where  {$g'_{A, M}$},  {$g''_{A, M} \in \ZZ$}.
\end{lem}
\begin{proof}
Referring to the equations in \eqref{devpow}, and applying Proposition \ref{ULk-A},
the result is then proved.
\end{proof}

%
%
%
\begin{thm}\label{iphi_gen}
	Suppose {$A=(\SUP{A})  \in \MNZ(n,r)$}, {$\mu = \co(A)$}, then
\small
\begin{align*}
	&
	\prod_{(i,j) \in \fsP^- }^{<} \!
		{( f_{\ol{i-1}, r} )}^{\SOE{a}_{i,j}}  \! f^{(\SEE{a}_{i,j} )}_{i-1, r}  \!
		 \prod_{h=i-2}^{j}  \! f^{({a}_{i,j} )}_{h, r}    \!
	\prod_{(i,i) \in \fsP^0 }^{<}		\!
			{( h_{\ol{i}, r} )}^{\SOE{a}_{i,i} }   \!
	\prod_{(i,j) \in \fsP^+ }^{<}  \!
			{( e_{\ol{i}, r} )} ^{\SOE{a}_{i,j}} e^{(\SEE{a}_{i,j} )}_{i, r}  \!
			  \prod_{h=i+ 1}^{j-1}   \! e^{({a}_{i,j} )}_{h, r}    \!
	\binom{  \bs{h}_{r} }{\mu}
	\\
	&= \pm\phi_{A}
	+
		\sum_{\substack{
	 				B \in \MNZ(n, r) \\
	 				B \prec  A
		} } 	g_{B}  \phi_{ B } .
\end{align*}
\normalsize
	where all {$g_{B}  \in \ZZ$}.
\end{thm}
\begin{proof}
From Corollary \ref{cor_triang2} and Lemma \ref{binorm_mul},
\small
\begin{align*}
&
	\prod_{(i,j) \in \fsP^- }^{<}   \!
		{( f_{\ol{i-1}, r} )}^{\SOE{a}_{i,j}}  \! f^{(\SEE{a}_{i,j} )}_{i-1, r}   \!
		 \prod_{h=i-2}^{j}  \! f^{({a}_{i,j} )}_{h, r}    \!
	  \prod_{(i,i) \in \fsP^0 }^{<}   \!
			{( h_{\ol{i}, r} )}^{\SOE{a}_{i,i} }   \!
	  \prod_{(i,j) \in \fsP^+ }^{<}   \!
			{( e_{\ol{i}, r} )}^{\SOE{a}_{i,j}}  \! e^{(\SEE{a}_{i,j} )}_{i, r}   \!
			  \prod_{h=i+ 1}^{j-1}   \! e^{({a}_{i,j} )}_{h, r}      \!
	 \binom{  \bs{h}_{r} }{\mu}	
		\nonumber \\
&=		
		 \pm  \AJS(A^{\pm}, \bs{0}) \cdot \phi_{ (\mu | O )}
	+
	\sum_{ \substack{
	 				B \in \MNZNS(n)  \\
	 				B \prec A^{\pm},
					\bs{j}  \in {\NN}^n
		} } g'_{B, \bs{j} } \AJS(B, \bs{j}) \cdot  \phi_{ (\mu | O )}  \\
&=		 \pm \phi_{ A }
	+
	\sum_{ \substack{
	 				B \in \MNZ(n, r)  \\
	 				B \prec A
		} } g_{B} \phi_{ B} .
\end{align*}
\normalsize
The coefficients {$g'_{B, \bs{j} } \in \QQ$} cause  {$g_{B} \in \QQ$}.
Lemma \ref{inte_coeff} ensures each {$g_{B} \in \ZZ$}.
%
%
%
\end{proof}
With Theorem \ref{iphi_gen}, the following two corollaries are natural consequences.
\begin{cor}
The superalgebra {$\QnrZ$} is a {$\ZZ$}-subalgebra of {$\Qnr$} generated by {$\fsG_{r, \ZZ} $},
where
\begin{align*}
\fsG_{r, \ZZ} = \left\{ \binom{h_{i, r}}{\alpha},   h_{\ol{i}, r},
		e^{(k)}_{j, r},   e_{\ol{j}, r},  f^{(k)}_{j, r},   f_{\ol{j}, r} \where
	 i\in[1,n], j\in[1,n-1], k\in \NN, \alpha \in \CMN(n, r)  \right\}.
\end{align*}
\end{cor}
\begin{proof}
Theorem \ref{iphi_gen} shows {$\{ \phi_{M} \where M \in \MNZ(n,r)\} $}
can be generated from {$\fsG_{r, \ZZ}$}.
The formulas in Proposition \ref{mulformajrb1}, \ref{mulformajrb2} and \ref{mulformajrb3}
show the products of  elements in  {$\fsG_{r, \ZZ}$} are still  in {$\QnrZ$}.
So   {$\QnrZ$} is the {$\ZZ$}-algebra generated by {$\fsG_{r, \ZZ}$}.
\end{proof}

\begin{cor}
Let {$\rho_r$} be the restriction of {$\bs{\rho}_r$} to {$\UqnZ$}, then
we have {$\rho_r(\UqnZ) = \QnrZ$}.
More precisely, {$\rho_r$} is a  {$\ZZ$}-algebra surjection {$\UqnZ \twoheadrightarrow \QnrZ$},
mapping
\begin{align*}
& \sum_{r \ge 0} \binom{\bs{h}_{r}}{\alpha_r} \mapsto  \binom{\bs{h}_{r}}{\alpha_r},  \qquad
& h_{\ol{i}} \mapsto  h_{\ol{i}, r}, &\  \\
&e^{(k)}_{j} \mapsto e^{(k)}_{j, r},  \qquad
 & e_{\ol{j}} \mapsto   e_{\ol{j}, r}, &\  \\
&  f^{(k)}_{j} \mapsto   f^{(k)}_{j, r}, \qquad
 & f_{\ol{j}}   \mapsto   f_{\ol{j}, r} .& \
\end{align*}
\end{cor}

Let {$\SerZ$} be the {$\ZZ$}-subalgebra of {$\SerA$} generated by {$\{ s_j, c_i \where 1 \le i \le n, 1 \le j \le n-1\}$}.

Let {$V = \CC^{n|n}$},  {$ \{ {\omega}_1,  \cdots, {\omega}_n, {\omega}_{\ol{1}}, \cdots, {\omega}_{\ol{n}} \}$} be its {$\CC$}-basis.
Let {$\VZ$} be the {$\ZZ$}-subspace of {$V$} with  the basis above.
The  {$({\qn},{\SerA})$}-bimodule structure of {$V^{\otimes r}$} is showed  in \cite{JK}.
Notice the actions of {$g \in \qn$}, {$c_i, s_j$} on the basis of {$V^{\otimes r}$},
which means {$\VZ^{\otimes r}$} is a  {$({\UqnZ},{\SerZ})$}-bimodule.
Analogous to the proof of  \cite[Section 1, Theorem 1.2 ]{JK} and \cite[Theorem 3,4]{Ser},
one proves  {${\End_{\UqnZ}(\VZ^{\otimes r})}^{op}   = \SerZ$} when {$n \ge r$}.

By \cite[Corollary 6.3]{DW1},  for the {$V_{R}$} and the commutative ring {$R$},
\begin{align*}
\End_{\HCR}(V_{R}^{\otimes r}) = \QqscR.
\end{align*}
Set {$R = \ZZ$} and {$q = 1$}, we have
\begin{align*}
& \End_{\SerZ}(\VZ^{\otimes r})   = \QnrZ.
\end{align*}

Hence we get the main result of this paper:
\begin{thm}
(The integral Schur-Weyl-Sergeev duality)
The superspace
{$\VZ^{\otimes r}$} has a {$({\UqnZ},{\SerZ})$}-bimodule structure, and\\
{\rm (i)} \ \    {$\End_{\SerZ}(\VZ^{\otimes r})   = \rho_r(\UqnZ)$}; \\
{\rm (ii)} \   {${\End_{\UqnZ}(\VZ^{\otimes r})}^{op}   = \SerZ$} when {$n \ge r$}.
\end{thm}

\vspace{1cm}
{\it Acknowledgement}: 
The first author gratefully acknowledges the support from the Natural Science Foundation of China under Grant No.11671234 and No.12071129. The second and third authors gratefully acknowledge support from the Natural Science Foundation of China under Grant No.11871404 and No.11971398.

\vspace{1cm}

\end{document}